
\documentclass[final,1p,times]{elsarticle}%
\usepackage{amsthm}
\usepackage{amssymb}
\usepackage{amsmath}
\usepackage{amsfonts}
\usepackage{graphicx}%
\setcounter{MaxMatrixCols}{30}
\providecommand{\U}[1]{\protect\rule{.1in}{.1in}}
\overfullrule=5pt
\newtheorem{proposition}{Proposition}
\newtheorem{theorem}{Theorem}
\newtheorem{corollary}{Corollary}
\newtheorem{lemma}{Lemma}
\newtheorem{remark}{Remark}
\journal{}
\begin{document}
\begin{frontmatter}
\title{Extension of the G\"{u}nter derivatives to Lipschitz domains and application
to the boundary potentials of elastic waves}
\author[INSA]{A.~Bendali\corref{cor_author}}
\ead{abendali@insa-toulouse.fr}
\author[UPPA,M3D]{S.~Tordeux}
\ead{sebastien.tordeux@inria.fr}
\cortext[cor_author]{Corresponding author}
\address[INSA]{Universit\'e de Toulouse, Institut Math\'ematique de Toulouse,\\
D\'epartement de G\'enie Math\'ematique, INSA, Toulouse, France}
\address[UPPA]{University of Pau (France)}
\address[M3D]{INRIA Bordeaux Sud-Ouest Magique-$3$D Team, Pau (France)}
\begin{abstract}
The scalar G\"{u}nter derivatives of a function defined on
the boundary of a three-dimensional domain are expressed as components (or their opposites)
of the tangential vector rotational of this function in the canonical orthonormal
basis of the ambient space. This in particular implies that these derivatives
define bounded operators from $H^s$ into $H^{s-1}$ for $0 \le s \le 1$ on the boundary
of a Lipschitz domain, and can easily be implemented in boundary element codes.
Regularization techniques
for the trace and the traction of elastic waves potentials, previously built
for a domain of class $\mathcal{C}^2$, can thus be extended to the Lipschitz case.
In particular, this yields an elementary way  to establish
the mapping properties of elastic wave potentials
from those of the Helmholtz equation without resorting to the more
advanced theory for elliptic systems. Some attention is finally paid to the two-dimensional
case.
\end{abstract}
\begin{keyword}
Boundary integral operators \sep G\"unter derivatives \sep Elastic Waves \sep Layer potentials
\sep Lipschitz domains
\PACS 02.30.Rz \sep 02.30.Tb \sep 02.60.Nm \sep 02.70.Pt
\MSC 35A08 \sep 45E05 \sep 47G20 \sep 74J05
\end{keyword}
\end{frontmatter}

\section{Introduction}

All along this paper, $\Omega^{+}$ and $\Omega^{-}=\mathbb{R}^{3}%
\smallsetminus\overline{\Omega^{+}}$ respectively designate a bounded
Lipschitz domain of $\mathbb{R}^{3}$, and its exterior. As a result,
$\Omega^{+}$ and $\Omega^{-}$ share a common boundary denoted by
$\partial\Omega$. It is well-known that $\partial\Omega$ is endowed with a
Lebesgue surface measure $s$, and that it has an unit normal $\boldsymbol{n}$
(see figure~\ref{fig_dom}), defined $s$-almost everywhere, pointing outward
from $\Omega^{+}$ (cf., for example, \cite[p. 96]{McLean:00}). Vectors with
three components $a_{j}$ $\left(  j=1,2,3\right)  $, either real or complex,
are identified to column-vectors
\[
\boldsymbol{a}=\left[
\begin{array}
[c]{c}%
a_{1}\\
a_{2}\\
a_{3}%
\end{array}
\right]  .
\]
The bilinear form underlying the scalar product of two such vectors
$\boldsymbol{a}$ and $\boldsymbol{b}$ is given by
\[
\boldsymbol{a\cdot b}=\boldsymbol{a}^{\top}\boldsymbol{b}=\boldsymbol{b}%
^{\top}\boldsymbol{a=}\sum_{j=1}^{3}a_{j}b_{j}%
\]
where $\boldsymbol{a}^{\top}$ is the transpose of $\boldsymbol{a}$.%

\begin{figure}[pbh]%
\centering
\includegraphics[
height=6.1352cm,
width=5.6739cm
]%
{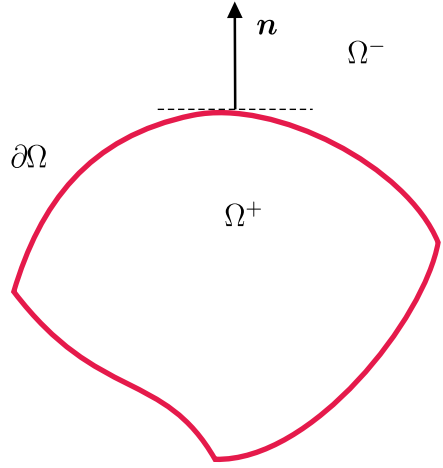}%
\caption{Schematic view of the geometry}%
\label{fig_dom}%
\end{figure}

Usual notation in the theory of Partial Differential Equations
\cite{Taylor:96} will be used without further comment. We just mention that we
make use of the following Fr\'{e}chet spaces, defined for any integer $m\geq0$
by%
\[
H_{\text{loc}}^{m}\left(  \mathbb{R}^{3}\right)  =\left\{  v\in\mathcal{D}%
^{\prime}\left(  \mathbb{R}^{3}\right)  ;\;\varphi v\in H^{m}\left(
\mathbb{R}^{3}\right)  ,\;\forall\varphi\in\mathcal{D}\left(  \mathbb{R}%
^{3}\right)  \right\}
\]

\[
H_{\text{loc}}^{m}\left(  \overline{\Omega^{-}}\right)  :=\left\{
v\in\mathcal{D}^{\prime}\left(  \Omega^{-}\right)  ;\;\exists V\in
H_{\text{loc}}^{m}\left(  \mathbb{R}^{3}\right)  ,\;v=V|_{\Omega^{-}}\right\}
,
\]
and by
\[
H_{\text{comp}}^{m}\left(  \mathbb{R}^{3}\right)  =\left\{  v\in H^{m}\left(
\mathbb{R}^{3}\right)  ;\;\exists R>0,\;v|_{\left\vert x\right\vert \geq
R}=0\right\}
\]%
\[
H_{\text{comp}}^{m}\left(  \overline{\Omega^{-}}\right)  =\left\{  v\in
H^{m}\left(  \Omega^{-}\right)  ;\;\exists V\in H_{\text{comp}}^{m}\left(
\mathbb{R}^{3}\right)  ,\;v=V|_{\Omega^{-}}\right\}  .
\]
With similar definitions, it is trivially true that $H_{\text{loc}}^{m}\left(
\overline{\Omega^{+}}\right)  =H_{\text{comp}}^{m}\left(  \overline{\Omega
^{+}}\right)  =H^{m}\left(  \Omega^{+}\right)  $. Below, we conveniently use
the unified notation $H_{\text{loc}}^{m}\left(  \overline{\Omega^{\pm}%
}\right)  $ and $H_{\text{comp}}^{m}\left(  \overline{\Omega^{\pm}}\right)  $
to refer to both of these spaces. Instead of $H^{0}$, we use the more
conventional notation $L^{2}$.

We denote by $u^{+}=\left(  u|_{\Omega^{+}}\right)  |_{\partial\Omega}$ (resp.
$u^{-}=\left(  u|_{\Omega^{-}}\right)  |_{\partial\Omega}$) the trace of $u$
on $\partial\Omega$ from the values $u|_{\Omega^{+}}$ of $u$ in $\Omega^{+}$
(resp. $u|_{\Omega^{-}}$ in $\Omega^{-}$). For simplicity, we omit to
explicitly mention the trace when the related function has zero jump across
$\partial\Omega$. We also adopt a classical way to denote functional spaces of
vector fields having their components in some scalar functional space. For
example, $H^{s}\left(  \partial\Omega;\mathbb{C}^{3}\right)  $ stands for the
space of vector fields $\boldsymbol{u}$ whose components $u_{j}$ $\left(
j=1,2,3\right)  $ are in $H^{s}\left(  \partial\Omega\right)  $.

For $1\leq i,j\leq3$ and $u\in H_{\text{loc}}^{2}\left(  \mathbb{R}%
^{3}\right)  $, the G\"{u}nter derivative
\begin{equation}
\mathcal{M}_{ij}^{\left(  \boldsymbol{n}\right)  }u=n_{j}\partial_{x_{i}%
}u-n_{i}\partial_{x_{j}}u \label{Mij}%
\end{equation}
is well-defined as a function in $L^{2}\left(  \partial\Omega\right)  $ since
the traces of $\partial_{x_{i}}u$ and $\partial_{x_{j}}u$ are in
$H^{1/2}\left(  \partial\Omega\right)  $ and the components $n_{i}$ and
$n_{j}$ of the normal $\boldsymbol{n}$ to $\partial\Omega$ are in $L^{\infty
}\left(  \partial\Omega\right)  $. It is worth recalling that if $\Omega^{+}$
is a bit more regular, say a $\mathcal{C}^{1,1}$-domain for example (cf.,
\cite[p. 90]{McLean:00} for the definition of a $\mathcal{C}^{k}$-domain
(resp. $\mathcal{C}^{k,\alpha}$-domain), also referred to as a domain of class
$\mathcal{C}^{k}$ (resp. $\mathcal{C}^{k,\alpha}$)), $\mathcal{M}%
_{ij}^{\left(  \boldsymbol{n}\right)  }u$ is in $H^{1/2}\left(  \partial
\Omega\right)  $. Seemingly, there is a loss of one-half order of regularity
when considering a domain which is only Lipschitz. The purpose of this paper
is precisely to show that this one-half order of regularity can be restored
for functions in lower order Sobolev spaces.

Let us first recall some well-established properties of the G\"{u}nter
derivatives when $\Omega^{+}$ is at least a $\mathcal{C}^{1,1}$-domain. Let
\[
\left\{  \boldsymbol{e}_{j}=\left[  \delta_{1j},\delta_{2j},\delta
_{3j}\right]  ^{\top}\right\}  _{j=1}^{3}\text{ (}\delta_{ij}=1\text{ if
}i=j\text{ and }0\text{ otherwise)}%
\]
be the canonical basis of $\mathbb{R}^{3}$ so that $n_{j}=\boldsymbol{n}%
\cdot\boldsymbol{e}_{j}$ for $j=1,2,3$. Define for $1\leq i\neq j\leq3$%
\begin{equation}
\boldsymbol{\tau}_{ij}=n_{j}\boldsymbol{e}_{i}-n_{i}\boldsymbol{e}_{j}.
\label{Tau_ij}%
\end{equation}
Clearly%
\[
\mathcal{M}_{ij}^{\left(  \boldsymbol{n}\right)  }u=\boldsymbol{\nabla}%
u\cdot\boldsymbol{\tau}_{ij}=\partial_{\boldsymbol{\tau}_{ij}}u
\]
with
\begin{equation}
\boldsymbol{\tau}_{ij}\cdot\boldsymbol{n}=0. \label{Tau_ij.n}%
\end{equation}
As a result, $\mathcal{M}_{ij}^{\left(  \boldsymbol{n}\right)  }$ is a
tangential derivative on $\partial\Omega$, meaning in particular, at least for
$u\in\mathcal{C}^{1}\left(  \mathbb{R}^{3}\right)  $ and $\Omega^{+}$ a
$\mathcal{C}^{1}$-domain, that $\mathcal{M}_{ij}^{\left(  \boldsymbol{n}%
\right)  }u$ can be calculated without resorting to interior values of $u$ in
$\Omega^{+}$ or in $\Omega^{-}$.

These operators were introduced by G\"{u}nter \cite{Gunter:53}. It was
discovered later \cite{Kupradze:79} that they can be used for bringing out
important relations linking the boundary layer potentials of the Lam\'{e}
system to those of the Laplace equation (see \cite[p. 314]{Kupradze:79} and
\cite[p. 48]{Hsiao-Wendland:08}). They were then employed to more conveniently
express the traction of the double layer elastic potential (see \cite{Han:94}
and \cite[p. 49]{Hsiao-Wendland:08}). These approaches were recently extended
to the elastic wave boundary layer potentials by Le\ Lou\"{e}r
\cite{LeLouer:14,Lelouerv1:12}. All these results were derived under the
assumption that $\Omega^{+}$ is a $\mathcal{C}^{2}$-domain (actually,
$\mathcal{C}^{1,1}$- is enough). It is the aim of this paper, by defining the
G\"{u}nter derivatives for a Lipschitz domain, that is, a $\mathcal{C}^{0,1}%
$-domain, to similarly handle geometries more usual in the applications. More
importantly, it is possible in this way to deal with boundary element
approximations of the traction of single- and double-layer potentials of
Lam\'{e} static elasticity and elastic wave systems almost as easily as for
the Laplace or the Helmholtz equation.

Actually, in connection with elasticity potential layers, G\"{u}nter
derivatives are involved as entries $\mathcal{M}_{ij}^{\left(  \boldsymbol{n}%
\right)  }$ $\left(  i,j=1,2,3\right)  $ of the skew-symmetric matrix
$\mathcal{M}^{\left(  \boldsymbol{n}\right)  }$ acting on vector-valued
functions $\boldsymbol{u}$%
\[
\left(  \mathcal{M}^{\left(  \boldsymbol{n}\right)  }\boldsymbol{u}\right)
_{i}=\sum_{j=1}^{3}\mathcal{M}_{ij}^{\left(  \boldsymbol{n}\right)  }%
u_{j}\quad\left(  i=1,2,3\right)  ,
\]
$\left(  \mathcal{M}^{\left(  \boldsymbol{n}\right)  }\boldsymbol{u}\right)
_{i}$ and $u_{j}$ $\left(  j=1,2,3\right)  $ being the respective components
of $\mathcal{M}^{\left(  \boldsymbol{n}\right)  }\boldsymbol{u}$ and
$\boldsymbol{u}$. In \cite{Hsiao-Wendland:08}, this matrix is called the
G\"{u}nter derivatives in matrix form. We find it more convenient to refer to
$\mathcal{M}^{\left(  \boldsymbol{n}\right)  }\boldsymbol{u}$ as the
G\"{u}nter derivative matrix.

The G\"{u}nter derivative matrix actually give rise to a multi-faceted
operator, with various expressions, which led to real progresses in the
context of Lam\'{e} static elasticity boundary layer potentials
\cite{Kupradze:79,Hsiao-Wendland:08,LeLouer:14,Han:94} or in the design of
preconditioning techniques for the boundary integral formulations in the
scattering of elastic waves \cite{Darbas-Lelouer:15}. Other ways to write
$\mathcal{M}^{\left(  \boldsymbol{n}\right)  }$\ do not seem to have been
connected with the G\"{u}nter derivatives \cite{Costabel:91,Costabel-Dauge:96}%
. However, all these expressions require either interior values, as for
example for above direct definition (\ref{Mij}) of $\mathcal{M}_{ij}^{\left(
\boldsymbol{n}\right)  }$, or curvature terms of $\partial\Omega$ as recalled
below, making problematic their effective implementation in boundary element
codes or in a preconditioning technique. It is among the objectives of this
paper to address this issue.

The outline of the paper is as follows. In section \ref{section1}, we first
show that $\mathcal{M}_{ij}^{\left(  \boldsymbol{n}\right)  }u$ corresponds to
a component (or its opposite) of the tangential vector rotational
$\boldsymbol{\nabla}_{\partial\Omega}u\times\boldsymbol{n}$ of $u$ in an
orthonormal basis of the ambient space. This feature, in addition to some
duality properties, enable us to define this derivative as a bounded operator
from $H^{s}\left(  \partial\Omega\right)  $ into $H^{s-1}\left(
\partial\Omega\right)  $ for $0\leq s\leq1$. This is actually equivalent to
arguing that $u\rightarrow\boldsymbol{\nabla}_{\partial\Omega}u\times
\boldsymbol{n}$ is a bounded operator from $H^{s}\left(  \partial
\Omega\right)  $ into $H^{s-1}\left(  \partial\Omega;\mathbb{C}^{3}\right)  $
for $0\leq s\leq1$, a result which was established for $s=1/2$ in
\cite{Buffa-et-al:02} from a different technique. It then follows that its
transpose, yielding the surface rotational $\boldsymbol{\nabla}_{\partial
\Omega}\cdot\boldsymbol{u}\times\boldsymbol{n}$ of a vector field
$\boldsymbol{u}$ \cite[p. 73]{Nedelec:01},\ defines also a bounded operator
from $H^{s}\left(  \partial\Omega;\mathbb{C}^{3}\right)  $ into $H^{s-1}%
\left(  \partial\Omega\right)  $ for $0\leq s\leq1$. It is worth noting that
in \cite[p. 73]{Nedelec:01} the surface rotational was considered for
tangential vector fields only. However, the cross-product involved in the
expression of this operator makes it possible to extend this definition to a
general vector field. We show then that G\"{u}nter derivatives $\mathcal{M}%
_{ij}^{\left(  \boldsymbol{n}\right)  }$ can be expressed as differential
forms to retrieve an integration by parts formula relatively to these
operators on a patch of $\partial\Omega$. Even if this formula was already
established by direct calculation in \cite{Gunter:53}, we think that the
formalism of differential forms is more appropriate for understanding the
basic principle underlying its derivation. It is used here to get explicit
expressions for the G\"{u}nter derivatives of a piecewise smooth function
defined on a the boundary of a curved polyhedron. This way to write these
derivatives is fundamental in the effective implementations of boundary
element codes. In section \ref{section2}, we begin with some recalls on other
previous expressions for G\"{u}nter derivative matrix $\mathcal{M}^{\left(
\boldsymbol{n}\right)  }$. With the help of a vectorial Green formula, partly
introduced in \cite{Knauff-Kress:79} and in a more complete form in
\cite{Costabel:91,Costabel-Dauge:96}, we derive a useful volume variational
expression for $\mathcal{M}^{\left(  \boldsymbol{n}\right)  }$. As an
application in section \ref{section3}, we extend the regularization techniques
(the way for expressing non-integrable kernels involved in boundary layer
potentials in terms of integrals converging in the usual meaning) devised by
Le\ Lou\"{e}r \cite{LeLouer:14,Lelouerv1:12} for the elastic wave layer
potentials to Lipschitz domains. It is worth recalling that due to its
importance in practical implementations of numerical solvers for elastic wave
scattering problems, several other regularizations techniques, much more
involved in our opinion, have been already proposed (cf., for example,
\cite{Nedelec:86,Nishimura-kobayashi:89,Becache-et-al:93,Liu-Rizzo:93,Grany-Chang:92,Rizzo-et-al:85}
to cite a few). Finally, in Section~\ref{section4}, making use of the
connection between two- and three-dimensional Green kernels for the Helmholtz
equation, we transpose the regularization techniques in the spatial scale to
planar elastic waves.

\section{Extension of the G\"{u}nter derivatives to a Lipschitz
domain\label{section1}}

We first establish some mapping properties of the G\"{u}nter derivatives in
the framework of a Lipschitz domain. We next show that they can be written as
differential 2-forms, up to a Hodge star identification. This will allow us to
retrieve an integration by parts formula on the patches of $\partial\Omega$.
That yields an expression of these derivatives well suited for boundary
element codes.

\subsection{Mapping properties of the G\"{u}nter derivatives for Lipschitz
domains}

Property (\ref{Tau_ij.n}) ensures that $\mathcal{M}_{ij}^{\left(
\boldsymbol{n}\right)  }$ is a first-order differential operator tangential to
$\partial\Omega$ in the sense of \cite[p. 147]{McLean:00}. This immediately
leads to the following first mapping property whose proof is given in
Lemma\ 4.23 of this reference.

\begin{proposition}
There exists a constant $C$ independent of $u\in H^{1}\left(  \partial
\Omega\right)  $ such that%
\begin{equation}
\left\Vert \mathcal{M}_{ij}^{\left(  \boldsymbol{n}\right)  }u\right\Vert
_{L^{2}\left(  \partial\Omega\right)  }\leq C\left\Vert u\right\Vert
_{H^{1}\left(  \partial\Omega\right)  }. \label{Estimate_H1}%
\end{equation}

\end{proposition}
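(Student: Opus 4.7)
The plan is to recognize $\mathcal{M}_{ij}^{(\boldsymbol{n})}$ as a tangential first-order differential operator on $\partial\Omega$ with coefficients in $L^{\infty}(\partial\Omega)$, and to conclude by the standard boundedness of the tangential gradient on a Lipschitz boundary. First, I would use property (\ref{Tau_ij.n}) to rewrite, for any $u\in H^{2}_{\mathrm{loc}}(\mathbb{R}^{3})$,
\begin{equation*}
\mathcal{M}_{ij}^{(\boldsymbol{n})} u = \boldsymbol{\nabla} u \cdot \boldsymbol{\tau}_{ij} = \boldsymbol{\nabla}_{\partial\Omega} u \cdot \boldsymbol{\tau}_{ij},
\end{equation*}
where the second equality comes from the decomposition $\boldsymbol{\nabla} u|_{\partial\Omega} = \boldsymbol{\nabla}_{\partial\Omega} u + (\partial_{\boldsymbol{n}} u)\,\boldsymbol{n}$ together with $\boldsymbol{\tau}_{ij}\cdot\boldsymbol{n}=0$. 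This eliminates every reference to interior values of $u$ and expresses $\mathcal{M}_{ij}^{(\boldsymbol{n})} u$ solely in terms of quantities intrinsic to $\partial\Omega$.

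Next, I would invoke the fact that on the boundary of a Lipschitz domain the tangential gradient defines a bounded map $\boldsymbol{\nabla}_{\partial\Omega}: H^{1}(\partial\Omega)\to L^{2}(\partial\Omega;\mathbb{C}^{3})$, which is a direct consequence of the definition of $H^{1}(\partial\Omega)$ via local Lipschitz charts. Since $|n_i|,|n_j|\le 1$ almost everywhere, the vector $\boldsymbol{\tau}_{ij}$ belongs to $L^{\infty}(\partial\Omega;\mathbb{C}^{3})$ with norm bounded by $\sqrt{2}$. A pointwise Cauchy--Schwarz estimate followed by integration then yields
\begin{equation*}
\|\mathcal{M}_{ij}^{(\boldsymbol{n})} u\|_{L^{2}(\partial\Omega)} \le \|\boldsymbol{\tau}_{ij}\|_{L^{\infty}(\partial\Omega;\mathbb{C}^{3})}\,\|\boldsymbol{\nabla}_{\partial\Omega} u\|_{L^{2}(\partial\Omega;\mathbb{C}^{3})} \le C\,\|u\|_{H^{1}(\partial\Omega)},
\end{equation*}
which is exactly the claimed estimate (\ref{Estimate_H1}).

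The only non-routine point, and therefore the main obstacle to watch out for, is that the original definition (\ref{Mij}) presumes $u\in H^{2}_{\mathrm{loc}}(\mathbb{R}^{3})$, whereas the statement concerns $u\in H^{1}(\partial\Omega)$. I would handle this by density: the traces on $\partial\Omega$ of elements of $H^{2}_{\mathrm{loc}}(\mathbb{R}^{3})$ form a dense subspace of $H^{1}(\partial\Omega)$ on which both sides of the identity $\mathcal{M}_{ij}^{(\boldsymbol{n})} u = \boldsymbol{\nabla}_{\partial\Omega} u\cdot\boldsymbol{\tau}_{ij}$ make sense and agree; the continuity just derived then extends $\mathcal{M}_{ij}^{(\boldsymbol{n})}$ by density to a bounded operator from $H^{1}(\partial\Omega)$ to $L^{2}(\partial\Omega)$, consistently with the tangential expression. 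This extension-by-density argument is precisely the general principle formalized in Lemma~4.23 of \cite{McLean:00} referenced by the authors.
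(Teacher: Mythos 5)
Your proposal is correct and follows essentially the same route as the paper: the authors simply observe that $\boldsymbol{\tau}_{ij}\cdot\boldsymbol{n}=0$ makes $\mathcal{M}_{ij}^{(\boldsymbol{n})}$ a tangential first-order differential operator and delegate the estimate to Lemma~4.23 of \cite{McLean:00}, whose content (tangentiality, boundedness of the tangential gradient on $H^{1}(\partial\Omega)$, $L^{\infty}$ bound on the coefficients, and extension by density) is exactly what you have spelled out.
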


To go further, we make the following observation which, surprisingly enough,
does not seem to have been done before. \ It consists in noting that vector
$\boldsymbol{\tau}_{ij}$, defined in (\ref{Tau_ij}), can be written under the
following form using the elementary double product formula%
\[
\boldsymbol{\tau}_{ij}=\left(  \boldsymbol{n}\cdot\boldsymbol{e}_{j}\right)
\boldsymbol{e}_{i}-\left(  \boldsymbol{n}\cdot\boldsymbol{e}_{i}\right)
\boldsymbol{e}_{j}=\boldsymbol{n}\times\left(  \boldsymbol{e}_{i}%
\times\boldsymbol{e}_{j}\right)  \;\left(  1\leq i,j\leq3\right)  .
\]
In this way, using the properties of the mixed product, we can also put
G\"{u}nter derivative $\mathcal{M}_{ij}^{\left(  \boldsymbol{n}\right)  }u$ in
the following form%
\begin{equation}
\mathcal{M}_{ij}^{\left(  \boldsymbol{n}\right)  }u=\boldsymbol{\nabla}%
u\cdot\boldsymbol{n}\times\left(  \boldsymbol{e}_{i}\times\boldsymbol{e}%
_{j}\right)  =\boldsymbol{\nabla}u\times\boldsymbol{n}\cdot\boldsymbol{e}%
_{i}\times\boldsymbol{e}_{j}. \label{Guxneiej}%
\end{equation}

Indeed, formula (\ref{Guxneiej}) expresses $\mathcal{M}_{ij}^{\left(
\boldsymbol{n}\right)  }u$ as a component (or its opposite) of the tangential
vector rotational of $u$ in the canonical basis of $\mathbb{R}^{3}$
\[
\mathcal{M}_{ij}^{\left(  \boldsymbol{n}\right)  }u=\boldsymbol{\nabla
}_{\partial\Omega}u\times\boldsymbol{n}\cdot\boldsymbol{e}_{i}\times
\boldsymbol{e}_{j}.
\]
(See \cite[p. 69]{Nedelec:01} for the definition and properties of the
tangential gradient $\boldsymbol{\nabla}_{\partial\Omega}u$ and the tangential
vector rotational of a function when, for example, $\Omega^{+}$ is a
$\mathcal{C}^{2}$-domain.)

We have next the following lemma which is established in a less
straightforward way in \cite{Gunter:53} for a $\mathcal{C}^{1,\alpha}$-domain
$\left(  0<\alpha\leq1\right)  $.

\begin{lemma}
For $u$ and $v$ in $\mathcal{C}_{\text{comp}}^{1}\left(  \mathbb{R}%
^{3}\right)  $, the following integration by parts formula%
\begin{equation}
\int_{\partial\Omega}v\mathcal{M}_{ij}^{\left(  \boldsymbol{n}\right)
}u\ ds=\int_{\partial\Omega}u\mathcal{M}_{ji}^{\left(  \boldsymbol{n}\right)
}v\ ds \label{Int_Sym}%
\end{equation}
holds true.
\end{lemma}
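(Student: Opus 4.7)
The plan is to apply the classical divergence theorem on the bounded Lipschitz domain $\Omega^{+}$ to a vector field constructed so that its normal component on $\partial\Omega$ reproduces the integrand $v\mathcal{M}_{ij}^{(\boldsymbol{n})}u$, then to recognize the resulting bulk integrand as the divergence of another vector field whose normal trace is $u\mathcal{M}_{ji}^{(\boldsymbol{n})}v$. The skew-symmetry built into the definition \eqref{Mij} is what makes this identification work.

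Concretely, I would introduce
\[
\boldsymbol{F}=v\,\partial_{x_{i}}u\,\boldsymbol{e}_{j}-v\,\partial_{x_{j}}u\,\boldsymbol{e}_{i},
\]
which is continuous with compact support since $u,v\in\mathcal{C}_{\text{comp}}^{1}(\mathbb{R}^{3})$. Writing $n_{k}=\boldsymbol{n}\cdot\boldsymbol{e}_{k}$ and applying \eqref{Mij} gives
\[
\boldsymbol{F}\cdot\boldsymbol{n}=v\bigl(n_{j}\partial_{x_{i}}u-n_{i}\partial_{x_{j}}u\bigr)=v\mathcal{M}_{ij}^{(\boldsymbol{n})}u.
\]
Computing the distributional divergence and exploiting that the mixed second partials of $u$ commute in $\mathcal{D}'(\mathbb{R}^{3})$, the second-order terms cancel exactly and one obtains
\[
\operatorname{div}\boldsymbol{F}=\partial_{x_{j}}v\,\partial_{x_{i}}u-\partial_{x_{i}}v\,\partial_{x_{j}}u,
\]
which is continuous, hence certainly in $L^{2}(\Omega^{+})$. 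The divergence theorem on the Lipschitz domain $\Omega^{+}$, valid for vector fields in $H(\operatorname{div},\Omega^{+})$, then yields
\[
\int_{\partial\Omega}v\mathcal{M}_{ij}^{(\boldsymbol{n})}u\,ds=\int_{\Omega^{+}}\operatorname{div}\boldsymbol{F}\,dx.
\]

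To close the loop I would swap the roles of $(u,i)$ and $(v,j)$ and set
\[
\boldsymbol{G}=u\,\partial_{x_{j}}v\,\boldsymbol{e}_{i}-u\,\partial_{x_{i}}v\,\boldsymbol{e}_{j},
\]
so that $\boldsymbol{G}\cdot\boldsymbol{n}=u\mathcal{M}_{ji}^{(\boldsymbol{n})}v$ and, by the identical calculation, $\operatorname{div}\boldsymbol{G}=\partial_{x_{i}}u\,\partial_{x_{j}}v-\partial_{x_{j}}u\,\partial_{x_{i}}v$, which coincides pointwise with $\operatorname{div}\boldsymbol{F}$. A second application of the divergence theorem produces the right-hand side of \eqref{Int_Sym}, proving the identity.

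The only real subtlety, and the step I expect to be the main obstacle, is that $\boldsymbol{F}$ is not of class $\mathcal{C}^{1}$ (only $\mathcal{C}^{0}$), so the elementary pointwise divergence theorem does not apply directly. The cleanest remedy is to read the cancellation of mixed second derivatives in the distributional sense as above: this places $\boldsymbol{F}$ in $L^{\infty}(\Omega^{+};\mathbb{R}^{3})\cap H(\operatorname{div},\Omega^{+})$, a space for which Green's formula on Lipschitz domains is classical. Alternatively, one may mollify $u$ and $v$, apply the $\mathcal{C}^{\infty}$ version of the identity, and pass to the limit using the $H^{1}\to L^{2}$ continuity of $\mathcal{M}_{ij}^{(\boldsymbol{n})}$ established in the preceding proposition, together with dominated convergence on the boundary integrals. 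I would favor the distributional route as being shorter and requiring no auxiliary estimate.
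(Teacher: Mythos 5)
Your proof is correct and is essentially the paper's argument in disguise: the difference $\boldsymbol{F}-\boldsymbol{G}$ of your two fields is exactly (up to sign) the solenoidal field $\boldsymbol{\nabla}\times\left(uv\,\boldsymbol{e}_{i}\times\boldsymbol{e}_{j}\right)$ to which the paper applies the divergence theorem on $\Omega^{\pm}$, so the common volume integral you compute twice simply vanishes there. Both arguments rest on the same Gauss formula for Lipschitz domains, and your mollification remark adequately covers the (shared) subtlety that the field is only continuous rather than $\mathcal{C}^{1}$.
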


\begin{proof}
The proof directly follows from the following simple observation%
\[
\boldsymbol{\nabla}\times\left(  uv\boldsymbol{e}_{i}\times\boldsymbol{e}%
_{j}\right)  =v\boldsymbol{\nabla}\times\left(  u\boldsymbol{e}_{i}%
\times\boldsymbol{e}_{j}\right)  -u\boldsymbol{\nabla}\times\left(
v\boldsymbol{e}_{j}\times\boldsymbol{e}_{i}\right)
\]
and Green's formula in Lipschitz domains \cite[Th. 3.34]{McLean:00}%
\begin{align*}
\int_{\Omega^{\pm}}\underset{=0}{\underbrace{\boldsymbol{\nabla}%
\cdot\boldsymbol{\nabla}\times\left(  uv\boldsymbol{e}_{i}\times
\boldsymbol{e}_{j}\right)  }}\ dx  & =\pm\int_{\partial\Omega}\left(
\boldsymbol{\nabla}\times\left(  uv\boldsymbol{e}_{i}\times\boldsymbol{e}%
_{j}\right)  \right)  \cdot\boldsymbol{n}ds\\
& =\mp\left(  \int_{\partial\Omega}v\mathcal{M}_{ij}^{\left(  \boldsymbol{n}%
\right)  }u\ ds-\int_{\partial\Omega}u\mathcal{M}_{ji}^{\left(  \boldsymbol{n}%
\right)  }v\ ds\right)  .
\end{align*}

\end{proof}

We then come to the following theorem embodying optimal mapping properties of
the G\"{u}nter derivatives.

\begin{theorem}
\label{Gunter_map}Under the above assumption that $\Omega^{+}$ is a bounded
Lipschitz domain, G\"{u}nter derivative $\mathcal{M}_{ij}^{\left(
\boldsymbol{n}\right)  }$ can be extended in a bounded linear operator from
$H^{s}\left(  \partial\Omega\right)  $ into $H^{s-1}\left(  \partial
\Omega\right)  $ for $0\leq s\leq1.$
\end{theorem}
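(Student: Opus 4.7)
The plan is to establish the two endpoints $s=1$ and $s=0$ separately and then pass to $0<s<1$ by interpolation in the Sobolev scale on the Lipschitz boundary $\partial\Omega$.

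The endpoint $s=1$ is already in hand: it is exactly estimate (\ref{Estimate_H1}) in the preceding Proposition. To obtain the endpoint $s=0$, I would argue by duality using the integration by parts formula (\ref{Int_Sym}) together with the obvious skew-symmetry $\mathcal{M}_{ji}^{(\boldsymbol{n})} = -\mathcal{M}_{ij}^{(\boldsymbol{n})}$ read off from (\ref{Mij}). Concretely, given $u \in L^2(\partial\Omega)$, I would \emph{define} $\mathcal{M}_{ij}^{(\boldsymbol{n})} u \in H^{-1}(\partial\Omega)$ as the antilinear form
\[
\langle \mathcal{M}_{ij}^{(\boldsymbol{n})} u , v \rangle := \int_{\partial\Omega} u\,\mathcal{M}_{ji}^{(\boldsymbol{n})} v \, ds, \qquad v \in H^1(\partial\Omega).
\]
The right-hand side makes sense because, by (\ref{Estimate_H1}), $\mathcal{M}_{ji}^{(\boldsymbol{n})} v \in L^2(\partial\Omega)$ with norm controlled by $\|v\|_{H^1(\partial\Omega)}$; Cauchy--Schwarz then yields
\[
|\langle \mathcal{M}_{ij}^{(\boldsymbol{n})} u, v \rangle| \le \|u\|_{L^2(\partial\Omega)} \|\mathcal{M}_{ji}^{(\boldsymbol{n})} v\|_{L^2(\partial\Omega)} \le C\|u\|_{L^2(\partial\Omega)} \|v\|_{H^1(\partial\Omega)},
\]
hence $\mathcal{M}_{ij}^{(\boldsymbol{n})} : L^2(\partial\Omega) \to H^{-1}(\partial\Omega)$ boundedly. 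Consistency with the pointwise definition (\ref{Mij}) for smooth $u$ follows from (\ref{Int_Sym}), extended to $u \in H^1(\partial\Omega)$ by density (traces of $\mathcal{C}^1_{\text{comp}}(\mathbb{R}^3)$ functions being dense in $H^1(\partial\Omega)$) and the continuity just proved.

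Having $\mathcal{M}_{ij}^{(\boldsymbol{n})} : H^1(\partial\Omega) \to L^2(\partial\Omega)$ and $\mathcal{M}_{ij}^{(\boldsymbol{n})} : L^2(\partial\Omega) \to H^{-1}(\partial\Omega)$ bounded, the conclusion for intermediate $s$ follows by complex (or real) interpolation, since the Sobolev spaces $H^s(\partial\Omega)$ on the Lipschitz boundary $\partial\Omega$ form an interpolation scale for $-1 \le s \le 1$ (cf.\ \cite[Thm.\ B.11 and Ch.\ 3]{McLean:00}). One gets $\mathcal{M}_{ij}^{(\boldsymbol{n})} : H^s(\partial\Omega) \to H^{s-1}(\partial\Omega)$ boundedly for every $s \in [0,1]$, which is the claim.

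The only potentially delicate point is the last one, namely that the Sobolev spaces on a merely Lipschitz boundary do constitute a genuine interpolation family on the full range needed here; but this is a by-now classical fact, and the definition of $H^s$ for $|s| \le 1$ via local charts and a Lipschitz partition of unity is precisely tailored to make this property hold. Apart from that, the argument is entirely routine: the substance lies in the previously established identity (\ref{Int_Sym}), which encodes the self-duality (up to sign) of the family $\{\mathcal{M}_{ij}^{(\boldsymbol{n})}\}$ and makes the $s=0$ endpoint immediate from the $s=1$ one.
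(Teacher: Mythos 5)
Your proposal is correct and follows exactly the route the paper takes: the paper's proof is the one-line remark that the theorem ``is a straightforward consequence of estimate (\ref{Estimate_H1}) and symmetry property (\ref{Int_Sym}) by duality and interpolation techniques,'' which is precisely your $s=1$ endpoint, your dual definition at $s=0$, and your interpolation step spelled out in detail. The only cosmetic difference is that the paper's duality pairing is bilinear rather than antilinear, which does not affect the argument.
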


\begin{proof}
It is a straightforward consequence of estimate (\ref{Estimate_H1}) and
symmetry property (\ref{Int_Sym}) by duality and interpolation techniques.
\end{proof}

\begin{corollary}
\label{tang_vect_rot}Under the general assumptions of the above theorem, the
tangential vector rotational defines a bounded linear operator $u\rightarrow
\boldsymbol{\nabla}_{\partial\Omega}u\times\boldsymbol{n}$ from $H^{s}\left(
\partial\Omega\right)  $ into $H^{s-1}\left(  \partial\Omega;\mathbb{C}%
^{3}\right)  $ for $0\leq s\leq1$. Consequently, the surface rotational gives
rise to a bounded operator $\boldsymbol{u}\in H^{s}\left(  \partial
\Omega;\mathbb{C}^{3}\right)  \rightarrow\boldsymbol{\nabla}_{\partial\Omega
}\cdot\boldsymbol{u}\times\boldsymbol{n}\in$\ $H^{s-1}\left(  \partial
\Omega\right)  $ for $0\leq s\leq1$.
\end{corollary}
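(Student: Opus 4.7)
The plan is to deduce both statements of the corollary from Theorem~\ref{Gunter_map} combined with the identity~(\ref{Guxneiej}). For the first assertion, I would exploit the observation that the canonical basis vectors satisfy $\boldsymbol{e}_k=\boldsymbol{e}_i\times\boldsymbol{e}_j$ whenever $(i,j,k)$ is a cyclic permutation of $(1,2,3)$. Reading off the $k$-th component of $\boldsymbol{\nabla}_{\partial\Omega}u\times\boldsymbol{n}$ in this basis then yields, via~(\ref{Guxneiej}),
\[
(\boldsymbol{\nabla}_{\partial\Omega}u\times\boldsymbol{n})\cdot\boldsymbol{e}_k=\boldsymbol{\nabla}_{\partial\Omega}u\times\boldsymbol{n}\cdot\boldsymbol{e}_i\times\boldsymbol{e}_j=\mathcal{M}_{ij}^{(\boldsymbol{n})}u.
\]
Applying Theorem~\ref{Gunter_map} componentwise delivers the claimed boundedness of $u\mapsto\boldsymbol{\nabla}_{\partial\Omega}u\times\boldsymbol{n}$ from $H^s(\partial\Omega)$ into $H^{s-1}(\partial\Omega;\mathbb{C}^3)$ for $0\leq s\leq 1$.

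For the second assertion, I would run a duality argument. Starting from smooth $v$ and $\boldsymbol{u}$, the mixed product identity gives $\boldsymbol{\nabla}_{\partial\Omega}v\cdot(\boldsymbol{u}\times\boldsymbol{n})=-(\boldsymbol{\nabla}_{\partial\Omega}v\times\boldsymbol{n})\cdot\boldsymbol{u}$, and since $\partial\Omega$ is closed, integration by parts for tangential vector fields rewrites
\[
\int_{\partial\Omega}v\,\boldsymbol{\nabla}_{\partial\Omega}\cdot\boldsymbol{u}\times\boldsymbol{n}\,ds=\int_{\partial\Omega}(\boldsymbol{\nabla}_{\partial\Omega}v\times\boldsymbol{n})\cdot\boldsymbol{u}\,ds.
\]
Bounding the right-hand side through the $H^{1-s}$--$H^{s-1}$ duality on $\partial\Omega$ and applying the first assertion with index $1-s$ gives
\[
\left|\int_{\partial\Omega}v\,\boldsymbol{\nabla}_{\partial\Omega}\cdot\boldsymbol{u}\times\boldsymbol{n}\,ds\right|\leq C\,\|v\|_{H^{1-s}(\partial\Omega)}\,\|\boldsymbol{u}\|_{H^{s}(\partial\Omega;\mathbb{C}^3)}.
\]
Density of smooth functions then extends the identity by continuity, and characterizes $\boldsymbol{\nabla}_{\partial\Omega}\cdot\boldsymbol{u}\times\boldsymbol{n}$ as an element of the dual $H^{s-1}(\partial\Omega)=(H^{1-s}(\partial\Omega))'$ with the desired norm control.

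I anticipate no real obstacle: all the analytical content is packaged inside Theorem~\ref{Gunter_map}, and the corollary amounts to a componentwise reading plus transposition. The only step requiring a little care is the integration-by-parts identity for the surface divergence of a tangential field on a Lipschitz boundary; this can in fact be sidestepped entirely by expanding $\boldsymbol{\nabla}_{\partial\Omega}\cdot\boldsymbol{u}\times\boldsymbol{n}$ on smooth data as a sum of cyclic G\"unter derivatives $\mathcal{M}_{ij}^{(\boldsymbol{n})}u_k$ and invoking the symmetry formula~(\ref{Int_Sym}) componentwise, which directly recovers the weak pairing above without any external surface-divergence machinery.
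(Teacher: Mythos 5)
Your proposal is correct and follows essentially the same route as the paper: the first assertion by identifying the components of $\boldsymbol{\nabla}_{\partial\Omega}u\times\boldsymbol{n}$ in the canonical basis with the G\"unter derivatives via (\ref{Guxneiej}) and applying Theorem~\ref{Gunter_map} componentwise, and the second by transposition. The paper compresses the duality step into the single remark that the surface rotational is the transpose of the tangential vector rotational; you merely spell out the pairing and the $H^{1-s}$--$H^{s-1}$ estimate explicitly.
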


\begin{proof}
Immediate since the components of $\boldsymbol{\nabla}_{\partial\Omega}%
u\times\boldsymbol{n}$ are nothing else but G\"{u}nter derivatives and the
surface rotational is the transpose of the tangential vector rotational.
\end{proof}

\begin{remark}
When $u$ and $v$ are the respective traces of functions in $H^{1}\left(
\mathbb{R}^{3}\right)  $, it is established in \cite[p. 855]{Buffa-et-al:02}
that $\boldsymbol{\nabla}u\times\boldsymbol{n}$ is well-defined in $H_{\Vert
}^{-1/2}\left(  \partial\Omega;\mathbb{C}^{3}\right)  $, the dual space of
\[
H_{\Vert}^{1/2}\left(  \partial\Omega;\mathbb{C}^{3}\right)  =\left\{
\boldsymbol{v}\in L^{2}\left(  \partial\Omega;\mathbb{C}^{3}\right)
;\;\boldsymbol{v}=\boldsymbol{n}\times(\boldsymbol{w}\times\boldsymbol{n}%
),\;\boldsymbol{w}\in H^{1}\left(  \Omega^{\pm};\mathbb{C}^{3}\right)
\right\}
\]
equipped with the graph norm and that $\boldsymbol{\nabla}u\times
\boldsymbol{n}$ depends on the trace $u|_{\partial\Omega}$ of $u$ on
$\partial\Omega$ \cite[p. 855]{Buffa-et-al:02} only. It is also proved in this
paper \cite[formulae (15) p. 850 and Lemma 2.3 p. 851]{Buffa-et-al:02}\ that
$H_{\Vert}^{-1/2}\left(  \partial\Omega;\mathbb{C}^{3}\right)  $ can be
identified to a closed subspace of $H^{-1/2}\left(  \partial\Omega
;\mathbb{C}^{3}\right)  $. This is the particular case corresponding to
$s=1/2$ which has been previously mentioned.
\end{remark}

The following symmetry result is known for a long time in the case of smoother
domains and more regular functions \cite[p. 284]{Kupradze:79} and is an
immediate consequence of the definition of the G\"{u}nter derivative matrix
and the symmetry and mapping properties of $\mathcal{M}_{ij}^{\left(
\boldsymbol{n}\right)  }$.

\begin{corollary}
G\"{u}nter derivative matrix $\mathcal{M}^{\left(  \boldsymbol{n}\right)  }$
defines a bounded linear operator from $H^{s}\left(  \partial\Omega
;\mathbb{C}^{3}\right)  $ into $H^{s-1}\left(  \partial\Omega;\mathbb{C}%
^{3}\right)  $ for $0\leq s\leq1$ with the following symmetry property%
\begin{equation}
\left\langle \boldsymbol{v},\mathcal{M}^{\left(  \boldsymbol{n}\right)
}\boldsymbol{u}\right\rangle _{1-s,\partial\Omega}=\left\langle \boldsymbol{u}%
,\mathcal{M}^{\left(  \boldsymbol{n}\right)  }\boldsymbol{v}\right\rangle
_{s,\partial\Omega},\;\boldsymbol{u}\in H^{s}\left(  \partial\Omega
;\mathbb{C}^{3}\right)  ,\boldsymbol{v}\in H^{1-s}\left(  \partial
\Omega;\mathbb{C}^{3}\right)  . \label{Sym_M}%
\end{equation}
For simplicity, we keep the same notation for the bilinear form underlying the
duality product between $H^{s}\left(  \partial\Omega;\mathbb{C}^{3}\right)  $
and $H^{-s}\left(  \partial\Omega;\mathbb{C}^{3}\right)  $ and that
$\left\langle \mathcal{\cdot},\cdot\right\rangle _{s,\partial\Omega}$ between
$H^{s}\left(  \partial\Omega\right)  $ and $H^{-s}\left(  \partial
\Omega\right)  $
\[
\left\langle \boldsymbol{v},\boldsymbol{\ell}\right\rangle _{s,\partial\Omega
}=\sum_{i=1}^{3}\left\langle v_{i},\ell_{i}\right\rangle _{s,\partial\Omega
},\;\boldsymbol{\ell}\in H^{-s}\left(  \partial\Omega;\mathbb{C}^{3}\right)
,\boldsymbol{v}\in H^{s}\left(  \partial\Omega;\mathbb{C}^{3}\right)  .
\]

\end{corollary}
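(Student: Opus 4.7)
The plan is to reduce everything to the scalar Günter derivative case, since the matrix operator $\mathcal{M}^{(\boldsymbol{n})}$ is built componentwise as $(\mathcal{M}^{(\boldsymbol{n})}\boldsymbol{u})_i = \sum_{j=1}^3 \mathcal{M}_{ij}^{(\boldsymbol{n})} u_j$. First I would note that since each scalar entry $\mathcal{M}_{ij}^{(\boldsymbol{n})}$ is, by Theorem~\ref{Gunter_map}, a bounded linear operator from $H^{s}(\partial\Omega)$ into $H^{s-1}(\partial\Omega)$ for $0\leq s\leq 1$, and the product space norm on $H^{s}(\partial\Omega;\mathbb{C}^3)$ is just the sum of the component norms, the finite linear combination defining $(\mathcal{M}^{(\boldsymbol{n})}\boldsymbol{u})_i$ is automatically controlled by $\|\boldsymbol{u}\|_{H^{s}(\partial\Omega;\mathbb{C}^3)}$. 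Summing over $i=1,2,3$ yields the claimed mapping property with an operator norm bounded by a constant times the maximum of the scalar operator norms.

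For the symmetry identity (\ref{Sym_M}), I would argue as follows. The scalar identity (\ref{Int_Sym}) is stated for $u,v\in\mathcal{C}^1_{\mathrm{comp}}(\mathbb{R}^3)$, but by the boundedness of $\mathcal{M}_{ij}^{(\boldsymbol{n})}$ between the relevant Sobolev spaces, combined with the density of traces of $\mathcal{C}^1_{\mathrm{comp}}$-functions in $H^{s}(\partial\Omega)$ for $0\leq s\leq 1$, and interpolation/duality (exactly as invoked in the proof of Theorem~\ref{Gunter_map}), the identity extends to a duality pairing
\[
\langle v,\mathcal{M}_{ij}^{(\boldsymbol{n})}u\rangle_{1-s,\partial\Omega}=\langle u,\mathcal{M}_{ji}^{(\boldsymbol{n})}v\rangle_{s,\partial\Omega}
\]
for $u\in H^{s}(\partial\Omega)$ and $v\in H^{1-s}(\partial\Omega)$. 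Expanding the vector duality product componentwise and applying this identity term by term then gives
\[
\langle \boldsymbol{v},\mathcal{M}^{(\boldsymbol{n})}\boldsymbol{u}\rangle_{1-s,\partial\Omega}=\sum_{i,j}\langle v_i,\mathcal{M}_{ij}^{(\boldsymbol{n})}u_j\rangle_{1-s,\partial\Omega}=\sum_{i,j}\langle u_j,\mathcal{M}_{ji}^{(\boldsymbol{n})}v_i\rangle_{s,\partial\Omega},
\]
and after swapping the summation indices $i\leftrightarrow j$ and reassembling, the right-hand side equals $\langle \boldsymbol{u},\mathcal{M}^{(\boldsymbol{n})}\boldsymbol{v}\rangle_{s,\partial\Omega}$.

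The main obstacle here is really just bookkeeping: making sure the index swap in the double sum correctly transforms $\mathcal{M}_{ij}$ into $\mathcal{M}_{ji}$ so as to reproduce the matrix action on $\boldsymbol{v}$. This relies crucially on the skew-symmetry structure $\mathcal{M}_{ij}^{(\boldsymbol{n})}=-\mathcal{M}_{ji}^{(\boldsymbol{n})}$ being absorbed into the symmetric pairing (\ref{Int_Sym}), where $i$ and $j$ appear swapped between the two sides. Once the scalar extension of (\ref{Int_Sym}) to arbitrary $s\in[0,1]$ is in hand, the vector case is purely algebraic, and no new analytic difficulty arises.
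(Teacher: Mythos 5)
Your proposal is correct and follows essentially the same route as the paper, which states that the corollary is an immediate consequence of the componentwise definition of $\mathcal{M}^{\left(\boldsymbol{n}\right)}$ together with the mapping property of Theorem~\ref{Gunter_map} and the symmetry identity (\ref{Int_Sym}) extended by density and duality. Your index bookkeeping ($\sum_{i,j}\langle v_i,\mathcal{M}_{ij}^{\left(\boldsymbol{n}\right)}u_j\rangle=\sum_{i,j}\langle u_j,\mathcal{M}_{ji}^{\left(\boldsymbol{n}\right)}v_i\rangle$, then relabel $i\leftrightarrow j$) checks out, and merely writes out the details the paper leaves implicit.
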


\begin{remark}
The duality $H^{s}\left(  \partial\Omega\right)  $, $H^{-s}\left(
\partial\Omega\right)  $ is usually denoted by $\left\langle \ell
,v\right\rangle _{s,\partial\Omega}$ for $\ell\in H^{-s}\left(  \partial
\Omega\right)  $ and $v\in H^{s}\left(  \partial\Omega\right)  $. The
transposition used here is convenient for the notation of the single-layer
potential of elastic waves given below.
\end{remark}

\subsection{Explicit expression for the G\"{u}nter derivatives}

Up to now, we have defined the G\"{u}nter derivatives just in the
distributional sense: $\mathcal{M}_{ij}^{\left(  \boldsymbol{n}\right)  }u\in$
$H^{s-1}\left(  \partial\Omega\right)  $ for $u\in H^{s}\left(  \partial
\Omega\right)  $, $0\leq s\leq1$. In concrete applications, $\partial\Omega$
must be considered as the boundary of a curved polyhedron. This is the case of
course when $\partial\Omega$ presents curved faces and edges, and vertices,
but also once the geometry has been effectively approximated (cf., for
example, \cite[p. 15]{Sauter-Schwab:11}). This means that $\partial\Omega$ can
be covered by a non-overlapping decomposition $\mathcal{T}$%
\[
\partial\Omega=\cup_{\omega\in\mathcal{T}}\overline{\omega}%
\]
where $\mathcal{T}$ \ is a finite family of open domains $\omega$ of
$\partial\Omega$ such that for all $\omega,\upsilon\in\mathcal{T}$,
$\omega\cap\upsilon=\emptyset$ when $\omega\neq\upsilon$. Each $\omega$ is
assumed to be a\ \ \textquotedblleft surface polygonal
domain\textquotedblright\ in the meaning that $\overline{\omega}\subset
U_{\omega}$ ($U_{\omega}$ being an open $\mathcal{C}^{\infty}$-parametrized
surface of $\mathbb{R}^{3}$), that its boundary $\partial\omega$ is a
piecewise smooth curve, and that $\omega$ is a Liptchitz domain of $U_{\omega
}$. Lipschitz domains of smooth manifolds are defined similarly to Lipchitz
domains of $\mathbb{R}^{N}$ replacing \textquotedblleft rigid
motions\textquotedblright\ in \cite[Definition 3.28]{McLean:00} by local
$\mathcal{C}^{\infty}$-diffeomorphisms onto domains of $\mathbb{R}^{2}$.
Recall that $\Omega^{+}$ is globally a Lipschitz domain, hence preventing
$\partial\Omega$ to present cusp points. Simple and widespread examples of
such boundaries are given by triangular meshes of surfaces of $\mathbb{R}^{3}%
$. Figure \ref{capsula}\ depicts a surface triangular mesh of a $C^{1,1}%
$-domain. The geometry and the mesh have been designed using the free software
\emph{Gmsh} \cite{Gezaine-Remacle:09}. For the exact surface, $\omega$ and
$U_{\omega}$ are obtained by local coordinate systems (local charts) (see, for
example, \cite{ChoquetBruhat-deWitt:91}). For the approximate surface,
$\omega$ is a triangle of $\mathbb{R}^{3}$ and $U_{\omega}$ is the plane
supporting this triangle.%

\begin{figure}[ptb]%
\centering
\includegraphics[
height=3.0554in,
width=3.0554in
]%
{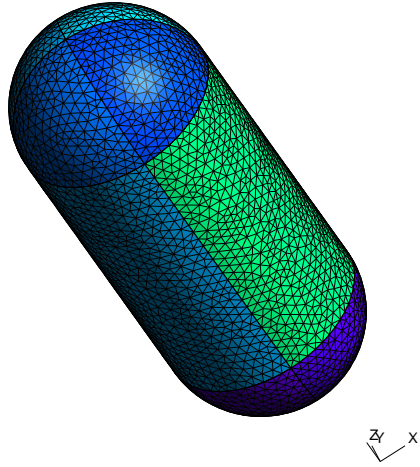}%
\caption{Polyhedral domain obtained from the surface mesh of $C^{1,1}%
$-domain.}%
\label{capsula}%
\end{figure}

Boundary element spaces are generally subspaces of the following one%
\[
\mathcal{P}_{m,\mathcal{T}}\left(  \partial\Omega\right)  =\left\{  u\in
L^{\infty}\left(  \partial\Omega\right)  ;\;u|_{\omega}\circ\Phi_{\omega}%
\in\mathbb{P}_{m},\;\forall\omega\in\mathcal{T}\right\}
\]
where $\Phi_{\omega}:D_{\omega}\subset\mathbb{R}^{2}\rightarrow U_{\omega}$ is
a local coordinate system on $U_{\omega}$ (cf., for example, \cite[p.
111]{ChoquetBruhat-deWitt:91}). Such kinds of spaces are contained in
$H^{s}\left(  \partial\Omega\right)  $ for $1/2\leq s\leq1$ if and only if
they are contained in
\[
\mathcal{C}_{\mathcal{T}}\left(  \partial\Omega\right)  =\left\{
u\in\mathcal{C}^{0}\left(  \mathcal{\partial}\Omega\right)  ;\;u|_{\omega}%
\in\mathcal{C}^{\infty}\left(  \overline{\omega}\right)  ,\;\forall\omega
\in\mathcal{T}\right\}
\]
(see, for example, \cite{Grisvard:82} when $\partial\Omega=\mathbb{R}^{2}$).

For $u\in\mathcal{C}_{\mathcal{T}}\left(  \partial\Omega\right)  $, we can
define $\mathcal{M}_{ij,\mathcal{T}}^{\left(  \boldsymbol{n}\right)  }u$
almost everywhere on $\partial\Omega$ by%
\[
\left(  \mathcal{M}_{ij,\mathcal{T}}^{\left(  \boldsymbol{n}\right)
}u\right)  |_{\omega}=\boldsymbol{\nabla}_{\omega}u|_{\omega}\times
\boldsymbol{n}\cdot\boldsymbol{e}_{i}\times\boldsymbol{e}_{j},\;\forall
\omega\in\mathcal{T}%
\]
where $\boldsymbol{\nabla}_{\omega}$ is the tangential gradient on $\omega$
and $\boldsymbol{n}$ is the unit normal on $\omega$ pointing outward from
$\Omega^{+}$. Our objective is to show that
\begin{equation}
\mathcal{M}_{ij}^{\left(  \boldsymbol{n}\right)  }u=\mathcal{M}%
_{ij,\mathcal{T}}^{\left(  \boldsymbol{n}\right)  }u\text{ for all }%
u\in\mathcal{C}_{\mathcal{T}}\left(  \partial\Omega\right)  . \label{MijT}%
\end{equation}
This identification requires some preliminaries to be established.

First, we can assume that $u|_{\omega}$ is the trace of a function $u_{\omega
}$ which is $\mathcal{C}^{\infty}$ in a neighborhood in $\mathbb{R}^{3}$ of
$U_{\omega}$. We can hence write%
\[
\boldsymbol{\nabla}_{\omega}u|_{\omega}\times\boldsymbol{n}\cdot\left(
\boldsymbol{e}_{i}\times\boldsymbol{e}_{j}\right)  =\left(  \boldsymbol{\nabla
}u_{\omega}\right)  |_{\omega}\times\boldsymbol{n}\cdot\boldsymbol{e}%
_{i}\times\boldsymbol{e}_{j}=\left(  \boldsymbol{e}_{i}\times\boldsymbol{e}%
_{j}\right)  \times\left(  \boldsymbol{\nabla}u_{\omega}\right)  |_{\omega
}\cdot\boldsymbol{n}%
\]
Since $\boldsymbol{e}_{i}\times\boldsymbol{e}_{j}\cdot\boldsymbol{e}%
_{k}=\varepsilon_{ijk}$ where $\varepsilon_{ijk}$ is the Levi-Civita symbol
($\varepsilon_{ijk}=\pm1$ if $\left\{  i,j,k\right\}  $ is an even or odd
permutation of $\left\{  1,2,3\right\}  $ respectively, and 0 otherwise),
$\boldsymbol{e}_{i}\times\boldsymbol{e}_{j}$ can be expressed in terms of its
components in the canonical basis of $\mathbb{R}^{3}$%
\[
\boldsymbol{e}_{i}\times\boldsymbol{e}_{j}=\sum_{k=1}^{3}\varepsilon
_{ijk}\boldsymbol{e}_{k}.
\]
Using the canonical identification of vector fields to 1-forms on
$\mathbb{R}^{3}$ and the Hodge star operator on $\mathbb{R}^{3}$, $\left(
\boldsymbol{e}_{i}\times\boldsymbol{e}_{j}\right)  \times\left(
\boldsymbol{\nabla}u_{\omega}\right)  |_{\omega}$ can be written as follows
\[
\left(  \boldsymbol{e}_{i}\times\boldsymbol{e}_{j}\right)  \times\left(
\boldsymbol{\nabla}u\right)  |_{\omega}=\sum_{k=1}^{3}\varepsilon_{ijk}%
\ast\left(  dx_{k}\wedge du_{\omega}\right)  |_{\omega}=\sum_{k=1}%
^{3}\varepsilon_{ijk}\ast\left(  d\left(  -u_{\omega}dx_{k}\right)  \right)
|_{\omega}%
\]

We thus retrieve the following result established component by component in
\cite{Gunter:53} without the formalism of differential forms.

\begin{lemma}
For $u\in\mathcal{C}^{\infty}\left(  \overline{\omega}\right)  $ and
$v\in\mathcal{C}_{\text{comp}}^{1}\left(  \mathbb{R}^{3}\right)  $, the
following integration by parts formula
\[
\int_{\omega}v\boldsymbol{\nabla}_{\omega}u\times\boldsymbol{n}\cdot
\boldsymbol{e}_{i}\times\boldsymbol{e}_{j}ds=-\sum_{k=1}^{3}\varepsilon
_{ijk}\int_{\partial\omega_{_{\circlearrowleft}}}uv\ dx_{k}-\int_{\omega
}u\boldsymbol{\nabla}_{\omega}v\times\boldsymbol{n}\cdot\boldsymbol{e}%
_{i}\times\boldsymbol{e}_{j}ds
\]
holds true. The orientation $\partial\omega_{_{\circlearrowleft}}$ is that
induced by $\boldsymbol{n}$.
\end{lemma}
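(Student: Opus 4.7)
The plan is to apply Stokes' theorem on $\omega$ to the smooth $1$-form $uv\,dx_k$ for each $k\in\{1,2,3\}$ and combine the three resulting identities with the Levi-Civita symbols, using the differential-form identifications already set up in the discussion leading to (\ref{Guxneiej}). Since $u\in\mathcal{C}^\infty(\overline\omega)$ and $v\in\mathcal{C}^1_{\text{comp}}(\mathbb{R}^3)$, the form $uv\,dx_k$ is of class $\mathcal{C}^1$ on $\overline\omega$, so Stokes' theorem on the Lipschitz surface domain $\omega\subset U_\omega$ with its piecewise smooth boundary oriented by $\boldsymbol n$ via the right-hand rule yields
\[
\int_{\partial\omega_{_{\circlearrowleft}}} uv\,dx_k \;=\; \int_\omega d(uv\,dx_k)\big|_\omega \;=\; \int_\omega (v\,du+u\,dv)\wedge dx_k\big|_\omega.
\]

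The next step is to translate this back into vector-calculus language, exactly as in the computation above (\ref{Guxneiej}): under the Hodge star identification, the $2$-form $d\alpha\wedge dx_k$ on $\mathbb{R}^3$ corresponds to the vector field $\boldsymbol\nabla\alpha\times\boldsymbol e_k$, whose pullback to $\omega$ paired with the oriented surface element equals $[\boldsymbol\nabla\alpha\times\boldsymbol e_k]\cdot\boldsymbol n\,ds$. The previous identity therefore reads
\[
\int_{\partial\omega_{_{\circlearrowleft}}} uv\,dx_k \;=\; \int_\omega (v\,\boldsymbol\nabla u + u\,\boldsymbol\nabla v)\times\boldsymbol e_k\cdot\boldsymbol n\,ds.
\]
I would then multiply by $\varepsilon_{ijk}$, sum over $k$, and use $\boldsymbol e_i\times\boldsymbol e_j=\sum_k\varepsilon_{ijk}\boldsymbol e_k$ together with the linearity of the cross product to obtain
\[
\sum_{k=1}^3\varepsilon_{ijk}\int_{\partial\omega_{_{\circlearrowleft}}} uv\,dx_k \;=\; \int_\omega (v\,\boldsymbol\nabla u + u\,\boldsymbol\nabla v)\times(\boldsymbol e_i\times\boldsymbol e_j)\cdot\boldsymbol n\,ds.
\]
A cyclic permutation of the scalar triple product gives $[\boldsymbol w\times(\boldsymbol e_i\times\boldsymbol e_j)]\cdot\boldsymbol n=-[\boldsymbol w\times\boldsymbol n]\cdot(\boldsymbol e_i\times\boldsymbol e_j)$, and since $\boldsymbol n\times\boldsymbol n=\boldsymbol 0$ one may replace each full gradient in $\boldsymbol\nabla(\cdot)\times\boldsymbol n$ by the tangential gradient $\boldsymbol\nabla_\omega(\cdot)$. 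Rearranging the resulting equality produces precisely the claimed formula.

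The only delicate point is invoking Stokes' theorem on a Lipschitz surface domain $\omega$ of the smooth patch $U_\omega$ whose boundary is merely piecewise smooth. This is a standard fact, handled either by pulling the computation back through the local chart of $U_\omega$ and using the Lipschitz version of Stokes' theorem (as quoted above for Green's formula from \cite[Th.~3.34]{McLean:00}), or by subdividing $\omega$ into subdomains with smooth boundaries and summing. The orientation bookkeeping matches the statement of the lemma since $\partial\omega_{_{\circlearrowleft}}$ is by convention the orientation induced by $\boldsymbol n$.
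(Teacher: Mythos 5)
Your proposal is correct and follows essentially the same route as the paper: both arguments identify $d(uv\,dx_k)\wedge$-type $2$-forms with $\boldsymbol{\nabla}(uv)\times\boldsymbol{e}_k$ via the Hodge star, apply Stokes' formula on $\omega$ with the orientation induced by $\boldsymbol{n}$, and split the result with the product rule for the (tangential) gradient. The only difference is cosmetic ordering — you start from Stokes on $uv\,dx_k$ and then rearrange with the cyclic triple-product identity, whereas the paper starts from the surface integral of $\boldsymbol{\nabla}_{\omega}(uv)\times\boldsymbol{n}\cdot\boldsymbol{e}_i\times\boldsymbol{e}_j$ and converts it to $\sum_k\varepsilon_{ijk}\int_\omega d(-uv\,dx_k)$ before invoking Stokes.
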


\begin{proof}
The lemma results from the following observations%
\[
\boldsymbol{\nabla}_{\omega}uv=v\boldsymbol{\nabla}_{\omega}%
u+u\boldsymbol{\nabla}_{\omega}v
\]%
\[
\int_{\partial\omega}\boldsymbol{\nabla}_{\omega}uv\times\boldsymbol{n}%
\cdot\left(  \boldsymbol{e}_{i}\times\boldsymbol{e}_{j}\right)  ds=\sum
_{k=1}^{3}\varepsilon_{ijk}\int_{\omega}\ast\left(  d\left(  -uvdx_{k}\right)
\right)  \cdot\boldsymbol{n}ds=\sum_{k=1}^{3}\varepsilon_{ijk}\int_{\omega
}d\left(  -uvdx_{k}\right)
\]
and Stokes' formula.
\end{proof}

The following theorem gives a simple way to calculate the G\"{u}nter
derivatives when dealing with a boundary element method.

\begin{theorem}
Formula (\ref{MijT}) holds true for any $u\in\mathcal{C}_{\mathcal{T}}\left(
\partial\Omega\right)  $.
\end{theorem}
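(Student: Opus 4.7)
The plan is to prove equality of two distributions on $\partial\Omega$ by testing both sides against smooth functions, exploiting the symmetry relation~(\ref{Int_Sym}) that defines $\mathcal{M}_{ij}^{(\boldsymbol{n})}$ by duality, together with the patchwise integration by parts formula furnished by the previous lemma. Since the paragraph preceding the theorem shows $\mathcal{C}_{\mathcal{T}}(\partial\Omega) \subset H^1(\partial\Omega)$, both $\mathcal{M}_{ij}^{(\boldsymbol{n})} u$ and $\mathcal{M}_{ij,\mathcal{T}}^{(\boldsymbol{n})} u$ lie in $L^2(\partial\Omega)$, so it suffices to verify
\[
\int_{\partial\Omega} v\, \mathcal{M}_{ij}^{(\boldsymbol{n})} u \, ds = \int_{\partial\Omega} v\, \mathcal{M}_{ij,\mathcal{T}}^{(\boldsymbol{n})} u \, ds \qquad \forall v \in \mathcal{C}^1_{\text{comp}}(\mathbb{R}^3),
\]
since the traces of such $v$ are dense in $L^2(\partial\Omega)$.

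For the left-hand side, I would invoke~(\ref{Int_Sym}) extended by density/continuity via Theorem~\ref{Gunter_map} to $u \in H^1(\partial\Omega)$ and $v$ smooth, giving
\[
\int_{\partial\Omega} v\, \mathcal{M}_{ij}^{(\boldsymbol{n})} u \, ds = \int_{\partial\Omega} u\, \mathcal{M}_{ji}^{(\boldsymbol{n})} v \, ds.
\]
Since $v \in \mathcal{C}^1_{\text{comp}}(\mathbb{R}^3)$, the G\"unter derivative $\mathcal{M}_{ji}^{(\boldsymbol{n})} v$ coincides with its classical pointwise expression $n_i \partial_{x_j} v - n_j \partial_{x_i} v = -\boldsymbol{\nabla} v \times \boldsymbol{n} \cdot \boldsymbol{e}_i \times \boldsymbol{e}_j$, which in turn equals $-\mathcal{M}_{ij,\mathcal{T}}^{(\boldsymbol{n})} v$ on each patch $\omega$.

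For the right-hand side, I would apply the previous lemma on each patch $\omega \in \mathcal{T}$ (since $u|_\omega$ is smooth and $v$ is globally $\mathcal{C}^1$) and sum:
\[
\sum_{\omega \in \mathcal{T}} \int_\omega v\, \mathcal{M}_{ij,\mathcal{T}}^{(\boldsymbol{n})} u \, ds = -\sum_{\omega \in \mathcal{T}} \sum_{k=1}^{3} \varepsilon_{ijk} \int_{\partial\omega_{\circlearrowleft}} uv\, dx_k \;-\; \sum_{\omega \in \mathcal{T}}\int_\omega u\, \mathcal{M}_{ij,\mathcal{T}}^{(\boldsymbol{n})} v \, ds.
\]
The key step is showing that the boundary contour integrals cancel. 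Each edge of the decomposition $\mathcal{T}$ is shared by exactly two patches (no cusps, since $\Omega^+$ is Lipschitz), and the induced orientations from the outward normal $\boldsymbol{n}$ on the two sides are opposite. Because $u \in \mathcal{C}^0(\partial\Omega)$ and $v \in \mathcal{C}^0(\mathbb{R}^3)$, the integrand $uv\, dx_k$ is single-valued across each shared edge, so the two contributions cancel. Combining with the left-hand side computation, one obtains
\[
\int_{\partial\Omega} v\, \mathcal{M}_{ij,\mathcal{T}}^{(\boldsymbol{n})} u \, ds = -\int_{\partial\Omega} u\, \mathcal{M}_{ij,\mathcal{T}}^{(\boldsymbol{n})} v \, ds = \int_{\partial\Omega} u\, \mathcal{M}_{ji}^{(\boldsymbol{n})} v \, ds = \int_{\partial\Omega} v\, \mathcal{M}_{ij}^{(\boldsymbol{n})} u \, ds,
\]
concluding the proof.

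The main obstacle I anticipate is the cancellation of the edge contributions: it requires a careful check that adjacent patches of a Lipschitz polyhedron induce opposite orientations on their shared edge (a consequence of Stokes' theorem applied consistently with the single outward normal $\boldsymbol{n}$), and that global continuity of $u$ and $v$ is genuinely needed. The rest is bookkeeping between the two integration by parts identities and the density argument. It is worth observing that only continuity of $u$ across the skeleton of $\mathcal{T}$ is used in the cancellation; this explains why $\mathcal{C}_{\mathcal{T}}(\partial\Omega)$, and not a larger piecewise smooth space, is the natural setting.
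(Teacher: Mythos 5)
Your proposal is correct and follows essentially the same route as the paper: test against smooth compactly supported $v$, use the symmetry/duality property of $\mathcal{M}_{ij}^{\left(\boldsymbol{n}\right)}$ to move the derivative onto $v$, apply the patchwise integration by parts lemma, cancel the edge contour integrals via the opposite induced orientations and the continuity of $uv$ across the skeleton, and conclude by density. The only cosmetic difference is that you test in $L^{2}\left(\partial\Omega\right)$ with $\mathcal{C}^{1}_{\text{comp}}$ functions while the paper tests in the $H^{1-s}$, $H^{s-1}$ duality with $\mathcal{C}^{\infty}_{\text{comp}}$ functions; both are legitimate since $\mathcal{M}_{ij}^{\left(\boldsymbol{n}\right)}u\in L^{2}\left(\partial\Omega\right)$ for $u\in H^{1}\left(\partial\Omega\right)$.
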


\begin{proof}
Clearly, $\mathcal{C}_{\mathcal{T}}\left(  \partial\Omega\right)  \subset
H^{1}\left(  \partial\Omega\right)  $. Hence, for $u\in\mathcal{C}%
_{\mathcal{T}}\left(  \partial\Omega\right)  $ and $v\in\mathcal{C}%
_{\text{comp}}^{\infty}\left(  \mathbb{R}^{3}\right)  $, symmetry property
(\ref{Sym_M}) yields%
\[
\left\langle v,\mathcal{M}_{ij}^{\left(  \boldsymbol{n}\right)  }%
u\right\rangle _{1-s,\partial\Omega}=\left\langle u,\mathcal{M}_{ji}^{\left(
\boldsymbol{n}\right)  }v\right\rangle _{s,\partial\Omega}=\int_{\partial
\Omega}u\mathcal{M}_{ji,\mathcal{T}}^{\left(  \boldsymbol{n}\right)  }vds.
\]
Integrating by parts, we can write
\[
\int_{\partial\Omega}u\mathcal{M}_{ji,\mathcal{T}}^{\left(  \boldsymbol{n}%
\right)  }vds=-\sum_{\omega\in\mathcal{T}}\sum_{k=1}^{3}\varepsilon_{ijk}%
\int_{\partial\omega_{\circlearrowleft}}uvdx_{k}+\int_{\partial\Omega
}v\mathcal{M}_{ij,\mathcal{T}}^{\left(  \boldsymbol{n}\right)  }uds.
\]
Since
\[
\sum_{\omega\in\mathcal{T}}\int_{\partial\omega_{\circlearrowleft}}%
uvdx_{k}=0,
\]
due to the opposite orientation on each curved edge of the non-overlapping
decomposition $\mathcal{T}$ of $\partial\Omega$, we get%
\[
\int_{\partial\Omega}v\mathcal{M}_{ij_{,}\mathcal{T}}^{\left(  \boldsymbol{n}%
\right)  }uds=\left\langle v,\mathcal{M}_{ij}^{\left(  \boldsymbol{n}\right)
}u\right\rangle _{1-s,\partial\Omega}.
\]
Formula (\ref{MijT}) then results from the density of $\mathcal{C}%
_{\text{comp}}^{\infty}\left(  \mathbb{R}^{3}\right)  $ in $H^{1-s}\left(
\partial\Omega\right)  $.
\end{proof}

\begin{remark}
The density of $\mathcal{C}_{\text{comp}}^{\infty}\left(  \mathbb{R}%
^{3}\right)  $ in $H^{s}\left(  \partial\Omega\right)  $ $(0\leq s\leq1)$, for
a Lipschitz domain $\Omega^{+}$, can be established along the same lines than
that of $\mathcal{C}_{\text{comp}}^{\infty}\left(  \mathbb{R}^{3}\right)  $ in
$L^{2}\left(  \partial\Omega\right)  $, which is proved in \cite[Th.
4.9]{Necas:12}.
\end{remark}

\section{Other expressions of the G\"{u}nter derivative matrix\label{section2}%
}

We first examine previous ways to write the G\"{u}nter derivative matrix when
$\Omega^{+}$ is of class $\mathcal{C}^{1,1}$. We then show whether or not
these expressions can be extended to a Lipschitz domain. In particular, we
recall a way to write $\mathcal{M}^{\left(  \boldsymbol{n}\right)  }$
variationally by means of a volume integral, already considered elsewhere but
not in the present context.

\subsection{Previous equivalent expressions for the G\"{u}nter derivative
matrix}

We begin with the following compact expression for the G\"{u}nter derivative
matrix given in \cite{LeLouer:14}%
\begin{equation}
\mathcal{M}^{\left(  \boldsymbol{n}\right)  }\boldsymbol{u}=\boldsymbol{\nabla
un}-\boldsymbol{n\nabla}\cdot\boldsymbol{u},\;\boldsymbol{u}\in H_{\text{loc}%
}^{2}\left(  \mathbb{R}^{3};\mathbb{C}^{3}\right)  \label{M_Comp}%
\end{equation}
which can be obtained by observing that
\[
\sum_{j=1}^{3}\mathcal{M}_{ij}^{\left(  \boldsymbol{n}\right)  }u_{j}%
=\sum_{j=1}^{3}\partial_{x_{i}}u_{j}n_{j}-n_{i}\sum_{j=1}^{3}\partial_{x_{j}%
}u_{j}.
\]
Recall that gradient $\boldsymbol{\nabla u}$ of vector $\boldsymbol{u}$ is the
matrix whose column $j$ is $\boldsymbol{\nabla}u_{j}$ $\left(  j=1,2,3\right)
.$

Probably to more clearly bring out that expression (\ref{M_Comp}) depends on
$\boldsymbol{u|}_{\partial\Omega}$ only, Le\ Lou\"{e}r \cite{LeLouer:14} used
the following way to write the gradient and the divergence on $\partial\Omega$%
\[
\boldsymbol{\nabla}u_{j}=\boldsymbol{\nabla}_{\partial\Omega}u_{j}%
+\boldsymbol{n}\partial_{\boldsymbol{n}}u_{j}\;\left(  j=1,2,3\right)
\]%
\begin{equation}
\boldsymbol{\nabla}\cdot\boldsymbol{u}=\boldsymbol{\nabla}_{\partial\Omega
}\cdot\boldsymbol{n}\times\left(  \boldsymbol{u}\times\boldsymbol{n}\right)
+2\mathcal{H}\boldsymbol{u}\cdot\boldsymbol{n}+\boldsymbol{n}\cdot
\partial_{\boldsymbol{n}}\boldsymbol{u}\text{ on }\partial\Omega,
\label{divergence}%
\end{equation}
where $\boldsymbol{\nabla}_{\partial\Omega}\cdot$ denotes the surface
divergence (see, for example, \cite[p. 72 and 75]{Nedelec:01}). We have
denoted by $2\mathcal{H}$ the mean Gaussian curvature of $\partial\Omega$,
defined as the algebraic trace $\operatorname*{tr}\mathcal{C}$ of the Gauss
curvature operator $\mathcal{C}=\boldsymbol{\nabla n}$. Formula
(\ref{divergence}) requires a domain of class $\mathcal{C}^{1,1}$ at least to
be stated. It apparently has been considered in \cite[p. 6]{LeLouer:14} for
tangential fields only (in other words, satisfying $\boldsymbol{u}%
\cdot\boldsymbol{n}=0$ on $\partial\Omega$), hence avoiding the curvature term
$2\mathcal{H}$. Defining then $\boldsymbol{\nabla}_{\partial\Omega
}\boldsymbol{u}$ as the matrix whose $j$-th column is $\boldsymbol{\nabla
}_{\partial\Omega}u_{j}$, and noting that $\boldsymbol{\nabla u}%
=\boldsymbol{\nabla}_{\partial\Omega}\boldsymbol{u}+\boldsymbol{n}\left(
\partial_{\boldsymbol{n}}\boldsymbol{u}\right)  ^{\top}$, one gets
\begin{equation}
\mathcal{M}^{\left(  \boldsymbol{n}\right)  }\boldsymbol{u}=\boldsymbol{\nabla
}_{\partial\Omega}\boldsymbol{u}-\boldsymbol{n}\left(  \boldsymbol{\nabla
}_{\partial\Omega}\cdot\boldsymbol{n}\times\left(  \boldsymbol{u}%
\times\boldsymbol{n}\right)  +2\mathcal{H}\boldsymbol{u}\cdot\boldsymbol{n}%
\right)  . \label{M_Lelouer}%
\end{equation}

There are two concerns with expression (\ref{M_Lelouer}):

\begin{itemize}
\item It involves the mean curvature $2\mathcal{H}$ of $\partial\Omega$
explicitly so that it becomes meaningless for a Lipschitz domain even when not
taking care of its derivation;

\item It does not clearly express that $\mathcal{M}^{\left(  \boldsymbol{n}%
\right)  }$ is \ a symmetric operator as stated in (\ref{Sym_M}).
\end{itemize}

With regard to the last point, one can first observe that%
\[
\boldsymbol{\nabla}_{\partial\Omega}\boldsymbol{v\ n}=\sum_{j=1}^{3}%
n_{j}\boldsymbol{\nabla}_{\partial\Omega}v_{j}=\sum_{j=1}^{3}%
\boldsymbol{\nabla}_{\partial\Omega}\left(  n_{j}v_{j}\right)  -\sum_{j=1}%
^{3}v_{j}\boldsymbol{\nabla}_{\partial\Omega}n_{j}%
\]
Since $\boldsymbol{\nabla}_{\partial\Omega}n_{j}=\boldsymbol{\nabla}%
n_{j}=\mathcal{C}_{\ast j}$, the $j$-th column of $\mathcal{C}$, and
$\mathcal{C}\boldsymbol{n}=0$, we can write
\[
\boldsymbol{\nabla}_{\partial\Omega}\boldsymbol{v\ n=\nabla}_{\partial\Omega
}\boldsymbol{v}\cdot\boldsymbol{n}-\mathcal{C}\boldsymbol{n}\times\left(
\boldsymbol{v}\times\boldsymbol{n}\right)
\]
coming, at least when $\Omega^{+}$ is $\mathcal{C}^{1,1}$-domain and
$\boldsymbol{u}\in H^{2}\left(  \mathbb{R}^{3};\mathbb{C}^{3}\right)  $, to
the following way to write the G\"{u}nter derivative matrix
\begin{equation}
\mathcal{M}^{\left(  \boldsymbol{n}\right)  }\boldsymbol{u}=\boldsymbol{\nabla
}_{\partial\Omega}\boldsymbol{u}\cdot\boldsymbol{n}-\boldsymbol{n\nabla
}_{\partial\Omega}\cdot\boldsymbol{n}\times\left(  \boldsymbol{u}%
\times\boldsymbol{n}\right)  -\mathcal{C}\left(  \boldsymbol{n}\times\left(
\boldsymbol{u}\times\boldsymbol{n}\right)  \right)  -2\mathcal{H}%
\boldsymbol{u}\cdot\boldsymbol{n\ n} \label{M_courbure}%
\end{equation}
more clearly expressing the symmetry properties stated above.

We now come to the expression of the G\"{u}nter derivative matrix most often
used to express the traction in Lam\'{e} static elasticity \cite[formula
(1.14) p. 282]{Kupradze:79}%
\begin{equation}
\mathcal{M}^{\left(  \boldsymbol{n}\right)  }\boldsymbol{u}=\partial
_{\boldsymbol{n}}\boldsymbol{u}+\boldsymbol{n}\times\boldsymbol{\nabla}%
\times\boldsymbol{u}-\boldsymbol{n\nabla}\cdot\boldsymbol{u}. \label{Kupradze}%
\end{equation}
Since the derivation of this formula does not seem to have been explicitly
carried out before, for the convenience of the reader, we show how it can be
established from the above compact expression of $\mathcal{M}^{\left(
\boldsymbol{n}\right)  }$. Writing%
\[
\mathcal{M}^{\left(  \boldsymbol{n}\right)  }\boldsymbol{u}=\left(
\boldsymbol{\nabla u}-\boldsymbol{\nabla u}^{\top}\right)  \boldsymbol{n}%
+\boldsymbol{\nabla u}^{\top}\boldsymbol{n}-\boldsymbol{n\nabla}%
\cdot\boldsymbol{u},
\]
we get%
\[
\mathcal{M}^{\left(  \boldsymbol{n}\right)  }\boldsymbol{u}=\partial
_{\boldsymbol{n}}\boldsymbol{u}+\left(  \boldsymbol{\nabla u}%
-\boldsymbol{\nabla u}^{\top}\right)  \boldsymbol{n}-\boldsymbol{n\nabla}%
\cdot\boldsymbol{u}.
\]
Now
\[
\left(  \boldsymbol{\nabla u}-\boldsymbol{\nabla u}^{\top}\right)
_{ij}=\partial_{x_{i}}u_{j}-\partial_{x_{j}}u_{i}=\sum_{l,m=1}^{3}\left(
\delta_{il}\delta_{jm}-\delta_{im}\delta_{jl}\right)  \partial_{x_{l}}u_{m}.
\]
Using the elementary writing of $\delta_{il}\delta_{jm}-\delta_{im}\delta
_{jl}$ in terms of the Levi-Civita symbol%
\[
\delta_{il}\delta_{jm}-\delta_{im}\delta_{jl}=\sum_{k=1}^{3}\varepsilon
_{ijk}\varepsilon_{lmk}%
\]
we come to%
\begin{align*}
\left(  \boldsymbol{\nabla u}-\boldsymbol{\nabla u}^{\top}\right)  _{ij}  &
=\sum_{l,m,k=1}^{3}\varepsilon_{ijk}\varepsilon_{lmk}\partial_{x_{l}}u_{m}\\
&  =\sum_{k=1}^{3}\varepsilon_{ijk}\left(  \boldsymbol{\nabla}\times
\boldsymbol{u}\right)  _{k},
\end{align*}
and thus to
\[
\left(  \left(  \boldsymbol{\nabla u}-\boldsymbol{\nabla u}^{\top}\right)
\boldsymbol{n}\right)  _{i}=\sum_{j,k=1}^{3}\varepsilon_{ijk}n_{j}\left(
\boldsymbol{\nabla}\times\boldsymbol{u}\right)  _{k}=\left(  \boldsymbol{n}%
\times\boldsymbol{\nabla}\times\boldsymbol{u}\right)  _{i}\;\left(
i=1,2,3\right)  .
\]

Form (\ref{Kupradze}) of $\mathcal{M}^{\left(  \boldsymbol{n}\right)
}\boldsymbol{u}$ gives rise to two concerns also:

\begin{itemize}
\item At least, in a direct way, it can not be evaluated from $\boldsymbol{u|}%
_{\partial\Omega}$ only;

\item Contrary to (\ref{M_courbure}), it keeps a meaning when $\Omega^{+}$ is
only a Lipschitz domain but requires that $\boldsymbol{u}\in H^{2}\left(
\mathbb{R}^{3};\mathbb{C}^{3}\right)  $ to be defined.
\end{itemize}

With regard to the first of the above two points, Darbas and Le\ Lou\"{e}r
\cite{Darbas-Lelouer:15}\ used expression (\ref{divergence}) \ for the
divergence \cite[Formula (2.5.215)]{Nedelec:01} together with the following
one for the curl
\[
\boldsymbol{n}\times\boldsymbol{\nabla}\times\boldsymbol{u}=\boldsymbol{\nabla
}_{\partial\Omega}\boldsymbol{u}\cdot\boldsymbol{n}-\mathcal{C}\boldsymbol{n}%
\times\left(  \boldsymbol{u}\times\boldsymbol{n}\right)  -\boldsymbol{n}%
\times\left(  \partial_{\boldsymbol{n}}\boldsymbol{u}\times\boldsymbol{n}%
\right)
\]
\cite[Formula (2.5.225)]{Nedelec:01} to get formula (\ref{M_courbure}) from
formula (\ref{Kupradze}).

\subsection{Expression of the G\"{u}nter derivative matrix by a volume
integral}

The trace $\partial_{\boldsymbol{n}}\boldsymbol{u}+\boldsymbol{n}%
\times\boldsymbol{\nabla}\times\boldsymbol{u}-\boldsymbol{n\nabla}%
\cdot\boldsymbol{u}$ has been considered in \cite[Proof of Lemma 2.1 p.
248]{Costabel-Dauge:99} without any reference to the G\"{u}nter derivatives.
More particularly, collecting some formulae in this paper, we readily come to
the following Green formula%
\begin{equation}%
{\displaystyle\int_{\Omega^{\pm}}}
\boldsymbol{\nabla u}\cdot\boldsymbol{\nabla v}-\boldsymbol{\nabla}%
\times\boldsymbol{u}\cdot\boldsymbol{\nabla}\times\boldsymbol{v}%
-\boldsymbol{\nabla}\cdot\boldsymbol{u\ \nabla}\cdot\boldsymbol{v\ }dx=\pm%
{\displaystyle\int_{\partial\Omega}}
\left(  \partial_{\boldsymbol{n}}\boldsymbol{u}+\boldsymbol{n}\times
\boldsymbol{\nabla}\times\boldsymbol{u}-\boldsymbol{n\nabla}\cdot
\boldsymbol{u}\right)  \cdot\boldsymbol{v}\ ds \label{M_Green}%
\end{equation}
for $\boldsymbol{u}$ and $\boldsymbol{v}$ in $H^{2}\left(  \mathbb{R}%
^{3};\mathbb{C}^{3}\right)  $ where the bilinear form underlying the scalar
product of two $3\times3$ matrices is defined by
\[
\boldsymbol{\nabla u}\cdot\boldsymbol{\nabla v}=\sum_{j=1}^{3}%
\boldsymbol{\nabla}u_{j}\cdot\boldsymbol{\nabla}v_{j}=\sum_{i,j=1}^{3}%
\partial_{x_{i}}u_{j}\partial_{x_{i}}v_{j}.
\]
It is assumed there that $\Omega^{+}$ is a curved polyhedron but the
derivation remains valid when $\Omega^{+}$ is a Lipschitz domain and for
$\boldsymbol{v}$ in $H^{1}\left(  \Omega^{\pm};\mathbb{C}^{3}\right)  $. In
the same way, the above Green formula is still holding true for
$\boldsymbol{u}\in H_{\text{loc}}^{2}\left(  \mathbb{R}^{3};\mathbb{C}%
^{3}\right)  $ and $\boldsymbol{v}\in H_{\text{comp}}^{1}\left(
\mathbb{R}^{3};\mathbb{C}^{3}\right)  $ or $\boldsymbol{u}\in H_{\text{comp}%
}^{2}\left(  \mathbb{R}^{3};\mathbb{C}^{3}\right)  $ and $\boldsymbol{v}\in
H_{\text{loc}}^{1}\left(  \mathbb{R}^{3};\mathbb{C}^{3}\right)  $. Actually,
formula (\ref{M_Green}) can also be directly deduced from an older Green
formula considered in \cite[p. 220]{Knauff-Kress:79}%
\begin{equation}%
{\displaystyle\int_{\Omega^{\pm}}}
\boldsymbol{\Delta u}\cdot\boldsymbol{v}+\boldsymbol{\nabla}\times
\boldsymbol{u}\cdot\boldsymbol{\nabla}\times\boldsymbol{v}+\boldsymbol{\nabla
}\cdot\boldsymbol{u\ \nabla}\cdot\boldsymbol{v\ }dx=\pm%
{\displaystyle\int_{\partial\Omega}}
\left(  \boldsymbol{\nabla}\times\boldsymbol{u}\times\boldsymbol{n}%
+\boldsymbol{n\nabla}\cdot\boldsymbol{u}\right)  \cdot\boldsymbol{v\ }ds.
\label{Knauff-Kress}%
\end{equation}

We then directly come to the following theorem giving the expression of the
G\"{u}nter derivative matrix in terms of a volume integral.

\begin{theorem}
Let $\Omega^{+}$ be a bounded Lipschitz domain of $\mathbb{R}^{3}$. Using the
general notation introduced above, we have
\begin{equation}
\left\langle \boldsymbol{v},\mathcal{M}^{\left(  \boldsymbol{n}\right)
}\boldsymbol{u}\right\rangle _{1/2,\partial\Omega}=\pm\int_{\Omega^{\pm}%
}\boldsymbol{\nabla u}\cdot\boldsymbol{\nabla v}-\boldsymbol{\nabla}%
\times\boldsymbol{u}\cdot\boldsymbol{\nabla}\times\boldsymbol{v}%
-\boldsymbol{\nabla}\cdot\boldsymbol{u\ \nabla}\cdot\boldsymbol{v\ }dx
\label{M_vol}%
\end{equation}
for $\boldsymbol{u}\in H_{\text{loc}}^{1}\left(  \mathbb{R}^{3};\mathbb{C}%
^{3}\right)  $ and $\boldsymbol{v}\in H_{\text{comp}}^{1}\left(
\mathbb{R}^{3};\mathbb{C}^{3}\right)  $.
\end{theorem}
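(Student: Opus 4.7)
The plan is a density argument that promotes the Green identity \eqref{M_Green}, stated just above for $\boldsymbol{u}\in H^2_{\text{loc}}$, to the $H^1_{\text{loc}}$ setting by invoking the already-established mapping properties of $\mathcal{M}^{(\boldsymbol{n})}$ from Theorem~\ref{Gunter_map}.

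First, I would verify (\ref{M_vol}) under the stronger assumption $\boldsymbol{u}\in H^2_{\text{loc}}(\mathbb{R}^3;\mathbb{C}^3)$, keeping $\boldsymbol{v}\in H^1_{\text{comp}}(\mathbb{R}^3;\mathbb{C}^3)$. In this regime the boundary integrand appearing on the right-hand side of (\ref{M_Green}) makes classical sense and is equal to $\mathcal{M}^{(\boldsymbol{n})}\boldsymbol{u}$ by Kupradze's identity (\ref{Kupradze}). Since $\mathcal{M}^{(\boldsymbol{n})}\boldsymbol{u}\in L^2(\partial\Omega;\mathbb{C}^3)$ and $\boldsymbol{v}^\pm\in H^{1/2}(\partial\Omega;\mathbb{C}^3)$, the surface integral on the right of (\ref{M_Green}) reduces to the duality pairing $\langle\boldsymbol{v},\mathcal{M}^{(\boldsymbol{n})}\boldsymbol{u}\rangle_{1/2,\partial\Omega}$, so that (\ref{M_vol}) holds for such $\boldsymbol{u}$.

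Next, I would drop the $H^2$-regularity. Pick, by local mollification, a sequence $(\boldsymbol{u}_k)\subset\mathcal{C}^\infty(\mathbb{R}^3;\mathbb{C}^3)$ with $\boldsymbol{u}_k\to\boldsymbol{u}$ in $H^1(K;\mathbb{C}^3)$ for every compact $K\subset\mathbb{R}^3$. Since (\ref{M_vol}) is already known for each $\boldsymbol{u}_k$, it suffices to pass to the limit in both sides. For the right-hand side, the compact support of $\boldsymbol{v}$ confines the integrand to the bounded region $\Omega^\pm\cap\operatorname{supp}(\boldsymbol{v})$, on which the trilinear volume form is continuous in $(\boldsymbol{\nabla}\boldsymbol{u}_k,\boldsymbol{\nabla}\boldsymbol{v})\in L^2\times L^2$, so convergence is immediate from $H^1_{\text{loc}}$-convergence of $\boldsymbol{u}_k$. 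For the left-hand side, the continuity of the trace from $H^1$ into $H^{1/2}(\partial\Omega;\mathbb{C}^3)$ yields $\boldsymbol{u}_k^\pm\to\boldsymbol{u}^\pm$ in $H^{1/2}(\partial\Omega;\mathbb{C}^3)$; Theorem~\ref{Gunter_map} applied componentwise then gives $\mathcal{M}^{(\boldsymbol{n})}\boldsymbol{u}_k\to\mathcal{M}^{(\boldsymbol{n})}\boldsymbol{u}$ in $H^{-1/2}(\partial\Omega;\mathbb{C}^3)$, and pairing with $\boldsymbol{v}^\pm\in H^{1/2}(\partial\Omega;\mathbb{C}^3)$ the duality brackets converge.

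The only genuine delicacy, and therefore the main obstacle, is the exterior case $\Omega^-$: one must verify that only the restriction of $\boldsymbol{u}$ to the bounded set $\Omega^-\cap\operatorname{supp}(\boldsymbol{v})$ (plus a fixed bounded collar of $\partial\Omega$ controlling the trace $\boldsymbol{u}^-$) enters both sides of the identity, so that $H^1_{\text{loc}}$-convergence on $\mathbb{R}^3$ is sufficient despite the unboundedness of $\Omega^-$. A secondary conceptual point is that, once $\boldsymbol{u}$ is merely in $H^1_{\text{loc}}$, the symbol $\mathcal{M}^{(\boldsymbol{n})}\boldsymbol{u}$ in the limiting identity must be read through the continuous extension provided by Theorem~\ref{Gunter_map}, and no longer through the pointwise formula (\ref{Kupradze}).
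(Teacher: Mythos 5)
Your proof is correct, but it is essentially the mirror image of the paper's. The paper keeps $\boldsymbol{u}\in H_{\text{loc}}^{1}$ throughout and instead places the extra regularity on $\boldsymbol{v}$: for $\boldsymbol{v}\in H_{\text{comp}}^{2}$ the boundary term of (\ref{M_Green}) is read, via (\ref{Kupradze}), as $\int_{\partial\Omega}\boldsymbol{u}\cdot\mathcal{M}^{\left(\boldsymbol{n}\right)}\boldsymbol{v}\,ds$, which is then converted into $\left\langle\boldsymbol{v},\mathcal{M}^{\left(\boldsymbol{n}\right)}\boldsymbol{u}\right\rangle_{1/2,\partial\Omega}$ by the symmetry identity (\ref{Sym_M}); the restriction on $\boldsymbol{v}$ is finally removed by density of $H_{\text{comp}}^{2}$ in $H_{\text{comp}}^{1}$, the left-hand side converging simply because $\mathcal{M}^{\left(\boldsymbol{n}\right)}\boldsymbol{u}$ is a fixed element of $H^{-1/2}\left(\partial\Omega;\mathbb{C}^{3}\right)$. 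You regularize $\boldsymbol{u}$ instead, which forces you to move $\mathcal{M}^{\left(\boldsymbol{n}\right)}$ along the approximating sequence; your limit passage on the left therefore rests on the full strength of Theorem~\ref{Gunter_map} at $s=1/2$ (continuity $H^{1/2}\rightarrow H^{-1/2}$) composed with the trace map, rather than on the continuity of a fixed functional. Both routes need the identification of the Kupradze trace with the extended operator for the smooth argument, and both need the localization near $\overline{\operatorname{supp}\boldsymbol{v}}$ in the exterior case that you rightly flag. The paper's variant is marginally leaner (symmetry plus trivial continuity in $\boldsymbol{v}$); yours avoids invoking (\ref{Sym_M}) at this stage but exercises the mapping theorem directly, and is a perfectly acceptable alternative.
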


\begin{proof}
In view of (\ref{M_Green}), assuming that $\boldsymbol{v}\in H_{\text{comp}%
}^{2}\left(  \mathbb{R}^{3};\mathbb{C}^{3}\right)  $, we can write%
\[
\int_{\partial\Omega}\boldsymbol{u}\cdot\mathcal{M}^{\left(  \boldsymbol{n}%
\right)  }\boldsymbol{v}\ ds=\pm\int_{\Omega^{\pm}}\boldsymbol{\nabla u}%
\cdot\boldsymbol{\nabla v}-\boldsymbol{\nabla}\times\boldsymbol{u}%
\cdot\boldsymbol{\nabla}\times\boldsymbol{v}-\boldsymbol{\nabla}%
\cdot\boldsymbol{u\ \nabla}\cdot\boldsymbol{v\ }dx.
\]
Noting then that
\[
\int_{\partial\Omega}\boldsymbol{u}\cdot\mathcal{M}^{\left(  \boldsymbol{n}%
\right)  }\boldsymbol{v}\ ds=\left\langle \boldsymbol{u},\mathcal{M}^{\left(
\boldsymbol{n}\right)  }\boldsymbol{v}\right\rangle _{1/2,\partial\Omega
}=\left\langle \boldsymbol{v},\mathcal{M}^{\left(  \boldsymbol{n}\right)
}\boldsymbol{u}\right\rangle _{1/2,\partial\Omega}%
\]
we get (\ref{M_vol}) for $\boldsymbol{v}\in H_{\text{comp}}^{2}\left(
\mathbb{R}^{3};\mathbb{C}^{3}\right)  $. The proof can then be readily
completed from the density of $H_{\text{comp}}^{2}\left(  \mathbb{R}%
^{3};\mathbb{C}^{3}\right)  $ in $H_{\text{comp}}^{1}\left(  \mathbb{R}%
^{3};\mathbb{C}^{3}\right)  $.
\end{proof}

\section{Application to the elastic wave boundary-layer
potentials\label{section3}}

In this section, we extend the regularization of elastic wave boundary-layer
potentials devised by Le~Lou\"{e}r \cite{LeLouer:14,Lelouerv1:12} for a
geometry of class $\mathcal{C}^{2}$ to the case of a Lipschitz domain. This
extension is straightforward for the traces of the single- and the
double-layer potentials. We just more explicitly bring out an intermediary
expression for the double-layer potential and an identity linking the elastic
wave boundary-layer potentials to those related to the Helmholtz equation. We
focus on the traction of the double-layer potential which requires a different
technique of proof. Meanwhile, as an application of these regularization
techniques, we show how the mapping properties of the elastic waves potentials
easily reduce to those related to the Helmholtz equation without resorting to
the general theory of boundary layer potentials for elliptic systems.

\subsection{Layer potentials of elastic waves}

For $\boldsymbol{p}\in H^{-1/2}\left(  \partial\Omega;\mathbb{C}^{3}\right)
$, the elastic wave single-layer potential can be expressed as follows
\[
S\boldsymbol{p}\left(  x\right)  =\left\langle \Gamma\left(  x,y\right)
,\boldsymbol{p}_{y}\right\rangle _{1/2,\partial\Omega}\;\left(  x\in\Omega
^{+}\cup\Omega^{-}\right)  ,
\]
in terms of the Kupradze matrix $\Gamma\left(  x,y\right)  $ whose entries are
given by \cite[p. 85]{Kupradze:79}%
\[
\Gamma_{kl}\left(  x,y\right)  =\frac{1}{\omega^{2}\varrho}\left(  \kappa
_{s}^{2}G_{\kappa_{s}}\left(  x,y\right)  \delta_{kl}+\partial_{x_{k}}%
\partial_{x_{l}}\left(  G_{\kappa_{p}}-G_{\kappa_{s}}\right)  \left(
x,y\right)  \right)  \;\left(  k,l=1,2,3\right)  .
\]
Dummy variable $y$ is used to indicate that the duality brackets link
$\boldsymbol{p}$ to the function $y\rightarrow\Gamma\left(  x,y\right)  $
indexed by parameter $x$. The notation $\left\langle \Gamma\left(  x,y\right)
,\boldsymbol{p}_{y}\right\rangle _{1/2,\partial\Omega}$ refers to the vector
whose component $k$ is given by
\[
\sum_{l=1}^{3}\left\langle \Gamma_{kl}\left(  x,y\right)  ,\left(
p_{l}\right)  _{y}\right\rangle _{1/2,\partial\Omega}%
\]
where $\left(  p_{l}\right)  _{y}$ is component $l$ of $\boldsymbol{p}_{y}$.
It is this formula that motivates the transposition in the duality brackets
$H^{1/2}\left(  \partial\Omega\right)  $, $H^{-1/2}\left(  \partial
\Omega\right)  $ adopted above. As usual
\[
\kappa_{p}=\omega\sqrt{\varrho/\left(  2\mu+\lambda\right)  }\text{ and
}\kappa_{s}=\omega\sqrt{\varrho/\mu}%
\]
are the wavenumbers corresponding to compression or P-waves and shear or
S-waves respectively. The constants $\omega$, $\varrho$, $\mu>0$ and
$\lambda\geq0$ characterize the angular frequency of the wave, the density and
the Lam\'{e} coefficients of the elastic medium respectively. Finally,
$G_{\kappa}\left(  x,y\right)  =\exp\left(  i\kappa\left\vert x-y\right\vert
\right)  /4\pi\left\vert x-y\right\vert $ is the Green kernel characterizing
the solutions of the Helmholtz equation
\[
\Delta_{y}G_{\kappa}\left(  x,y\right)  +\kappa^{2}G_{\kappa}\left(
x,y\right)  =-\delta_{x}\text{ in }\mathcal{D}^{\prime}\left(  \mathbb{R}%
^{3}\right)  ,
\]
satisfying the Sommerfeld radiation condition
\[
\lim_{\left\vert y\right\vert \rightarrow\infty}\left\vert y\right\vert
\left(  \partial_{\left\vert y\right\vert }G_{\kappa}\left(  x,y\right)
-i\kappa G_{\kappa}\left(  x,y\right)  \right)  =0,
\]
$\delta_{x}$ being the Dirac mass at $x$.

Actually, we think that it is more convenient to express $S\boldsymbol{p}$ in
terms of the Helmholtz equation single-layer potentials $V_{\kappa_{p}%
}\boldsymbol{p}$ and $V_{\kappa_{s}}\boldsymbol{p}$ characterizing the P- and
the S-waves respectively
\begin{equation}
S\boldsymbol{p}=\frac{1}{\omega^{2}\varrho}\left(  \kappa_{s}^{2}V_{\kappa
_{s}}\boldsymbol{p}+\boldsymbol{\nabla\nabla}\cdot\left(  V_{\kappa_{s}%
}-V_{\kappa_{p}}\right)  \boldsymbol{p}\right)  , \label{EW-S}%
\end{equation}
where generically the single-layer potential related to the Helmholtz equation
corresponding to the wave number $\kappa>0$ is defined by
\[
\left(  V_{\kappa}\boldsymbol{p}\left(  x\right)  \right)  _{\ell
}=\left\langle G_{\kappa}\left(  x,y\right)  ,\left(  p_{\ell}\right)
_{y}\right\rangle _{1/2,\partial\Omega},\quad x\in\Omega^{+}\cup\Omega^{-},
\]
$\left(  V_{\kappa}\boldsymbol{p}\left(  x\right)  \right)  _{\ell}$ $\left(
\ell=1,2,3\right)  $ being the $\ell$-th component of $V_{\kappa
}\boldsymbol{p}\left(  x\right)  $.

The following proposition recalls some important properties of these potentials.

\begin{proposition}
For $p\in H^{-1/2+s}\left(  \partial\Omega\right)  $, $V_{\kappa}p\in
H_{\text{loc}}^{1+s}\left(  \mathbb{R}^{3}\right)  $, $-1/2\leq s\leq1/2$. It
satisfies the Helmholtz equation $\Delta V_{\kappa}p+\kappa^{2}V_{\kappa}p=0$
in $\Omega^{+}\cup\Omega^{-}$ and the Sommerfeld radiation condition. Moreover%
\begin{equation}
\left(  V_{\kappa_{s}}-V_{\kappa_{p}}\right)  p\in H_{\text{loc}}^{3+s}\left(
\mathbb{R}^{3}\right)  . \label{reg_H3}%
\end{equation}

\end{proposition}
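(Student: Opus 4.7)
The first assertion is a standard result on the Helmholtz single-layer potential over a Lipschitz boundary, so I would just invoke it: by duality and interpolation, starting from the $s=1/2$ case ($p\in L^{2}(\partial\Omega)$ yielding $V_{\kappa}p\in H_{\text{loc}}^{3/2}(\mathbb{R}^{3})$) and the $s=-1/2$ case ($p\in H^{-1}(\partial\Omega)$ giving $V_{\kappa}p\in H_{\text{loc}}^{1}(\mathbb{R}^{3})$), one obtains $V_{\kappa}p\in H_{\text{loc}}^{1+s}(\mathbb{R}^{3})$ for $-1/2\le s\le 1/2$. That $V_{\kappa}p$ solves the Helmholtz equation in $\Omega^{+}\cup\Omega^{-}$ and satisfies the Sommerfeld radiation condition is inherited from the kernel $G_{\kappa}$ by differentiation under the duality bracket and standard estimates on $|x-y|\to\infty$.

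For the regularity gain (\ref{reg_H3}), the plan is to exploit the fact that the classical jump relations of the single-layer potential are \emph{independent of the wavenumber}. Setting $w:=(V_{\kappa_{s}}-V_{\kappa_{p}})p$, both $V_{\kappa_{s}}p$ and $V_{\kappa_{p}}p$ share the same continuous Dirichlet trace and the same Neumann jump $-p$ across $\partial\Omega$, so $[w]=0$ and $[\partial_{\boldsymbol{n}}w]=0$. No surface distribution therefore enters the distributional Laplacian of $w$, and
\begin{equation*}
-\Delta w \;=\; \kappa_{s}^{2}\,V_{\kappa_{s}}p-\kappa_{p}^{2}\,V_{\kappa_{p}}p\qquad\text{in }\mathcal{D}'(\mathbb{R}^{3}).
\end{equation*}
The right-hand side belongs to $H_{\text{loc}}^{1+s}(\mathbb{R}^{3})$ by the first part of the proposition applied separately to the two single-layer potentials. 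Interior elliptic regularity for $-\Delta$ on $\mathbb{R}^{3}$ (applied locally after multiplication by a compactly supported cut-off) then upgrades $w$ by exactly two Sobolev orders, yielding $w\in H_{\text{loc}}^{3+s}(\mathbb{R}^{3})$.

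The main technical point that I expect to need care with is the justification of the jump relations $[V_{\kappa}p]=0$ and $[\partial_{\boldsymbol{n}}V_{\kappa}p]=-p$ at the low-regularity end $s<1/2$, where $\partial_{\boldsymbol{n}}V_{\kappa}p|_{\partial\Omega^{\pm}}$ is not a classical trace but only an element of $H^{s-1/2}(\partial\Omega)$ defined variationally through a Green identity tested against functions in $H^{1-s}(\partial\Omega)$. On a Lipschitz domain this is a routine duality argument, and once it is settled the two-order elliptic bootstrap above closes the proof. An alternative, more kernel-theoretic route would be to observe directly that $G_{\kappa_{s}}-G_{\kappa_{p}}$ has no $1/r$ singularity at the diagonal (the leading terms of the Taylor expansion of $e^{i\kappa r}/r$ in $\kappa$ cancel), so the difference operator is two orders smoother than each individual potential; this is essentially what the distributional argument above is quantifying intrinsically.
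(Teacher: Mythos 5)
Your proof is correct and follows essentially the same route as the paper: the whole point is that $\Delta\left(V_{\kappa_{s}}-V_{\kappa_{p}}\right)p=\kappa_{p}^{2}V_{\kappa_{p}}p-\kappa_{s}^{2}V_{\kappa_{s}}p$ with no singular surface contribution, after which interior elliptic regularity upgrades the difference by two Sobolev orders. The only divergence is that the paper obtains this identity directly from the defining distributional equation $\Delta V_{\kappa}p+\kappa^{2}V_{\kappa}p=-p\delta_{\partial\Omega}$, valid for every $p\in H^{-1/2+s}\left(\partial\Omega\right)$ because $p\delta_{\partial\Omega}$ is a compactly supported distribution, so the surface terms cancel identically and the low-regularity justification of the jump relations that you flag as the delicate point never needs to be invoked.
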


\begin{proof}
The mapping property of $V_{\kappa}$ is a particular case of that of
single-layer potentials of more general elliptic equations (cf., for example,
\cite[Th.~1]{Costabel:88-2} or \cite[Th.~6.11]{McLean:00}). The fact that it
satisfies the Helmholtz equation and the Sommerfeld radiation condition is
stated for example in \cite[p.~117]{Nedelec:01}. The final property is
well-known. For the convenience of the reader, we prove it below. From the
definition of $V_{\kappa}$ (cf., for example, \cite[p.~201]{McLean:00}), we
can write
\[
\Delta V_{\kappa}p+\kappa^{2}V_{\kappa}p=-p\delta_{\partial\Omega}%
\]
where $p\delta_{\partial\Omega}$ is the single-layer distribution defined by%
\[
\left\langle \varphi,p\delta_{\partial\Omega}\right\rangle _{\mathcal{D}%
,\mathcal{D}^{\prime}}=\left\langle \varphi|_{\partial\Omega},p\right\rangle
_{1/2,\partial\Omega},\;\varphi\in\mathcal{D}\left(  \mathbb{R}^{3}\right)
\]
where $\left\langle \cdot,\cdot\right\rangle _{\mathcal{D},\mathcal{D}%
^{\prime}}$ is the bilinear form underlying the duality brackets
$\mathcal{D}\left(  \mathbb{R}^{3}\right)  $, $\mathcal{D}^{\prime}\left(
\mathbb{R}^{3}\right)  $. Thus%
\[
\Delta\left(  V_{\kappa_{s}}-V_{\kappa_{p}}\right)  p=\kappa_{p}^{2}%
V_{\kappa_{p}}p-\kappa_{s}^{2}V_{\kappa_{s}}p.
\]
Property (\ref{reg_H3}) is then a direct consequence of the interior
regularity for the solutions of the elliptic equations (see, for example,
\cite[Th.~4.16]{McLean:00}).
\end{proof}

The double-layer potential for elastic waves is defined for $\boldsymbol{\psi
}\in H^{1/2}\left(  \partial\Omega;\mathbb{C}^{3}\right)  $ by \cite[p.~301]%
{Kupradze:79}%
\[
K\boldsymbol{\psi}\left(  x\right)  =-\int_{\partial\Omega}\left(
T_{y}^{\left(  \boldsymbol{n}\right)  }\Gamma\left(  x,y\right)  \right)
^{\top}\boldsymbol{\psi}\left(  y\right)  \ ds_{y},\quad x\in\Omega^{+}%
\cup\Omega^{-},
\]
where $T_{y}^{\left(  \boldsymbol{n}\right)  }$ denotes the traction operator
defined for $\boldsymbol{u}\in H_{\text{loc}}^{2}\left(  \mathbb{R}%
^{3};\mathbb{C}^{3}\right)  $ by%
\[
T^{\left(  \boldsymbol{n}\right)  }\boldsymbol{u}=2\mu\partial_{\boldsymbol{n}%
}\boldsymbol{u}+\lambda\boldsymbol{n\nabla}\cdot\boldsymbol{u}+\mu
\boldsymbol{n}\times\boldsymbol{\nabla}\times\boldsymbol{u},
\]
$T_{y}^{\left(  \boldsymbol{n}\right)  }\Gamma\left(  x,y\right)  $ being the
matrix whose column $j$ is obtained by applying $T_{y}^{\left(  \boldsymbol{n}%
\right)  }$ to column $j$ of $\Gamma\left(  x,y\right)  $. The reader must
take care of the fact that the above double-layer as well as the one
associated with the Helmholtz equation
\[
N_{\kappa}\lambda\left(  x\right)  =-\int_{\partial\Omega}\partial
_{\boldsymbol{n}_{y}}G_{\kappa}\left(  x,y\right)  \lambda\left(  y\right)
ds_{y}\quad x\in\Omega^{+}\cup\Omega^{-},
\]
are of the opposite sign of those considered in the literature (cf.
\cite[p.~301]{Kupradze:79} and \cite[Formulae (2.2.19) and (1.2.2)]%
{Hsiao-Wendland:08}). We find this notation more compatible with the formulae
expressing the jump of the traction of the single-layer potential for elastic
waves and the normal derivative of the Helmholtz equation single-layer potential.

The above extension of $\mathcal{M}^{\left(  \boldsymbol{n}\right)  }$ to a
Lipschitz domain allows us to do the same for the expressions of the
double-layer potential devised by Le\ Lou\"{e}r \cite{LeLouer:14} for
$\mathcal{C}^{2}$-domains.

\begin{proposition}
The double-layer potential can be expressed as%
\begin{equation}
K\boldsymbol{\psi}=\boldsymbol{\nabla}V_{\kappa_{p}}\boldsymbol{n}%
\cdot\boldsymbol{\psi}-\boldsymbol{\nabla}\times V_{\kappa_{s}}\boldsymbol{n}%
\times\boldsymbol{\psi}-2\mu S\mathcal{M}^{\left(  \boldsymbol{n}\right)
}\boldsymbol{\psi}\text{ in }\Omega^{+}\cup\Omega^{-}. \label{EW-K-I}%
\end{equation}
Moreover, in view of the following identity%
\begin{equation}
\boldsymbol{\nabla}V_{\kappa_{s}}\boldsymbol{n}\cdot\boldsymbol{\psi
}-\boldsymbol{\nabla}\times V_{\kappa_{s}}\boldsymbol{n}\times\boldsymbol{\psi
}=N_{\kappa_{s}}\boldsymbol{\psi}+V_{\kappa_{s}}\mathcal{M}^{\left(
\boldsymbol{n}\right)  }\boldsymbol{\psi}\text{ in }\Omega^{+}\cup\Omega^{-}
\label{EW-K-half}%
\end{equation}
it can be put also in the following form%
\begin{equation}
K\boldsymbol{\psi}=N_{\kappa_{s}}\boldsymbol{\psi}+\left(  V_{\kappa_{s}}-2\mu
S\right)  \mathcal{M}^{\left(  \boldsymbol{n}\right)  }\boldsymbol{\psi
}+\boldsymbol{\nabla}\left(  V_{\kappa_{p}}-V_{\kappa_{s}}\right)
\boldsymbol{n}\cdot\boldsymbol{\psi}\text{ in }\Omega^{+}\cup\Omega^{-}.
\label{EW-K-II}%
\end{equation}

\end{proposition}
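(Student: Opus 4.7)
The plan is to derive the three formulas in order, starting from the definition of $K\boldsymbol{\psi}$ together with the expression (\ref{Kupradze}) of the traction recast as
\[
T^{(\boldsymbol{n})}\boldsymbol{v}=2\mu\,\mathcal{M}^{(\boldsymbol{n})}\boldsymbol{v}+(\lambda+2\mu)\,\boldsymbol{n}\,\boldsymbol{\nabla}\!\cdot\!\boldsymbol{v}-\mu\,\boldsymbol{n}\times\boldsymbol{\nabla}\times\boldsymbol{v}.
\]
Substituting this decomposition into $(K\boldsymbol{\psi})_i(x)=-\int_{\partial\Omega}\boldsymbol{\psi}(y)\cdot T_y^{(\boldsymbol{n})}\Gamma_{\cdot i}(x,y)\,ds_y$ splits $K\boldsymbol{\psi}$ into three boundary integrals, each of which I would then reduce to a Helmholtz-type potential. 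The $\mathcal{M}^{(\boldsymbol{n})}$ piece is the easy one: the symmetry identity (\ref{Sym_M}) transfers $\mathcal{M}_y^{(\boldsymbol{n})}$ off the columns of $\Gamma(x,\cdot)$ onto $\boldsymbol{\psi}$, and because $\Gamma^{\top}=\Gamma$ the resulting integral is exactly $-2\mu\,S\mathcal{M}^{(\boldsymbol{n})}\boldsymbol{\psi}$.

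The core kernel computation is to show, using $\nabla_y G_\kappa=-\nabla_x G_\kappa$, the Helmholtz equation $\Delta_x G_\kappa=-\kappa^2 G_\kappa$ off the diagonal, and the cancellations built into the Kupradze ansatz, that
\[
\omega^{2}\varrho\,\boldsymbol{\nabla}_{y}\!\cdot\Gamma_{\cdot i}(x,y)=-\kappa_{p}^{2}\,\partial_{x_{i}}G_{\kappa_{p}}(x,y),
\qquad
\omega^{2}\varrho\,\boldsymbol{\nabla}_{y}\!\times\Gamma_{\cdot i}(x,y)=-\kappa_{s}^{2}\,\boldsymbol{\nabla}_{x}\!\times(G_{\kappa_{s}}\boldsymbol{e}_{i}),
\]
the curl identity following from the antisymmetry of $\varepsilon_{jkm}$ against the symmetric Hessian $\partial_{x_k}\partial_{x_m}$. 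Plugging these into the two remaining integrals, using $\boldsymbol{\psi}\cdot(\boldsymbol{n}\times\boldsymbol{A})=-(\boldsymbol{n}\times\boldsymbol{\psi})\cdot\boldsymbol{A}$, and absorbing the constants $(\lambda+2\mu)\kappa_p^2=\mu\kappa_s^2=\omega^2\varrho$, produces exactly $\boldsymbol{\nabla}V_{\kappa_p}\boldsymbol{n}\cdot\boldsymbol{\psi}$ and $-\boldsymbol{\nabla}\!\times V_{\kappa_s}\boldsymbol{n}\times\boldsymbol{\psi}$. Summing the three pieces gives (\ref{EW-K-I}).

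For the auxiliary identity (\ref{EW-K-half}), I would push $\boldsymbol{\nabla}$ and $\boldsymbol{\nabla}\times$ inside the Helmholtz integrals and apply the BAC--CAB formula
\[
\boldsymbol{\nabla}_{x}G_{\kappa_{s}}\times(\boldsymbol{n}\times\boldsymbol{\psi})=\boldsymbol{n}(\boldsymbol{\nabla}_{x}G_{\kappa_{s}}\cdot\boldsymbol{\psi})-\boldsymbol{\psi}(\boldsymbol{\nabla}_{x}G_{\kappa_{s}}\cdot\boldsymbol{n}),
\]
which splits the left-hand side into three integrands. The term containing $\boldsymbol{\nabla}_{x}G_{\kappa_{s}}\cdot\boldsymbol{n}=-\partial_{\boldsymbol{n}_{y}}G_{\kappa_{s}}$ is recognized as $N_{\kappa_{s}}\boldsymbol{\psi}$ by the very definition of the Helmholtz double layer. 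The remaining combination $\int[(\boldsymbol{n}\cdot\boldsymbol{\psi})\boldsymbol{\nabla}_{x}G_{\kappa_{s}}-\boldsymbol{n}(\boldsymbol{\nabla}_{x}G_{\kappa_{s}}\cdot\boldsymbol{\psi})]\,ds_{y}$ is matched componentwise with $V_{\kappa_s}\mathcal{M}^{(\boldsymbol{n})}\boldsymbol{\psi}$ by applying the scalar symmetry $\int G_{\kappa_s}\mathcal{M}_{ij}^{(\boldsymbol{n})}\psi_j\,ds=\int\psi_j\mathcal{M}_{ji}^{(\boldsymbol{n})}G_{\kappa_s}\,ds$ from (\ref{Int_Sym})--(\ref{Sym_M}) and summing over $j$. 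Formula (\ref{EW-K-II}) then falls out mechanically by adding and subtracting $\boldsymbol{\nabla}V_{\kappa_{s}}\boldsymbol{n}\cdot\boldsymbol{\psi}$ in (\ref{EW-K-I}) and substituting (\ref{EW-K-half}).

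The principal obstacle is that the two column identities for $\boldsymbol{\nabla}_y\!\cdot\Gamma_{\cdot i}$ and $\boldsymbol{\nabla}_y\!\times\Gamma_{\cdot i}$, which are pointwise in $x\in\Omega^{+}\cup\Omega^{-}$ away from $y=x$, must interface correctly with the $H^{\pm 1/2}$ framework in which $\mathcal{M}^{(\boldsymbol{n})}\boldsymbol{\psi}$ lives on a Lipschitz $\partial\Omega$. I would settle this by first proving the identities for $\boldsymbol{\psi}\in\mathcal{C}^{\infty}(\partial\Omega;\mathbb{C}^{3})$, where every integral and every interchange of $\boldsymbol{\nabla}$ with $\int$ is classical, and then extending to $H^{1/2}(\partial\Omega;\mathbb{C}^{3})$ by density, using the continuity of $\mathcal{M}^{(\boldsymbol{n})}\colon H^{1/2}\to H^{-1/2}$ from Theorem~\ref{Gunter_map} together with the well-known continuities of $V_{\kappa}$, $N_{\kappa}$ and $S$. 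The algebraic cancellations that reduce the Kupradze-type integrands to pure $G_{\kappa_{p}}$- and $G_{\kappa_{s}}$-terms are the only delicate bookkeeping; everything else is dictated by (\ref{Sym_M}) and the triple-product identity.
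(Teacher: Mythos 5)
Your proposal is correct, and it supplies in full the computation that the paper itself does not write out: the paper's proof of this proposition is a one-line deferral to the calculations of Le~Lou\"er \cite{LeLouer:14}, so your derivation is essentially the argument being invoked, made explicit. Your route --- substituting the recast traction $T^{(\boldsymbol{n})}=2\mu\mathcal{M}^{(\boldsymbol{n})}+(\lambda+2\mu)\boldsymbol{n}\,\boldsymbol{\nabla}\cdot{}-\mu\,\boldsymbol{n}\times\boldsymbol{\nabla}\times{}$ into the columns of $\Gamma$, reducing the divergence and curl of $\Gamma_{\cdot i}$ to pure $G_{\kappa_p}$- and $G_{\kappa_s}$-kernels, transferring $\mathcal{M}_y^{(\boldsymbol{n})}$ onto $\boldsymbol{\psi}$ via (\ref{Int_Sym})--(\ref{Sym_M}) and the symmetry of $\Gamma$, and then getting (\ref{EW-K-half}) from the BAC--CAB identity with the $\boldsymbol{\nabla}_xG\cdot\boldsymbol{n}=-\partial_{\boldsymbol{n}_y}G$ term producing $N_{\kappa_s}\boldsymbol{\psi}$ --- checks out step by step, and your closing density argument is exactly what legitimizes the manipulations on a Lipschitz boundary, which is the point of the paper's extension. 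One bookkeeping caveat you should make explicit: your kernel identities $\omega^2\varrho\,\boldsymbol{\nabla}_y\cdot\Gamma_{\cdot i}=-\kappa_p^2\partial_{x_i}G_{\kappa_p}$ and $\omega^2\varrho\,\boldsymbol{\nabla}_y\times\Gamma_{\cdot i}=-\kappa_s^2\boldsymbol{\nabla}_x\times(G_{\kappa_s}\boldsymbol{e}_i)$ hold for the Kupradze matrix whose Hessian term carries $G_{\kappa_s}-G_{\kappa_p}$, i.e.\ the convention consistent with (\ref{EW-S}); the displayed definition of $\Gamma_{kl}$ earlier in the paper has $G_{\kappa_p}-G_{\kappa_s}$ there, which is inconsistent with (\ref{EW-S}) (with that sign the $G_{\kappa_s}$ contributions to $\boldsymbol{\nabla}\cdot\Gamma_{\cdot i}$ do not cancel), so you should state which convention you use. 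With that fixed, the constants $(\lambda+2\mu)\kappa_p^2=\mu\kappa_s^2=\omega^2\varrho$ absorb exactly as you claim and all three formulae follow.
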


\begin{proof}
Both the above expressions of $K\boldsymbol{\psi}$ are straightforward
extensions of calculations carried out in \cite{LeLouer:14}. Formulae
(\ref{EW-K-I}) and (\ref{EW-K-half}) are stated here in their own right
instead of being parts of the calculations.
\end{proof}

The following theorem can then be proved in an elementary fashion from the
properties of the Helmholtz equation layer potentials.

\begin{theorem}
The elastic wave layer potentials have the following mapping properties:%
\[%
\begin{array}
[c]{l}%
S:H^{-1/2+s}\left(  \partial\Omega;\mathbb{C}^{3}\right)  \rightarrow
H_{\text{loc}}^{1+s}\left(  \mathbb{R}^{3};\mathbb{C}^{3}\right) \\
K:H^{1/2+s}\left(  \partial\Omega;\mathbb{C}^{3}\right)  \rightarrow
H_{\text{loc}}^{1+s}\left(  \overline{\Omega^{\pm}};\mathbb{C}^{3}\right)
\end{array}
\text{ for }-1/2<s<1/2;
\]
The potentials $\boldsymbol{u}=S\boldsymbol{p}$ or $\boldsymbol{u}%
=K\boldsymbol{\psi}$ satisfy%
\[
\left\{
\begin{array}
[c]{l}%
\Delta^{\ast}\boldsymbol{u}+\omega^{2}\varrho\boldsymbol{u}=0\text{ in }%
\Omega^{+}\cup\Omega^{-},\\
\boldsymbol{u}\text{ fulfils the Kupradze radiation conditions \cite[p.~124]%
{Kupradze:79} }%
\end{array}
\right.
\]
where $\Delta^{\ast}$ is the elastic laplacian given by%
\[
\Delta^{\ast}\boldsymbol{u}=\mu\Delta\boldsymbol{u}+\left(  \lambda
+\mu\right)  \boldsymbol{\nabla\nabla}\cdot\boldsymbol{u}.
\]

\end{theorem}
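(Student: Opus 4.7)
The plan is to exploit the representation formulas (\ref{EW-S}) and (\ref{EW-K-II}), which express both elastic potentials entirely in terms of the Helmholtz single- and double-layer potentials $V_\kappa$, $N_\kappa$, the G\"unter derivative matrix $\mathcal{M}^{(\boldsymbol{n})}$, and elementary differential operators. Theorem~\ref{Gunter_map} controls $\mathcal{M}^{(\boldsymbol{n})}$ in the Lipschitz setting, while the proposition above gives both the mapping of $V_\kappa$ and the decisive gain of regularity (\ref{reg_H3}) for the difference $V_{\kappa_s}-V_{\kappa_p}$; the theorem should thus reduce to careful bookkeeping of Sobolev indices. For the PDE and the radiation conditions, I would decompose each potential into an irrotational P-part and a solenoidal S-part and invoke the Helmholtz equations and Sommerfeld conditions satisfied by $V_{\kappa_p}$ and $V_{\kappa_s}$ respectively.

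For $S$, plug $\boldsymbol{p}\in H^{-1/2+s}(\partial\Omega;\mathbb{C}^3)$ into (\ref{EW-S}). The first summand $\kappa_s^{2}V_{\kappa_s}\boldsymbol{p}$ lies in $H^{1+s}_{\text{loc}}(\mathbb{R}^3;\mathbb{C}^3)$ componentwise, directly from the proposition. For the second summand, (\ref{reg_H3}) places $(V_{\kappa_s}-V_{\kappa_p})\boldsymbol{p}$ in $H^{3+s}_{\text{loc}}$, so that taking $\boldsymbol{\nabla}\boldsymbol{\nabla}\cdot$ drops exactly two Sobolev orders and yields $H^{1+s}_{\text{loc}}$.

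For $K$, I would start from (\ref{EW-K-II}). The term $N_{\kappa_s}\boldsymbol{\psi}$ belongs to $H^{1+s}_{\text{loc}}(\overline{\Omega^{\pm}};\mathbb{C}^3)$ by the standard mapping of the Helmholtz double-layer potential on a Lipschitz boundary. Theorem~\ref{Gunter_map}, applied with index $1/2+s\in(0,1)$, gives $\mathcal{M}^{(\boldsymbol{n})}\boldsymbol{\psi}\in H^{-1/2+s}(\partial\Omega;\mathbb{C}^3)$, so the mapping of $V_{\kappa_s}$ from the proposition together with the mapping of $S$ proved just above handle $(V_{\kappa_s}-2\mu S)\mathcal{M}^{(\boldsymbol{n})}\boldsymbol{\psi}$. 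The genuinely delicate piece is $\boldsymbol{\nabla}(V_{\kappa_p}-V_{\kappa_s})(\boldsymbol{n}\cdot\boldsymbol{\psi})$: on a Lipschitz boundary $\boldsymbol{n}$ is only in $L^\infty(\partial\Omega)$, so $\boldsymbol{n}\cdot\boldsymbol{\psi}$ cannot be placed in $H^{1/2+s}$ but only in $L^2$. This is, in my view, the main obstacle. The resolution is exactly what (\ref{reg_H3}) was designed for: applying that statement to the scalar $\boldsymbol{n}\cdot\boldsymbol{\psi}\in L^2$ (internal index $1/2$) yields $(V_{\kappa_s}-V_{\kappa_p})(\boldsymbol{n}\cdot\boldsymbol{\psi})\in H^{7/2}_{\text{loc}}$, whose gradient lies in $H^{5/2}_{\text{loc}}\hookrightarrow H^{1+s}_{\text{loc}}$ for every $s<1/2$.

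It remains to verify the elastic wave equation and the Kupradze radiation conditions. Using the identity $\boldsymbol{\nabla}\boldsymbol{\nabla}\cdot=\Delta+\boldsymbol{\nabla}\times\boldsymbol{\nabla}\times$, I would rewrite $S\boldsymbol{p}$ as the sum of a curl-free P-contribution, essentially $-(\omega^{2}\varrho)^{-1}\boldsymbol{\nabla}\boldsymbol{\nabla}\cdot V_{\kappa_p}\boldsymbol{p}$, which is a Helmholtz solution with wavenumber $\kappa_p$, and a divergence-free S-contribution, essentially $(\omega^{2}\varrho)^{-1}\boldsymbol{\nabla}\times\boldsymbol{\nabla}\times V_{\kappa_s}\boldsymbol{p}$, a Helmholtz solution with wavenumber $\kappa_s$; the same decomposition applies to $K\boldsymbol{\psi}$ through (\ref{EW-K-II}). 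A direct computation that uses $\kappa_p^{2}(2\mu+\lambda)=\kappa_s^{2}\mu=\omega^{2}\varrho$ shows that $\Delta^{\ast}+\omega^{2}\varrho$ vanishes separately on both contributions. The Kupradze radiation condition, being the Sommerfeld condition imposed on the P- and S-parts separately, then follows at once from the Sommerfeld conditions satisfied by $V_{\kappa_p}$ and $V_{\kappa_s}$.
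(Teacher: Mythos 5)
Your argument is correct and follows essentially the same route as the paper, whose proof is only a two-line sketch ("the first part follows from Costabel's mapping results for scalar elliptic operators, the second by straightforward calculation from (\ref{EW-S}) and (\ref{EW-K-I})"): you reduce everything to the Helmholtz potentials via the representation formulas, use Theorem~\ref{Gunter_map} and the smoothing property (\ref{reg_H3}), and verify the PDE and Kupradze conditions through the P/S decomposition. Your treatment of the delicate term $\boldsymbol{\nabla}\left(V_{\kappa_{p}}-V_{\kappa_{s}}\right)\boldsymbol{n}\cdot\boldsymbol{\psi}$ (placing $\boldsymbol{n}\cdot\boldsymbol{\psi}$ only in $L^{2}$ and invoking (\ref{reg_H3})) is exactly the argument the paper spells out in the subsequent theorem on the traces of $K$, so the proposal correctly supplies the details the paper leaves implicit.
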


\begin{proof}
The first part of the proof follows from Costabel's results on mapping
properties of scalar elliptic operators \cite{Costabel:88-2}. The second one
is obtained by straightforward calculations from (\ref{EW-S}) and
(\ref{EW-K-I}).
\end{proof}

\subsection{Traces of elastic wave layer potentials}

The traces of the single- and double-layer potentials $S$ and $K$ and their
mapping properties can also be deduced from the traces of the layer potentials
of the Helmholtz equation.

\begin{theorem}
The operators defined by $\left(  S\boldsymbol{p}\right)  ^{\pm}$ for
$\boldsymbol{p}\in H^{-1/2}\left(  \partial\Omega;\mathbb{C}^{3}\right)
$\ and $\left(  K\boldsymbol{\psi}\right)  ^{\pm}$ for $\boldsymbol{\psi}\in
H^{1/2}( \partial\Omega;\allowbreak\mathbb{C}^{3}) $\ have the following
expressions
\begin{align*}
\left(  S\boldsymbol{p}\right)  ^{\pm}  &  =\frac{1}{\omega^{2}\varrho}\left(
\kappa_{s}^{2}V_{\kappa_{s}}\boldsymbol{p}+\boldsymbol{\nabla\nabla}%
\cdot\left(  V_{\kappa_{s}}-V_{\kappa_{p}}\right)  \boldsymbol{p}\right)  ,\\
\left(  K\boldsymbol{\psi}\right)  ^{\pm}  &  =\left(  N_{\kappa_{s}%
}\boldsymbol{\psi}\right)  ^{\pm}+\left(  V_{\kappa_{s}}-2\mu S\right)
\mathcal{M}^{\left(  \boldsymbol{n}\right)  }\boldsymbol{\psi}%
+\boldsymbol{\nabla}\left(  V_{\kappa_{p}}-V_{\kappa_{s}}\right)
\boldsymbol{n}\cdot\boldsymbol{\psi}.
\end{align*}
In particular, the jumps of the related potentials are given by%
\[
\left[  S\boldsymbol{p}\right]  =(S\boldsymbol{p})^{+}-(S\boldsymbol{p}%
)^{-}=0,\quad\left[  K\boldsymbol{\psi}\right]  =\boldsymbol{\psi}\text{.}%
\]
As a result, we simply refer to $\left(  S\boldsymbol{p}\right)  ^{\pm}$ by
$S\boldsymbol{p}$ below. \newline The mapping properties of these operators
are given, for $-1/2<s<1/2$, by%
\begin{align*}
S\boldsymbol{p}  &  \in H^{1/2+s}\left(  \partial\Omega;\mathbb{C}^{3}\right)
,\text{ for }\boldsymbol{p}\in H^{-1/2+s}\left(  \partial\Omega;\mathbb{C}%
^{3}\right)  ;\\
\left(  K\boldsymbol{\psi}\right)  ^{\pm}  &  \in H^{1/2+s}\left(
\partial\Omega;\mathbb{C}^{3}\right)  ,\text{ for }\boldsymbol{\psi}\in
H^{1/2+s}\left(  \partial\Omega;\mathbb{C}^{3}\right)  .
\end{align*}

\end{theorem}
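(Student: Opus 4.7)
The plan is to read off the trace expressions and the jumps termwise from the volume identities (\ref{EW-S}) and (\ref{EW-K-II}), and then chase regularity through the very same formulas for the mapping statements. First I would observe that in (\ref{EW-S}) every piece is globally defined on $\mathbb{R}^{3}$: the Helmholtz single-layer $V_{\kappa_{s}}\boldsymbol{p}$ sits in $H_{\text{loc}}^{1+s}(\mathbb{R}^{3};\mathbb{C}^{3})$ by the standard (scalar, Lipschitz) Costabel mapping applied component by component, and the extra two orders provided by (\ref{reg_H3}) let $\boldsymbol{\nabla}\boldsymbol{\nabla}\cdot(V_{\kappa_{s}}-V_{\kappa_{p}})\boldsymbol{p}$ live in $H_{\text{loc}}^{1+s}(\mathbb{R}^{3};\mathbb{C}^{3})$ as well. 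Interior and exterior traces then agree, which gives the stated expression for $(S\boldsymbol{p})^{\pm}$ and immediately $[S\boldsymbol{p}]=0$, so the common value can legitimately be written $S\boldsymbol{p}$.

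Next, for $K$ I would apply traces termwise in (\ref{EW-K-II}). The middle term $(V_{\kappa_{s}}-2\mu S)\mathcal{M}^{(\boldsymbol{n})}\boldsymbol{\psi}$ is continuous across $\partial\Omega$ because $V_{\kappa_{s}}$ is a Helmholtz single layer and $S$ has just been shown to be continuous; the last term $\boldsymbol{\nabla}(V_{\kappa_{p}}-V_{\kappa_{s}})\boldsymbol{n}\cdot\boldsymbol{\psi}$ is continuous thanks again to (\ref{reg_H3}) applied to the scalar density $\boldsymbol{n}\cdot\boldsymbol{\psi}$. Only $N_{\kappa_{s}}\boldsymbol{\psi}$ carries a jump, and the classical jump relation for the scalar Helmholtz double layer on a Lipschitz boundary, used component by component, gives $[N_{\kappa_{s}}\boldsymbol{\psi}]=\boldsymbol{\psi}$; hence $[K\boldsymbol{\psi}]=\boldsymbol{\psi}$.

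The mapping properties then follow by bounding each ingredient separately. For $S$, Costabel's scalar estimate sends $\boldsymbol{p}\in H^{-1/2+s}$ to a trace of $V_{\kappa_{s}}\boldsymbol{p}$ in $H^{1/2+s}$, while (\ref{reg_H3}) places $\boldsymbol{\nabla}\boldsymbol{\nabla}\cdot(V_{\kappa_{s}}-V_{\kappa_{p}})\boldsymbol{p}$ in $H_{\text{loc}}^{1+s}$ with trace in $H^{1/2+s}$. For $K$ I would combine: the scalar Helmholtz mapping $N_{\kappa_{s}}:H^{1/2+s}\to H^{1/2+s}$; Theorem~\ref{Gunter_map} applied componentwise to obtain $\mathcal{M}^{(\boldsymbol{n})}\boldsymbol{\psi}\in H^{-1/2+s}(\partial\Omega;\mathbb{C}^{3})$, after which both $V_{\kappa_{s}}\mathcal{M}^{(\boldsymbol{n})}\boldsymbol{\psi}$ and the already handled $S\mathcal{M}^{(\boldsymbol{n})}\boldsymbol{\psi}$ have traces in $H^{1/2+s}$; and the observation that $\boldsymbol{n}\cdot\boldsymbol{\psi}\in L^{2}(\partial\Omega)$, so (\ref{reg_H3}) makes $\boldsymbol{\nabla}(V_{\kappa_{p}}-V_{\kappa_{s}})(\boldsymbol{n}\cdot\boldsymbol{\psi})$ belong to $H_{\text{loc}}^{5/2}(\mathbb{R}^{3};\mathbb{C}^{3})$, whose trace embeds into $H^{1/2+s}$ for every $s<1/2$.

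The main obstacle I foresee is precisely this last term. On a merely Lipschitz boundary $\boldsymbol{n}$ is only in $L^{\infty}$, so multiplication by $\boldsymbol{n}$ refuses to preserve $H^{1/2+s}$ for $s>0$, and a naive bookkeeping would leak regularity here and spoil the scheme. The resolution is exactly the two-order smoothing built into (\ref{reg_H3}) for the difference of the two Helmholtz kernels: the operator $\boldsymbol{\nabla}(V_{\kappa_{p}}-V_{\kappa_{s}})$ gives back far more smoothness than is lost in the passage $\boldsymbol{\psi}\mapsto\boldsymbol{n}\cdot\boldsymbol{\psi}$, so the whole composition still lands comfortably in the target space $H^{1/2+s}(\partial\Omega;\mathbb{C}^{3})$.
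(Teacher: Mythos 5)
Your proposal is correct and takes essentially the same route as the paper: termwise traces of (\ref{EW-S}) and (\ref{EW-K-II}), with the jumps reduced to the classical Helmholtz jump relations and the single delicate term $\boldsymbol{\nabla}\left(V_{\kappa_{p}}-V_{\kappa_{s}}\right)\boldsymbol{n}\cdot\boldsymbol{\psi}$ identified and resolved exactly as in the paper, via $\boldsymbol{n}\cdot\boldsymbol{\psi}\in L^{2}\left(\partial\Omega\right)\subset H^{-1/2+s}\left(\partial\Omega\right)$, the two-order smoothing (\ref{reg_H3}), and Costabel's restricted-range trace theorem.
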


\begin{proof}
The only point requiring some care concerns the term $\boldsymbol{\nabla
}\left(  V_{\kappa_{s}}-V_{\kappa_{p}}\right)  \boldsymbol{n}\cdot
\boldsymbol{\psi}$. But since $\boldsymbol{\psi}\in H^{1/2+s}\left(
\partial\Omega;\mathbb{C}^{3}\right)  $, $\boldsymbol{n}\cdot\boldsymbol{\psi
}$ is well-defined in $L^{2}\left(  \partial\Omega\right)  $ and thus belongs
to $H^{-1/2+s}\left(  \partial\Omega\right)  $ for $-1/2<s<1/2$. Regularity
property (\ref{reg_H3}) then yields
\[
\boldsymbol{\nabla}\left(  V_{\kappa_{s}}-V_{\kappa_{p}}\right)
\boldsymbol{n}\cdot\boldsymbol{\psi}\in H_{\text{loc}}^{s+2}\left(
\mathbb{R}^{3};\mathbb{C}^{3}\right)  \text{ for }-1/2<s<1/2\text{. }%
\]
This is enough to define its trace in $H^{1/2+s}\left(  \partial
\Omega;\mathbb{C}^{3}\right)  $ from Costabel's extension of the trace theorem
for Lipschitz domains (see \cite[Lemma~3.6]{Costabel:88-2} and \cite[Th.~3.38]%
{McLean:00}).
\end{proof}

\begin{remark}
The point preventing the extension of the above mapping properties to the
end-points, $s=\pm1/2$, concerns Costabel's extension of the trace theorem
from $H_{\text{loc}}^{s}\left(  \overline{\Omega^{\pm}}\right)  $ onto
$H^{s-1/2}\left(  \partial\Omega\right)  $, valid only for $1/2<s<3/2$.
\end{remark}

\subsection{Traction of the elastic waves layer potentials}

We begin with the following classical lemma which defines the traction
$T^{\left(  \boldsymbol{n}\right)  }\boldsymbol{u}$ for $\boldsymbol{u}$ in
the following space
\[
H_{\text{loc}}^{1}\left(  \boldsymbol{\Delta}^{\ast},\overline{\Omega^{\pm}%
}\right)  =\left\{  \boldsymbol{v}\in H_{\text{loc}}^{1}\left(  \overline
{\Omega^{\pm}};\mathbb{C}^{3}\right)  ;\;\boldsymbol{\Delta}^{\ast
}\boldsymbol{v}\in L_{\text{loc}}^{2}\left(  \overline{\Omega^{\pm}%
};\mathbb{C}^{3}\right)  \right\}  .
\]
Meanwhile, we adapt previous expressions of this operator, written in terms of
the G\"{u}nter derivative matrix, to the present context of a Lipschitz geometry.

\begin{lemma}
\label{Lemma3}For $\boldsymbol{u}\in H_{\text{loc}}^{1}\left(
\boldsymbol{\Delta}^{\ast},\overline{\Omega^{\pm}}\right)  $ and
$\boldsymbol{v}\in H_{\text{comp}}^{1}\left(  \mathbb{R}^{3};\mathbb{C}%
^{3}\right)  $, the following formula defines $\left(  T^{\left(
\boldsymbol{n}\right)  }\boldsymbol{u}\right)  ^{\pm}$ in $H^{-1/2}\left(
\partial\Omega;\mathbb{C}^{3}\right)  $%
\begin{equation}%
{\displaystyle\int_{\Omega^{\pm}}}
2\mu\boldsymbol{\nabla u}\cdot\boldsymbol{\nabla v}-\mu\boldsymbol{\nabla
}\times\boldsymbol{u}\cdot\boldsymbol{\nabla}\times\boldsymbol{v}%
+\lambda\boldsymbol{\nabla}\cdot\boldsymbol{u}\ \boldsymbol{\nabla}%
\cdot\boldsymbol{v}+\boldsymbol{\Delta}^{\ast}\boldsymbol{u}\cdot
\boldsymbol{v\ }dx=\left\langle \boldsymbol{v},\pm\left(  T^{\left(
\boldsymbol{n}\right)  }\boldsymbol{u}\right)  ^{\pm}\right\rangle
_{1/2,-1/2}.\label{traction-var}%
\end{equation}
Moreover, the traction $\left(  T^{\left(  \boldsymbol{n}\right)
}\boldsymbol{u}\right)  ^{\pm}$ can also be expressed in either of the two
following forms
\begin{equation}%
{\displaystyle\int_{\Omega^{\pm}}}
\mu\boldsymbol{\nabla}\times\boldsymbol{u}\cdot\boldsymbol{\nabla}%
\times\boldsymbol{v}+\left(  \lambda+2\mu\right)  \boldsymbol{\nabla}%
\cdot\boldsymbol{u}\ \boldsymbol{\nabla}\cdot\boldsymbol{v}+\boldsymbol{\Delta
}^{\ast}\boldsymbol{u}\cdot\boldsymbol{v\ }dx+\left\langle \boldsymbol{v}%
,\pm2\mu\mathcal{M}^{\left(  \boldsymbol{n}\right)  }\boldsymbol{u}^{\pm
}\right\rangle _{1/2,\partial\Omega}=\left\langle \boldsymbol{v},\pm\left(
T^{\left(  \boldsymbol{n}\right)  }\boldsymbol{u}\right)  ^{\pm}\right\rangle
_{1/2,\partial\Omega},\label{traction-M1}%
\end{equation}%
\begin{equation}%
{\displaystyle\int_{\Omega^{\pm}}}
\mu\boldsymbol{\nabla u}\cdot\boldsymbol{\nabla v}+\left(  \lambda+\mu\right)
\boldsymbol{\nabla}\cdot\boldsymbol{u}\ \boldsymbol{\nabla}\cdot
\boldsymbol{v}+\boldsymbol{\Delta}^{\ast}\boldsymbol{u}\cdot\boldsymbol{v\ }%
dx+\left\langle \boldsymbol{v},\pm\mu\mathcal{M}^{\left(  \boldsymbol{n}%
\right)  }\boldsymbol{u}^{\pm}\right\rangle _{1/2,\partial\Omega}=\left\langle
\boldsymbol{v},\pm\left(  T^{\left(  \boldsymbol{n}\right)  }\boldsymbol{u}%
\right)  ^{\pm}\right\rangle _{1/2,\partial\Omega}.\label{traction-M2}%
\end{equation}

\end{lemma}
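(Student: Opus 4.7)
My plan is structured around three Green-type identities tailored to the elastic Laplacian $\boldsymbol{\Delta}^{\ast}=\mu\boldsymbol{\Delta}+(\lambda+\mu)\boldsymbol{\nabla}\boldsymbol{\nabla}\cdot=(\lambda+2\mu)\boldsymbol{\nabla}\boldsymbol{\nabla}\cdot-\mu\boldsymbol{\nabla}\times\boldsymbol{\nabla}\times$. I first establish (\ref{traction-var}) for smooth data by elementary integrations by parts, then promote it by density to the full statement of the lemma, and finally derive (\ref{traction-M1}) and (\ref{traction-M2}) by purely algebraic manipulations that invoke the volume representation (\ref{M_vol}) of the G\"unter derivative matrix.

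For the first step, I take $\boldsymbol{u},\boldsymbol{v}\in\mathcal{C}^{\infty}_{\text{comp}}(\mathbb{R}^{3};\mathbb{C}^{3})$ and apply standard vector Green formulae to each of $2\mu\int_{\Omega^{\pm}}\boldsymbol{\nabla}\boldsymbol{u}\cdot\boldsymbol{\nabla}\boldsymbol{v}\,dx$, $-\mu\int_{\Omega^{\pm}}\boldsymbol{\nabla}\times\boldsymbol{u}\cdot\boldsymbol{\nabla}\times\boldsymbol{v}\,dx$ and $\lambda\int_{\Omega^{\pm}}\boldsymbol{\nabla}\cdot\boldsymbol{u}\,\boldsymbol{\nabla}\cdot\boldsymbol{v}\,dx$. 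The three boundary contributions that appear are respectively $\pm 2\mu\int\partial_{\boldsymbol{n}}\boldsymbol{u}\cdot\boldsymbol{v}\,ds$, $\pm\mu\int(\boldsymbol{n}\times\boldsymbol{\nabla}\times\boldsymbol{u})\cdot\boldsymbol{v}\,ds$ and $\pm\lambda\int\boldsymbol{\nabla}\cdot\boldsymbol{u}\,\boldsymbol{n}\cdot\boldsymbol{v}\,ds$, whose sum equals $\pm\int_{\partial\Omega}T^{(\boldsymbol{n})}\boldsymbol{u}\cdot\boldsymbol{v}\,ds$ by the very definition of the traction; in parallel, the remaining volume residues combine, via the identity above for $\boldsymbol{\Delta}^{\ast}$, into $-\int_{\Omega^{\pm}}\boldsymbol{\Delta}^{\ast}\boldsymbol{u}\cdot\boldsymbol{v}\,dx$, exactly compensating the $\boldsymbol{\Delta}^{\ast}\boldsymbol{u}\cdot\boldsymbol{v}$ term already present under the integral in (\ref{traction-var}).

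To promote the identity to rough data, I observe that its left-hand side is jointly continuous in $(\boldsymbol{u},\boldsymbol{v})\in H^{1}_{\text{loc}}(\boldsymbol{\Delta}^{\ast},\overline{\Omega^{\pm}})\times H^{1}_{\text{comp}}(\mathbb{R}^{3};\mathbb{C}^{3})$, since each volume term pairs first-order derivatives in $L^{2}\times L^{2}$ and the $\boldsymbol{\Delta}^{\ast}\boldsymbol{u}\cdot\boldsymbol{v}$ term is continuous in the graph norm. By density of $\mathcal{C}^{\infty}$ in each factor, which is standard for Lipschitz domains, the identity persists. To extract an element of $H^{-1/2}(\partial\Omega;\mathbb{C}^{3})$, I test against $\boldsymbol{v}\in\mathcal{D}(\Omega^{\pm};\mathbb{C}^{3})$: the left-hand side then collapses distributionally to $\langle\boldsymbol{\Delta}^{\ast}\boldsymbol{u}-\boldsymbol{\Delta}^{\ast}\boldsymbol{u},\boldsymbol{v}\rangle=0$, so the linear form induced on $\boldsymbol{v}$ factors through $\boldsymbol{v}|_{\partial\Omega}\in H^{1/2}(\partial\Omega;\mathbb{C}^{3})$. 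Costabel's trace lifting for Lipschitz domains \cite{Costabel:88-2} then identifies it with an element $(T^{(\boldsymbol{n})}\boldsymbol{u})^{\pm}$ of the dual $H^{-1/2}(\partial\Omega;\mathbb{C}^{3})$; this is the single point that requires genuine care.

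Formulas (\ref{traction-M1}) and (\ref{traction-M2}) then follow by pure algebra. Subtracting the left-hand side of (\ref{traction-M1}) from that of (\ref{traction-var}) cancels the $\boldsymbol{\Delta}^{\ast}\boldsymbol{u}\cdot\boldsymbol{v}$ terms and collapses the $\lambda$ and $(\lambda+2\mu)$ contributions against one another, leaving exactly $2\mu\int_{\Omega^{\pm}}[\boldsymbol{\nabla}\boldsymbol{u}\cdot\boldsymbol{\nabla}\boldsymbol{v}-\boldsymbol{\nabla}\times\boldsymbol{u}\cdot\boldsymbol{\nabla}\times\boldsymbol{v}-\boldsymbol{\nabla}\cdot\boldsymbol{u}\,\boldsymbol{\nabla}\cdot\boldsymbol{v}]\,dx$, which by (\ref{M_vol}) equals $\pm 2\mu\langle\boldsymbol{v},\mathcal{M}^{(\boldsymbol{n})}\boldsymbol{u}^{\pm}\rangle_{1/2,\partial\Omega}$; rearranging yields (\ref{traction-M1}). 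The very same computation performed with multiplier $\mu$ in place of $2\mu$ produces (\ref{traction-M2}). The structural reason is transparent: (\ref{traction-var}), (\ref{traction-M1}) and (\ref{traction-M2}) differ pairwise by integer multiples of the variational expression (\ref{M_vol}) for $\mathcal{M}^{(\boldsymbol{n})}$.
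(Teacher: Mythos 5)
Your proposal is correct and follows essentially the same route as the paper: establish (\ref{traction-var}) for smooth fields by integration by parts (the paper packages the curl/divergence part via the Knauff--Kress identity (\ref{Knauff-Kress}) rather than doing the three Green formulae separately, but the computation is identical), extend to $H^{1}_{\text{loc}}(\boldsymbol{\Delta}^{\ast},\overline{\Omega^{\pm}})$ by density, continuity and the duality/trace-lifting argument, and obtain (\ref{traction-M1})--(\ref{traction-M2}) as recasts of (\ref{traction-var}) via the volume expression (\ref{M_vol}) of $\mathcal{M}^{(\boldsymbol{n})}$. Your observation that the three identities differ pairwise by multiples of (\ref{M_vol}) is exactly the paper's "simple recast," and your explicit factoring of the left-hand side through $\boldsymbol{v}|_{\partial\Omega}$ makes the duality step slightly more transparent than the paper's citation of standard arguments.
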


\begin{proof}
Identity (\ref{traction-var}) is obtained from identity (\ref{Knauff-Kress})
and usual Green formula for $\boldsymbol{u}\in H_{\text{loc}}^{2}\left(
\overline{\Omega^{\pm}};\mathbb{C}^{3}\right)  $ by putting the left-hand side
in the form
\[%
\begin{array}
[c]{l}%
{\displaystyle\int\nolimits_{\Omega^{\pm}}}
2\mu\boldsymbol{\nabla u}\cdot\boldsymbol{\nabla v}-\mu\boldsymbol{\nabla
}\times\boldsymbol{u}\cdot\boldsymbol{\nabla}\times\boldsymbol{v}%
+\lambda\boldsymbol{\nabla}\cdot\boldsymbol{u}\ \boldsymbol{\nabla}%
\cdot\boldsymbol{v}+\boldsymbol{\Delta}^{\ast}\boldsymbol{u}\cdot
\boldsymbol{v\ }dx=%
{\displaystyle\int\nolimits_{\Omega^{\pm}}}
2\mu\left(  \boldsymbol{\nabla u}\cdot\boldsymbol{\nabla v}+\boldsymbol{\Delta
u}\cdot\boldsymbol{v}\right)  dx\\
\quad+%
{\displaystyle\int\nolimits_{\Omega^{\pm}}}
\left(  \lambda+\mu\right)  \left(  \boldsymbol{\nabla\nabla}\cdot
\boldsymbol{u}\ \cdot\boldsymbol{v}+\boldsymbol{\nabla}\cdot\boldsymbol{u}%
\ \boldsymbol{\nabla}\cdot\boldsymbol{v}\right)  dx-%
{\displaystyle\int\nolimits_{\Omega^{\pm}}}
\mu\left(  \boldsymbol{\Delta u}\cdot\boldsymbol{v}+\boldsymbol{\nabla}%
\times\boldsymbol{u}\cdot\boldsymbol{\nabla}\times\boldsymbol{v}%
+\boldsymbol{\nabla}\cdot\boldsymbol{u}\ \boldsymbol{\nabla}\cdot
\boldsymbol{v}\right)  dx.
\end{array}
\]
It is extended to $\boldsymbol{u}\in H_{\text{loc}}^{1}\left(
\boldsymbol{\Delta}^{\ast},\overline{\Omega^{\pm}}\right)  $ by usual density,
continuity and duality arguments (cf., for example, \cite{Grisvard:82} for the
case of the Laplace operator and \cite{Costabel:88-2,McLean:00} for more
general elliptic problems). Formulae (\ref{traction-M1}) and
(\ref{traction-M2}) are then a simple recast of this identity from volume
expression (\ref{M_vol}) of $\mathcal{M}^{\left(  \boldsymbol{n}\right)  }$.
\end{proof}

\begin{remark}
It is worth noting the following two important features:

\begin{enumerate}
\item A first part of the integrand in (\ref{traction-var}) is exactly the
(opposite) of the density of virtual work
\[
2\mu\boldsymbol{\nabla u}\cdot\boldsymbol{\nabla v}-\mu\boldsymbol{\nabla
}\times\boldsymbol{u}\cdot\boldsymbol{\nabla}\times\boldsymbol{v}%
+\lambda\boldsymbol{\nabla}\cdot\boldsymbol{u}\ \boldsymbol{\nabla}%
\cdot\boldsymbol{v=\Sigma u}\cdot\boldsymbol{Ev}%
\]
done by the internal stresses
\[
\boldsymbol{\Sigma u}=2\mu\boldsymbol{Eu}+\lambda\boldsymbol{\nabla}%
\cdot\boldsymbol{u}\mathbb{I}_{3}%
\]
under the virtual displacement $\boldsymbol{v}$; $\boldsymbol{Eu}=(1/2)\left(
\boldsymbol{\nabla u}+\boldsymbol{\nabla u}^{\top}\right)  $ and
$\mathbb{I}_{3}$ are the strain tensor and the $3\times3$ unit matrix respectively;

\item When $\boldsymbol{u}\in H_{\text{loc}}^{2}\left(  \overline{\Omega^{\pm
}};\mathbb{C}^{3}\right)  $, usual Green formula yields the well-known
representation formulae of the traction in terms of $\mathcal{M}^{\left(
\boldsymbol{n}\right)  }$ (cf. \cite[Formula (V, 1.16)]{Kupradze:79} and
\cite[Formula (2.2.35)]{Hsiao-Wendland:08})%
\begin{equation}
T^{\left(  \boldsymbol{n}\right)  }\boldsymbol{u}=2\mu\mathcal{M}^{\left(
\boldsymbol{n}\right)  }\boldsymbol{u}-\mu\boldsymbol{n}\times
\boldsymbol{\nabla}\times\boldsymbol{u}+\left(  \lambda+2\mu\right)
\boldsymbol{n\nabla}\cdot\boldsymbol{u} \label{Tn-M-I}%
\end{equation}%
\begin{equation}
T^{\left(  \boldsymbol{n}\right)  }\boldsymbol{u}=\mu\mathcal{M}^{\left(
\boldsymbol{n}\right)  }\boldsymbol{u}+\mu\partial_{\boldsymbol{n}%
}\boldsymbol{u}+\left(  \lambda+\mu\right)  \boldsymbol{n\nabla}%
\cdot\boldsymbol{u}. \label{Tn-M-II}%
\end{equation}

\end{enumerate}
\end{remark}

We can thus establish the representation of the traction of the single-layer
potential in terms of the traces of the Helmholtz equation potentials.

\begin{theorem}
The operators defined by $\left(  T^{\left(  \boldsymbol{n}\right)
}S\boldsymbol{p}\right)  ^{\pm}$ for $\boldsymbol{p}\in H^{-1/2}\left(
\partial\Omega;\mathbb{C}^{3}\right)  $ have the following representation%
\begin{equation}
\left(  T^{\left(  \boldsymbol{n}\right)  }S\boldsymbol{p}\right)  ^{\pm
}=\left(  \partial_{\boldsymbol{n}}V_{\kappa_{s}}\boldsymbol{p}\right)  ^{\pm
}+\boldsymbol{n\nabla}\cdot\left(  V_{\kappa_{p}}-V_{\kappa_{s}}\right)
\boldsymbol{p}-\mathcal{M}^{\left(  \boldsymbol{n}\right)  }\left(
V_{\kappa_{s}}-2\mu S\right)  \boldsymbol{p}. \label{traction-S}%
\end{equation}
In particular, the jump of the traction of the single-layer potential is given
by%
\[
\left[  T^{\left(  \boldsymbol{n}\right)  }S\boldsymbol{p}\right]
=\boldsymbol{p}.
\]
The mapping properties of these operators can be stated as follows%
\[
\left(  T^{\left(  \boldsymbol{n}\right)  }S\boldsymbol{p}\right)  ^{\pm}\in
H^{-1/2+s}\left(  \partial\Omega;\mathbb{C}^{3}\right)  \text{, }%
\]
for $\boldsymbol{p}\in H^{-1/2+s}\left(  \partial\Omega;\mathbb{C}^{3}\right)
$ and $-1/2<s<1/2.$
\end{theorem}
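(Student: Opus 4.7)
The plan is to establish the identity (\ref{traction-S}) first for smooth densities, where the classical manipulations are justified, and then extend by density using the mapping properties of each constituent on the right-hand side. Fix $\boldsymbol{p}$ in a dense smooth subspace of $H^{-1/2}(\partial\Omega;\mathbb{C}^{3})$ for which the classical expression (\ref{Tn-M-I}) applies to $\boldsymbol{u} = S\boldsymbol{p}$:
\[
T^{(\boldsymbol{n})}S\boldsymbol{p} = 2\mu\mathcal{M}^{(\boldsymbol{n})}S\boldsymbol{p} - \mu\boldsymbol{n}\times\boldsymbol{\nabla}\times S\boldsymbol{p} + (\lambda+2\mu)\boldsymbol{n}\boldsymbol{\nabla}\cdot S\boldsymbol{p}.
\]
The first task is then to use the decomposition (\ref{EW-S}) to rewrite the divergence and curl of $S\boldsymbol{p}$ in terms of the scalar Helmholtz single-layers $V_{\kappa_{s}}\boldsymbol{p}$ and $V_{\kappa_{p}}\boldsymbol{p}$.

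Since each component of $V_{\kappa_{s}}\boldsymbol{p}$ and $V_{\kappa_{p}}\boldsymbol{p}$ satisfies the Helmholtz equation in $\Omega^{+}\cup\Omega^{-}$, one has $\Delta\,\boldsymbol{\nabla}\cdot V_{\kappa}\boldsymbol{p} = -\kappa^{2}\boldsymbol{\nabla}\cdot V_{\kappa}\boldsymbol{p}$ and $\boldsymbol{\nabla}\times\boldsymbol{\nabla}f = 0$. Combined with $\kappa_{p}^{2}(\lambda+2\mu) = \omega^{2}\varrho = \mu\kappa_{s}^{2}$, this directly yields
\[
\boldsymbol{\nabla}\cdot S\boldsymbol{p} = \frac{1}{\lambda+2\mu}\,\boldsymbol{\nabla}\cdot V_{\kappa_{p}}\boldsymbol{p}, \qquad \boldsymbol{\nabla}\times S\boldsymbol{p} = \frac{1}{\mu}\,\boldsymbol{\nabla}\times V_{\kappa_{s}}\boldsymbol{p}.
\]
Plugging these into the previous display reduces the traction to
\[
T^{(\boldsymbol{n})}S\boldsymbol{p} = 2\mu\mathcal{M}^{(\boldsymbol{n})}S\boldsymbol{p} - \boldsymbol{n}\times\boldsymbol{\nabla}\times V_{\kappa_{s}}\boldsymbol{p} + \boldsymbol{n}\boldsymbol{\nabla}\cdot V_{\kappa_{p}}\boldsymbol{p}.
\]

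At this point I would invoke the Kupradze identity (\ref{Kupradze}) applied to $V_{\kappa_{s}}\boldsymbol{p}$, which rearranges to $-\boldsymbol{n}\times\boldsymbol{\nabla}\times V_{\kappa_{s}}\boldsymbol{p} = \partial_{\boldsymbol{n}}V_{\kappa_{s}}\boldsymbol{p} - \mathcal{M}^{(\boldsymbol{n})}V_{\kappa_{s}}\boldsymbol{p} - \boldsymbol{n}\boldsymbol{\nabla}\cdot V_{\kappa_{s}}\boldsymbol{p}$. Substituting and grouping $2\mu\mathcal{M}^{(\boldsymbol{n})}S\boldsymbol{p} - \mathcal{M}^{(\boldsymbol{n})}V_{\kappa_{s}}\boldsymbol{p} = -\mathcal{M}^{(\boldsymbol{n})}(V_{\kappa_{s}} - 2\mu S)\boldsymbol{p}$, together with $\boldsymbol{n}\boldsymbol{\nabla}\cdot V_{\kappa_{p}}\boldsymbol{p} - \boldsymbol{n}\boldsymbol{\nabla}\cdot V_{\kappa_{s}}\boldsymbol{p} = \boldsymbol{n}\boldsymbol{\nabla}\cdot(V_{\kappa_{p}} - V_{\kappa_{s}})\boldsymbol{p}$, produces precisely the formula (\ref{traction-S}).

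To extend the identity to $\boldsymbol{p}\in H^{-1/2+s}(\partial\Omega;\mathbb{C}^{3})$ for $-1/2<s<1/2$, I would check that every term on the right-hand side is a bounded operator from $H^{-1/2+s}$ into $H^{-1/2+s}$: the normal trace $(\partial_{\boldsymbol{n}}V_{\kappa_{s}}\boldsymbol{p})^{\pm}$ by Costabel's Neumann-trace theory for scalar Helmholtz potentials; the divergence term $\boldsymbol{n}\boldsymbol{\nabla}\cdot(V_{\kappa_{p}}-V_{\kappa_{s}})\boldsymbol{p}$ via the enhanced regularity (\ref{reg_H3}) and Costabel's trace theorem for Lipschitz domains; and the Günter term via Theorem \ref{Gunter_map} applied to $(V_{\kappa_{s}}-2\mu S)\boldsymbol{p}\in H^{1+s}_{\text{loc}}(\mathbb{R}^{3};\mathbb{C}^{3})$, whose trace lies in $H^{1/2+s}(\partial\Omega;\mathbb{C}^{3})$. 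The left-hand side likewise extends by Lemma \ref{Lemma3}, so the identity passes to the limit by density. The jump formula then follows immediately, since all terms on the right-hand side except $\partial_{\boldsymbol{n}}V_{\kappa_{s}}\boldsymbol{p}$ are single-valued across $\partial\Omega$ (being traces of $H^{3+s}_{\text{loc}}$ functions or of continuous operators acting on the continuous object $(V_{\kappa_{s}}-2\mu S)\boldsymbol{p}$), while $[\partial_{\boldsymbol{n}}V_{\kappa_{s}}\boldsymbol{p}]=\boldsymbol{p}$ is the classical Helmholtz single-layer jump. The main obstacle I foresee is this density step: the classical formula (\ref{Tn-M-I}) requires $\boldsymbol{u}\in H^{2}_{\text{loc}}(\overline{\Omega^{\pm}})$, a regularity not automatic for $S\boldsymbol{p}$ on a Lipschitz boundary. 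The cleanest way around it is to recast the derivation variationally via identity (\ref{traction-M2}) of Lemma \ref{Lemma3}, which only demands $\boldsymbol{u}\in H^{1}_{\text{loc}}(\boldsymbol{\Delta}^{\ast},\overline{\Omega^{\pm}})$ and thus is directly available for $\boldsymbol{p}\in H^{-1/2}$; alternatively, one may approximate the Lipschitz geometry by smooth domains and pass to the limit using the uniform bounds already built above.
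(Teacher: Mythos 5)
Your proposal is correct and follows essentially the same route as the paper: the same identities $\mu\boldsymbol{\nabla}\times S\boldsymbol{p}=\boldsymbol{\nabla}\times V_{\kappa_{s}}\boldsymbol{p}$ and $\left(\lambda+2\mu\right)\boldsymbol{\nabla}\cdot S\boldsymbol{p}=\boldsymbol{\nabla}\cdot V_{\kappa_{p}}\boldsymbol{p}$, the same regrouping into the three terms of (\ref{traction-S}), and the same rigorous justification via the variational traction identity of Lemma~\ref{Lemma3} together with the volume expression (\ref{M_vol}) of $\mathcal{M}^{\left(\boldsymbol{n}\right)}$ --- which is precisely how the paper sidesteps the $H^{2}$-regularity obstacle you correctly flag at the end. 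One small slip: the variational identity matching your pointwise computation (based on (\ref{Tn-M-I})) is (\ref{traction-M1}), not (\ref{traction-M2}).
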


\begin{proof}
Keeping the general notation of Lemma\ \ref{Lemma3}, we use representation
(\ref{traction-M1}) of the traction to write%
\[
\left\langle \boldsymbol{v},\left(  T^{\left(  \boldsymbol{n}\right)
}S\boldsymbol{p}\right)  ^{\pm}\right\rangle _{1/2,\partial\Omega
}=\left\langle \boldsymbol{v},2\mu\mathcal{M}^{\left(  \boldsymbol{n}\right)
}S\boldsymbol{p}\right\rangle _{1/2,\partial\Omega}\pm%
{\displaystyle\int_{\Omega^{\pm}}}
\mu\boldsymbol{\nabla}\times S\boldsymbol{p}\cdot\boldsymbol{\nabla}%
\times\boldsymbol{v}+\left(  \lambda+2\mu\right)  \boldsymbol{\nabla}\cdot
S\boldsymbol{p}\ \boldsymbol{\nabla}\cdot\boldsymbol{v}+\boldsymbol{\Delta
}^{\ast}S\boldsymbol{p}\cdot\boldsymbol{v\ }dx.
\]
Noting that $\mu\boldsymbol{\nabla}\times S\boldsymbol{p}=\boldsymbol{\nabla
}\times V_{\kappa_{s}}\boldsymbol{p}$, $\left(  \lambda+2\mu\right)
\boldsymbol{\nabla}\cdot S\boldsymbol{p}=\boldsymbol{\nabla}\cdot
V_{\kappa_{p}}\boldsymbol{p}$, and $\boldsymbol{\Delta}^{\ast}S\boldsymbol{p}%
=-\omega^{2}\varrho S\boldsymbol{p}$ in $\Omega^{\pm}$, we get%
\[
\left\langle \boldsymbol{v},\left(  T^{\left(  \boldsymbol{n}\right)
}S\boldsymbol{p}\right)  ^{\pm}\right\rangle _{1/2,\partial\Omega
}=\left\langle \boldsymbol{v},2\mu\mathcal{M}^{\left(  \boldsymbol{n}\right)
}S\boldsymbol{p}\right\rangle _{1/2,\partial\Omega}\pm%
{\displaystyle\int_{\Omega^{\pm}}}
\boldsymbol{\nabla}\times V_{\kappa_{s}}\boldsymbol{p}\cdot\boldsymbol{\nabla
}\times\boldsymbol{v}+\boldsymbol{\nabla}\cdot V_{\kappa_{p}}\boldsymbol{p}%
\ \boldsymbol{\nabla}\cdot\boldsymbol{v}-\omega^{2}\varrho S\boldsymbol{p}%
\cdot\boldsymbol{v\ }dx.
\]
or in a more explicit form%
\[%
\begin{array}
[c]{l}%
\left\langle \boldsymbol{v},\left(  T^{\left(  \boldsymbol{n}\right)
}S\boldsymbol{p}\right)  ^{\pm}\right\rangle _{1/2,\partial\Omega
}=\left\langle \boldsymbol{v},2\mu\mathcal{M}^{\left(  \boldsymbol{n}\right)
}S\boldsymbol{p}\right\rangle _{1/2,\partial\Omega}\\
\qquad\pm%
{\displaystyle\int_{\Omega^{\pm}}}
\boldsymbol{\nabla}\times V_{\kappa_{s}}\boldsymbol{p}\cdot\boldsymbol{\nabla
}\times\boldsymbol{v}+\boldsymbol{\nabla}\cdot V_{\kappa_{p}}\boldsymbol{p}%
\ \boldsymbol{\nabla}\cdot\boldsymbol{v\ }dx\pm%
{\displaystyle\int_{\Omega^{\pm}}}
\left(  -\kappa_{s}^{2}V_{\kappa_{s}}\boldsymbol{p}-\boldsymbol{\nabla
\boldsymbol{\nabla}\cdot}V_{\kappa_{s}}\boldsymbol{\boldsymbol{p}%
}+\boldsymbol{\nabla\boldsymbol{\nabla}\cdot}V_{\kappa_{p}}%
\boldsymbol{\boldsymbol{p}}\right)  \cdot\boldsymbol{v\ }dx.
\end{array}
\]
Reorganizing the integrands, we come to%
\[%
\begin{array}
[c]{l}%
\left\langle \boldsymbol{v},\left(  T^{\left(  \boldsymbol{n}\right)
}S\boldsymbol{p}\right)  ^{\pm}\right\rangle _{1/2,\partial\Omega
}=\left\langle \boldsymbol{v},2\mu\mathcal{M}^{\left(  \boldsymbol{n}\right)
}S\boldsymbol{p}\right\rangle _{1/2,\partial\Omega}\\
\qquad\pm%
{\displaystyle\int_{\Omega^{\pm}}}
\boldsymbol{\nabla}\times V_{\kappa_{s}}\boldsymbol{p}\cdot\boldsymbol{\nabla
}\times\boldsymbol{v}+\boldsymbol{\nabla}\cdot V_{\kappa_{s}}\boldsymbol{p}%
\ \boldsymbol{\nabla}\cdot\boldsymbol{v}+\boldsymbol{\Delta}V_{\kappa_{s}%
}\boldsymbol{\boldsymbol{p}}\cdot\boldsymbol{v\ }dx\\
\qquad\pm%
{\displaystyle\int_{\Omega^{\pm}}}
\boldsymbol{\nabla\boldsymbol{\nabla}}\cdot\left(  V_{\kappa_{p}}%
-V_{\kappa_{s}}\right)  \boldsymbol{\boldsymbol{p}}\cdot\boldsymbol{v}%
+\boldsymbol{\boldsymbol{\nabla}}\cdot\left(  V_{\kappa_{p}}-V_{\kappa_{s}%
}\right)  \boldsymbol{\boldsymbol{p\nabla}}\cdot\boldsymbol{v\ }dx,
\end{array}
\]
which can also be written as
\[%
\begin{array}
[c]{l}%
\left\langle \boldsymbol{v},\left(  T^{\left(  \boldsymbol{n}\right)
}S\boldsymbol{p}\right)  ^{\pm}\right\rangle _{1/2,\partial\Omega
}=\left\langle \boldsymbol{v},2\mu\mathcal{M}^{\left(  \boldsymbol{n}\right)
}S\boldsymbol{p}\right\rangle _{1/2,\partial\Omega}\pm%
{\displaystyle\int_{\Omega^{\pm}}}
\boldsymbol{\Delta}V_{\kappa_{s}}\boldsymbol{\boldsymbol{p}}\cdot
\boldsymbol{v}+\boldsymbol{\nabla}V_{\kappa_{s}}\boldsymbol{p}\cdot
\boldsymbol{\nabla v}\ dx\\
\qquad\pm%
{\displaystyle\int_{\Omega^{\pm}}}
\boldsymbol{\nabla}\times V_{\kappa_{s}}\boldsymbol{p}\cdot\boldsymbol{\nabla
}\times\boldsymbol{v}+\boldsymbol{\nabla}\cdot V_{\kappa_{s}}\boldsymbol{p}%
\ \boldsymbol{\nabla}\cdot\boldsymbol{v}-\boldsymbol{\nabla}V_{\kappa_{s}%
}\boldsymbol{p}\cdot\boldsymbol{\nabla v\ }dx\\
\qquad\pm%
{\displaystyle\int_{\Omega^{\pm}}}
\boldsymbol{\nabla\boldsymbol{\nabla}}\cdot\left(  V_{\kappa_{p}}%
-V_{\kappa_{s}}\right)  \boldsymbol{\boldsymbol{p}}\cdot\boldsymbol{v}%
+\boldsymbol{\boldsymbol{\nabla}}\cdot\left(  V_{\kappa_{p}}-V_{\kappa_{s}%
}\right)  \boldsymbol{\boldsymbol{p\nabla}}\cdot\boldsymbol{v\ }dx.
\end{array}
\]
Volume expression (\ref{M_vol})\ of $\mathcal{M}^{\left(  \boldsymbol{n}%
\right)  }$ and usual Green formula directly yield (\ref{traction-S}). The
jump of $T^{\left(  \boldsymbol{n}\right)  }S\boldsymbol{p}$ directly follows
from that of the normal derivative of the single-layer potential of the
Helmholtz equation. The mapping properties are obtained in the same way than
those related to the traces of the double-layer potential.
\end{proof}

\begin{remark}
Representation formula (\ref{traction-S}) establishes the duality identity
\[
\left\langle \boldsymbol{\psi},\left(  T^{\left(  \boldsymbol{n}\right)
}S\boldsymbol{p}\right)  ^{\pm}\right\rangle _{1/2,\partial\Omega
}=-\left\langle \left(  K\boldsymbol{\psi}\right)  ^{\mp},\boldsymbol{p}%
\right\rangle _{1/2,\partial\Omega},\
\]
for $\boldsymbol{\psi}\in H^{1/2}\left(  \partial\Omega;\mathbb{C}^{3}\right)
$ and $\boldsymbol{p}\in H^{-1/2}\left(  \partial\Omega;\mathbb{C}^{3}\right)
,$ from the corresponding formula for the potentials of the Helmholtz equation
without resorting to the general theory for elliptic systems \cite[p.
211]{McLean:00}.
\end{remark}

Now we address the perhaps most important issue in this paper: a suitable
regularization of the hypersingular kernels arising in the representation of
the traction of the double-layer potential. As said above, we here extend two
regularizations, devised by Le~Lou\"{e}r \cite{LeLouer:14,Lelouerv1:12} for a
geometry of class $\mathcal{C}^{2}$, to a Lipschitz domain.

The first regularization is based on formula (\ref{EW-K-II}), and can be
viewed, at some extent, as a generalization of the static elasticity case
derived by Han (cf. \cite{Han:94} and \cite[Lemma 2.3.3]{Hsiao-Wendland:08}).

\begin{theorem}
For $\boldsymbol{\psi}\in H^{1/2}\left(  \partial\Omega;\mathbb{C}^{3}\right)
$, the traction of the double-layer potential on each side of $\partial\Omega$
is given by
\begin{align}
\left(  T^{\left(  \boldsymbol{n}\right)  }K\boldsymbol{\psi}\right)  ^{\pm}
&  =\mu\left(  \left(  \partial_{\boldsymbol{n}}N_{\kappa_{s}}\boldsymbol{\psi
}\right)  +\mathcal{M}^{\left(  \boldsymbol{n}\right)  }\left(  N_{\kappa_{s}%
}\boldsymbol{\psi}\right)  ^{\pm}-\left(  \partial_{\boldsymbol{n}}%
V_{\kappa_{s}}\mathcal{M}^{\left(  \boldsymbol{n}\right)  }\boldsymbol{\psi
}\right)  ^{\pm}\right)  \nonumber\\
&  +2\mu\left(  \mathcal{M}^{\left(  \boldsymbol{n}\right)  }%
\boldsymbol{\nabla}\left(  V_{\kappa_{p}}-V_{\kappa_{s}}\right)
\boldsymbol{n}\cdot\boldsymbol{\psi}-\boldsymbol{n\nabla}\cdot\left(
V_{\kappa_{p}}-V_{\kappa_{s}}\right)  \mathcal{M}^{\left(  \boldsymbol{n}%
\right)  }\boldsymbol{\psi}\right)  \nonumber\\
&  +\mathcal{M}^{\left(  \boldsymbol{n}\right)  }\left(  3\mu V_{\kappa_{s}%
}-4\mu^{2}S\right)  \mathcal{M}^{\left(  \boldsymbol{n}\right)  }%
\boldsymbol{\psi}-\omega^{2}\varrho\boldsymbol{n}\left(  V_{\kappa_{p}%
}-V_{\kappa_{s}}\right)  \boldsymbol{n}\cdot\boldsymbol{\psi}.\label{T_Alter}%
\end{align}
In particular, $\left[  T^{\left(  \boldsymbol{n}\right)  }K\boldsymbol{\psi
}\right]  =0$ and $\left(  T^{\left(  \boldsymbol{n}\right)  }%
K\boldsymbol{\psi}\right)  ^{\pm}=$ $T^{\left(  \boldsymbol{n}\right)
}K\boldsymbol{\psi}$ defines a bounded operator from $H^{1/2+s}(\allowbreak
\partial\Omega;\mathbb{C}^{3})$ into $H^{-1/2+s}\left(  \partial
\Omega;\mathbb{C}^{3}\right)  $ for $-1/2<s<1/2$.
\end{theorem}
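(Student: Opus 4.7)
The plan is to start from the representation (\ref{EW-K-II}),
\[
K\boldsymbol{\psi} = N_{\kappa_s}\boldsymbol{\psi} + (V_{\kappa_s}-2\mu S)\mathcal{M}^{(\boldsymbol{n})}\boldsymbol{\psi} + \boldsymbol{\nabla}(V_{\kappa_p}-V_{\kappa_s})\boldsymbol{n}\cdot\boldsymbol{\psi},
\]
and to apply the traction in the form (\ref{Tn-M-II}), $T^{(\boldsymbol{n})}\boldsymbol{u} = \mu\mathcal{M}^{(\boldsymbol{n})}\boldsymbol{u} + \mu\partial_{\boldsymbol{n}}\boldsymbol{u} + (\lambda+\mu)\boldsymbol{n}\,\boldsymbol{\nabla}\cdot\boldsymbol{u}$, to each of the three summands $\boldsymbol{u}_1 := N_{\kappa_s}\boldsymbol{\psi}$, $\boldsymbol{u}_2 := (V_{\kappa_s}-2\mu S)\mathcal{M}^{(\boldsymbol{n})}\boldsymbol{\psi}$ and $\boldsymbol{u}_3 := \boldsymbol{\nabla}(V_{\kappa_p}-V_{\kappa_s})\boldsymbol{n}\cdot\boldsymbol{\psi}$ separately. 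The third summand is the smoothest: since $\boldsymbol{n}\cdot\boldsymbol{\psi}\in H^{-1/2+s}(\partial\Omega)$, the regularity gain (\ref{reg_H3}) puts $(V_{\kappa_p}-V_{\kappa_s})(\boldsymbol{n}\cdot\boldsymbol{\psi})$ in $H^{s+3}_{\text{loc}}$, so $\boldsymbol{u}_3$ has enough regularity for its traction to be computed by strong termwise differentiation, yielding the $2\mu\mathcal{M}^{(\boldsymbol{n})}\boldsymbol{\nabla}(V_{\kappa_p}-V_{\kappa_s})(\boldsymbol{n}\cdot\boldsymbol{\psi})$ contribution and, after reducing $\boldsymbol{\nabla}\cdot\boldsymbol{u}_3 = \Delta(V_{\kappa_p}-V_{\kappa_s})(\boldsymbol{n}\cdot\boldsymbol{\psi}) = -\kappa_p^2 V_{\kappa_p}(\boldsymbol{n}\cdot\boldsymbol{\psi}) + \kappa_s^2 V_{\kappa_s}(\boldsymbol{n}\cdot\boldsymbol{\psi})$ inside $\Omega^{\pm}$ via $\omega^2\varrho = \mu\kappa_s^2 = (\lambda+2\mu)\kappa_p^2$, the term $-\omega^2\varrho\,\boldsymbol{n}(V_{\kappa_p}-V_{\kappa_s})(\boldsymbol{n}\cdot\boldsymbol{\psi})$ of (\ref{T_Alter}).

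For $\boldsymbol{u}_2$, I would apply the single-layer traction identity (\ref{traction-S}) just established, with density $\mathcal{M}^{(\boldsymbol{n})}\boldsymbol{\psi}$, to the $-2\mu S$ piece, while the $V_{\kappa_s}$ piece is treated componentwise via (\ref{Tn-M-II}) as a vector of scalar Helmholtz single-layer potentials. Combining the two contributions yields $-\mu(\partial_{\boldsymbol{n}}V_{\kappa_s}\mathcal{M}^{(\boldsymbol{n})}\boldsymbol{\psi})^{\pm}$, the term $\mathcal{M}^{(\boldsymbol{n})}(3\mu V_{\kappa_s}-4\mu^2 S)\mathcal{M}^{(\boldsymbol{n})}\boldsymbol{\psi}$ (with $3\mu = \mu + 2\mu$ coming from the $\mu\mathcal{M}^{(\boldsymbol{n})}$ part of (\ref{Tn-M-II}) applied to $V_{\kappa_s}\mathcal{M}^{(\boldsymbol{n})}\boldsymbol{\psi}$ added to the $-2\mu(-\mathcal{M}^{(\boldsymbol{n})}V_{\kappa_s})$ inherited from (\ref{traction-S})), and the $-2\mu\boldsymbol{n}\,\boldsymbol{\nabla}\cdot(V_{\kappa_p}-V_{\kappa_s})\mathcal{M}^{(\boldsymbol{n})}\boldsymbol{\psi}$ contribution of (\ref{T_Alter}). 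For $\boldsymbol{u}_1$, the components of $N_{\kappa_s}\boldsymbol{\psi}$ being scalar Helmholtz double-layers, (\ref{Tn-M-II}) directly produces the $\mu\partial_{\boldsymbol{n}}N_{\kappa_s}\boldsymbol{\psi}$ (jump-free hypersingular) and $\mu\mathcal{M}^{(\boldsymbol{n})}(N_{\kappa_s}\boldsymbol{\psi})^{\pm}$ terms of (\ref{T_Alter}) plus a divergence leftover $(\lambda+\mu)\boldsymbol{n}\,\boldsymbol{\nabla}\cdot N_{\kappa_s}\boldsymbol{\psi}$ which, together with the analogous leftovers from $\boldsymbol{u}_2$ and $\boldsymbol{u}_3$, should collapse under the scalar Helmholtz equation $\Delta V_\kappa = -\kappa^2 V_\kappa$ in $\Omega^{\pm}$.

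The jump $[T^{(\boldsymbol{n})}K\boldsymbol{\psi}] = 0$ is then read off (\ref{T_Alter}) using the jumps of the Helmholtz potentials: $[\mathcal{M}^{(\boldsymbol{n})}N_{\kappa_s}\boldsymbol{\psi}] = \mathcal{M}^{(\boldsymbol{n})}[N_{\kappa_s}\boldsymbol{\psi}] = \mathcal{M}^{(\boldsymbol{n})}\boldsymbol{\psi}$ cancels $[\partial_{\boldsymbol{n}}V_{\kappa_s}\mathcal{M}^{(\boldsymbol{n})}\boldsymbol{\psi}] = \mathcal{M}^{(\boldsymbol{n})}\boldsymbol{\psi}$ in the first line, while $\partial_{\boldsymbol{n}}N_{\kappa_s}$ is jump-free and all remaining terms are either tangential operators applied to continuous traces or inherit from (\ref{reg_H3}) enough regularity to be continuous with their first derivatives across $\partial\Omega$. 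The mapping property $H^{1/2+s}(\partial\Omega;\mathbb{C}^{3})\to H^{-1/2+s}(\partial\Omega;\mathbb{C}^{3})$ follows termwise: each $\mathcal{M}^{(\boldsymbol{n})}$ costs one Sobolev order by Theorem~\ref{Gunter_map}, each Helmholtz single- or double-layer gains the usual order, and the two-order gain carried by $V_{\kappa_p}-V_{\kappa_s}$ from (\ref{reg_H3}) absorbs the two derivatives that appear in the second line of (\ref{T_Alter}). The main obstacle I anticipate is precisely the algebraic bookkeeping that forces the divergence leftovers from $\boldsymbol{u}_1,\boldsymbol{u}_2,\boldsymbol{u}_3$ to collapse onto the compact second line of (\ref{T_Alter}); this is the step where systematic use of the scalar Helmholtz equation and of the identity $\omega^2\varrho = \mu\kappa_s^2 = (\lambda+2\mu)\kappa_p^2$ linking the Lam\'e and Helmholtz parameters will be essential.
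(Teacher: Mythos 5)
Your overall strategy --- start from the representation (\ref{EW-K-II}), apply the traction in the form (\ref{Tn-M-II}) together with the already-established single-layer traction (\ref{traction-S}), and use $\Delta V_{\kappa}=-\kappa^{2}V_{\kappa}$ with $\omega^{2}\varrho=\mu\kappa_{s}^{2}=(\lambda+2\mu)\kappa_{p}^{2}$ to reduce the divergence terms --- is the right one, and your accounting of where the coefficients $3\mu$, $-4\mu^{2}$ and $-\omega^{2}\varrho$ come from is correct. However, there is a genuine gap in the way you split the potential. You propose to apply (\ref{Tn-M-II}) separately to $\boldsymbol{u}_{1}=N_{\kappa_{s}}\boldsymbol{\psi}$ and to the $V_{\kappa_{s}}\mathcal{M}^{(\boldsymbol{n})}\boldsymbol{\psi}$ part of $\boldsymbol{u}_{2}$. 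For $\boldsymbol{\psi}\in H^{1/2}(\partial\Omega;\mathbb{C}^{3})$ these potentials are only in $H^{1}_{\text{loc}}(\overline{\Omega^{\pm}};\mathbb{C}^{3})$, so $\boldsymbol{\nabla}\cdot\boldsymbol{u}_{1}$ and $\boldsymbol{\nabla}\cdot V_{\kappa_{s}}\mathcal{M}^{(\boldsymbol{n})}\boldsymbol{\psi}$ are merely $L^{2}_{\text{loc}}$; equivalently, $\boldsymbol{\Delta}^{\ast}\boldsymbol{u}_{1}\notin L^{2}_{\text{loc}}$, so $\boldsymbol{u}_{1}\notin H^{1}_{\text{loc}}(\boldsymbol{\Delta}^{\ast},\overline{\Omega^{\pm}})$ and Lemma~\ref{Lemma3} does not assign it a traction. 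In particular the ``divergence leftovers'' $(\lambda+\mu)\boldsymbol{n}\,\boldsymbol{\nabla}\cdot\boldsymbol{u}_{1}$ and $(\lambda+\mu)\boldsymbol{n}\,\boldsymbol{\nabla}\cdot V_{\kappa_{s}}\mathcal{M}^{(\boldsymbol{n})}\boldsymbol{\psi}$ have no boundary traces individually: what you call ``algebraic bookkeeping'' at the end is actually the analytical crux, because the cancellation must be performed \emph{inside} $\Omega^{\pm}$ (or at the level of the volume variational identities (\ref{traction-M1})--(\ref{traction-M2})) before any restriction to $\partial\Omega$ is taken. On a Lipschitz boundary you also cannot first argue for smooth densities via kernel manipulations and pass to the limit, since interior smoothness of the potentials does not extend up to $\partial\Omega$; the paper explicitly points out that this is what blocks Le~Lou\"{e}r's original derivation.

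The paper's proof repairs exactly this point by choosing a different grouping: it writes $K\boldsymbol{\psi}=\boldsymbol{w}_{0}+\boldsymbol{w}_{1}-2\mu S\mathcal{M}^{(\boldsymbol{n})}\boldsymbol{\psi}$ with $\boldsymbol{w}_{0}=N_{\kappa_{s}}\boldsymbol{\psi}+V_{\kappa_{s}}\mathcal{M}^{(\boldsymbol{n})}\boldsymbol{\psi}$ kept as a single block. Thanks to the identity (\ref{EW-K-half}), $\boldsymbol{w}_{0}=\boldsymbol{\nabla}V_{\kappa_{s}}\boldsymbol{n}\cdot\boldsymbol{\psi}-\boldsymbol{\nabla}\times V_{\kappa_{s}}\boldsymbol{n}\times\boldsymbol{\psi}$, whence $\boldsymbol{\nabla}\cdot\boldsymbol{w}_{0}=-\kappa_{s}^{2}V_{\kappa_{s}}\boldsymbol{n}\cdot\boldsymbol{\psi}\in H^{1}_{\text{loc}}(\overline{\Omega^{\pm}})$ and $\boldsymbol{w}_{0}\in H^{1}_{\text{loc}}(\boldsymbol{\Delta}^{\ast},\overline{\Omega^{\pm}})$, so that (\ref{traction-M2}) legitimately yields $(T^{(\boldsymbol{n})}\boldsymbol{w}_{0})^{\pm}$ in the form (\ref{Tn-M-II}) with a well-defined divergence trace $-\kappa_{s}^{2}(\lambda+\mu)\boldsymbol{n}V_{\kappa_{s}}\boldsymbol{n}\cdot\boldsymbol{\psi}$. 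The remaining two blocks are handled as you propose ($\boldsymbol{w}_{1}\in H^{2}_{\text{loc}}$ by (\ref{reg_H3}), and $-2\mu S\mathcal{M}^{(\boldsymbol{n})}\boldsymbol{\psi}$ via (\ref{traction-S})). To make your argument rigorous you should either adopt this grouping or, equivalently, carry all three divergence terms as a single volume integral in (\ref{traction-var}) and only invoke (\ref{EW-K-half}) to identify their sum with $-\kappa_{s}^{2}V_{\kappa_{s}}\boldsymbol{n}\cdot\boldsymbol{\psi}$ before integrating by parts back to the boundary.
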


\begin{proof}
The calculations follow those in \cite[Lemma 2.3]{Lelouerv1:12}. They are
however carried out here on the potentials instead on the kernels. The
approach in \cite[Lemma 2.3]{Lelouerv1:12}, more or less explicitly, requires
a smooth extension of the unit normal in a neighborhood of $\partial\Omega$,
which, of course, is not available for a Lipschitz geometry. The derivation is
based on the decomposition of the double-layer potential in three terms%
\[
K\boldsymbol{\psi}=\underset{\boldsymbol{w}_{0}}{\underbrace{N_{\kappa_{s}%
}\boldsymbol{\psi}+V_{\kappa_{s}}\mathcal{M}^{\left(  \boldsymbol{n}\right)
}\boldsymbol{\psi}}}+\underset{\boldsymbol{w}_{1}%
}{\underbrace{\boldsymbol{\nabla}\left(  V_{\kappa_{p}}-V_{\kappa_{s}}\right)
\boldsymbol{n}\cdot\boldsymbol{\psi}}}-2\mu S\mathcal{M}^{\left(
\boldsymbol{n}\right)  }\boldsymbol{\psi}.
\]
The last term is just a multiple of the single-layer potential created by the
density $\mathcal{M}^{\left(  \boldsymbol{n}\right)  }\boldsymbol{\psi}$: thus
the corresponding tractions are given by (\ref{traction-S})%
\begin{align*}
\left(  T^{\left(  \boldsymbol{n}\right)  }\left(  -2\mu S\mathcal{M}^{\left(
\boldsymbol{n}\right)  }\boldsymbol{\psi}\right)  \right)  ^{\pm}  &
=-2\mu\left(  \partial_{\boldsymbol{n}}V_{\kappa_{s}}\mathcal{M}^{\left(
\boldsymbol{n}\right)  }\boldsymbol{\psi}\right)  ^{\pm}\\
& -2\mu\left(  \boldsymbol{n\nabla}\cdot\left(  V_{\kappa_{p}}-V_{\kappa_{s}%
}\right)  \mathcal{M}^{\left(  \boldsymbol{n}\right)  }\boldsymbol{\psi
}-\mathcal{M}^{\left(  \boldsymbol{n}\right)  }\left(  V_{\kappa_{p}%
}-V_{\kappa_{s}}\right)  \mathcal{M}^{\left(  \boldsymbol{n}\right)
}\boldsymbol{\psi}\right)  .
\end{align*}
The second term is in $H_{\text{loc}}^{2}\left(  \overline{\Omega^{\pm}%
};\mathbb{C}^{3}\right)  $. The corresponding traction can be calculated using
the direct definition and the fact that $\boldsymbol{\nabla}\times
\boldsymbol{w}_{1}=0$ and $\boldsymbol{\nabla}\cdot\boldsymbol{w}_{1}=\left(
-\kappa_{p}^{2}V_{\kappa_{p}}+\kappa_{s}^{2}V_{\kappa_{s}}\right)
\boldsymbol{n}\cdot\boldsymbol{\psi}$
\[
T^{\left(  \boldsymbol{n}\right)  }\boldsymbol{w}_{1}=2\mu\mathcal{M}^{\left(
\boldsymbol{n}\right)  }\boldsymbol{w}_{1}+\left(  \lambda+2\mu\right)
\left(  -\kappa_{p}^{2}V_{\kappa_{p}}+\kappa_{s}^{2}V_{\kappa_{s}}\right)
\boldsymbol{n}\cdot\boldsymbol{\psi}.
\]
For the last term, we first observe that $\boldsymbol{w}_{0}\in H_{\text{loc}%
}^{1}\left(  \overline{\Omega^{\pm}};\mathbb{C}^{3}\right)  $ and
$\boldsymbol{\Delta w}_{0}=-\kappa_{s}^{2}\boldsymbol{w}_{0}$ in $\Omega^{\pm
}$ since $\boldsymbol{w}_{0}$ is a combination of layer potentials of the
Helmholtz equation corresponding to the wavenumber $\kappa_{s}$ with
respective densities $\boldsymbol{\psi}\in H^{1/2}\left(  \partial
\Omega;\mathbb{C}^{3}\right)  $ and $\mathcal{M}^{\left(  \boldsymbol{n}%
\right)  }\boldsymbol{\psi}\in H^{-1/2}\left(  \partial\Omega;\mathbb{C}%
^{3}\right)  $. Next using (\ref{EW-K-half}), we can write%
\[
\boldsymbol{\nabla}\cdot\boldsymbol{w}_{0}=\boldsymbol{\Delta}V_{\kappa_{s}%
}\boldsymbol{n}\cdot\boldsymbol{\psi}=-\kappa_{s}^{2}V_{\kappa_{s}%
}\boldsymbol{n}\cdot\boldsymbol{\psi}\in H_{\text{loc}}^{1}\left(
\overline{\Omega^{\pm}}\right)
\]
so that by Green's formula we readily get that $\left(  T^{\left(
\boldsymbol{n}\right)  }\boldsymbol{w}_{0}\right)  ^{\pm}$ can be expressed by
(\ref{Tn-M-II}) from (\ref{traction-M2}) so arriving to%
\begin{multline*}
\left(  T^{\left(  \boldsymbol{n}\right)  }\boldsymbol{w}_{0}\right)  ^{\pm
}=\mu\mathcal{M}^{\left(  \boldsymbol{n}\right)  }\boldsymbol{w}_{0}%
+\mu\left(  \partial_{\boldsymbol{n}}N_{\kappa_{s}}\boldsymbol{\psi}\right)
|_{\partial\Omega}+\mu\left(  \partial_{\boldsymbol{n}}V_{\kappa_{s}%
}\mathcal{M}^{\left(  \boldsymbol{n}\right)  }\boldsymbol{\psi}\right)  ^{\pm
}\\
-\kappa_{s}^{2}\left(  \lambda+\mu\right)  \boldsymbol{n}V_{\kappa_{s}%
}\boldsymbol{n}\cdot\boldsymbol{\psi}.
\end{multline*}
It is enough to collect the above three terms to obtain (\ref{T_Alter}). The
rest of the proof is obtained from the jump and mapping properties of the
layer potentials of the Helmholtz equation (cf. \cite{Costabel:88-2} or
\cite[p. 202]{McLean:00}).
\end{proof}

\begin{remark}
Actually, representation formula (\ref{T_Alter}) leads to an expression of
$T^{\left(  \boldsymbol{n}\right)  }K\boldsymbol{\psi}$ where the integrals
are converging in the usual meaning, in other words with no need for Cauchy
principal values or Hadamard finite parts to be defined. This property is
provided by the fact that the term $\partial_{\boldsymbol{n}}N_{\kappa_{s}%
}\boldsymbol{\psi}$ can be represented in a variational form using Hamdi's
regularization formula \cite{Hamdi:81}%
\[
\left\langle \boldsymbol{\varphi},\partial_{\boldsymbol{n}}N_{\kappa_{s}%
}\boldsymbol{\psi}\right\rangle _{1/2,\partial\Omega}=\sum_{j=1}%
^{3}\left\langle \boldsymbol{n}\times\boldsymbol{\nabla}\varphi_{j}%
,V_{\kappa_{s}}\boldsymbol{n}\times\boldsymbol{\nabla}\psi_{j}\right\rangle
_{1/2,\partial\Omega}-\int_{\partial\Omega}\varphi_{j}\boldsymbol{n}\cdot
V_{\kappa_{s}}\left(  \psi_{j}\boldsymbol{n}\right)  ds
\]
with $\boldsymbol{\varphi}\in H^{1/2}\left(  \partial\Omega;\mathbb{C}%
^{3}\right)  $, $\psi_{j}$ and $\varphi_{j}$ being the components of
$\boldsymbol{\psi}$ and $\boldsymbol{\varphi}$ respectively (see \cite[p.
289]{McLean:00} for a comprehensive proof).
\end{remark}

\begin{remark}
Le\ Lou\"{e}r \cite[Lemma 2.3]{LeLouer:14} gave a second representation
formula for $T^{\left(  \boldsymbol{n}\right)  }K\boldsymbol{\psi}$%
\begin{equation}%
\begin{array}
[c]{l}%
T^{\left(  \boldsymbol{n}\right)  }K\boldsymbol{\psi}=\mu\boldsymbol{\nabla
}_{\partial\Omega}(V_{\kappa_{s}}\boldsymbol{\nabla}_{\partial\Omega}%
\cdot\boldsymbol{\psi}\times\boldsymbol{n})\times\boldsymbol{n}\\
\qquad+2\mu\left(  \mathcal{M}^{\left(  \boldsymbol{n}\right)  }\left(
N_{\kappa_{s}}\boldsymbol{\psi}\right)  ^{\pm}-\left(  \partial
_{\boldsymbol{n}}V_{\kappa_{s}}\mathcal{M}^{\left(  \boldsymbol{n}\right)
}\boldsymbol{\psi}\right)  ^{\pm}\right) \\
\qquad+2\mu\left(  \mathcal{M}^{\left(  \boldsymbol{n}\right)  }%
\boldsymbol{\nabla}\left(  V_{\kappa_{p}}-V_{\kappa_{s}}\right)
\boldsymbol{n}\cdot\boldsymbol{\psi}-\boldsymbol{n\nabla}\cdot\left(
V_{\kappa_{p}}-V_{\kappa_{s}}\right)  \mathcal{M}^{\left(  \boldsymbol{n}%
\right)  }\boldsymbol{\psi}\right) \\
\qquad+(4/\kappa_{s}^{2})\mathcal{M}^{\left(  \boldsymbol{n}\right)
}\boldsymbol{\nabla\nabla\cdot}\left(  V_{\kappa_{p}}-V_{\kappa_{s}}\right)
\mathcal{M}^{\left(  \boldsymbol{n}\right)  }\boldsymbol{\psi}\\
\qquad-\omega^{2}\varrho\left(  \boldsymbol{n}\times V_{\kappa_{s}}\left(
\boldsymbol{\psi}\times\boldsymbol{n}\right)  +\boldsymbol{n}V_{\kappa_{p}%
}\boldsymbol{n\cdot\psi}\right)  .
\end{array}
\label{TN_K2}%
\end{equation}
The derivation of this author can similarly be adapted to deal with a
Lipschitz geometry starting this once from representation formula
(\ref{EW-K-I}) and using variational form (\ref{traction-M1}) for the
traction. The mapping properties of the related operator result as above from
those of the layer potentials of the Helmholtz equation and of those of the
tangential vector rotational and the surface rotational given in
Corollary~\ref{tang_vect_rot}.
\end{remark}

\section{The two-dimensional case\label{section4}}

We limit ourselves here to the case where both the geometry and the mechanical
characteristics of the elastic medium are invariant to translations along the
$x_{3}$-axis. We first examine what happens to the G\"{u}nter derivatives when
applied to a function independent of the variable $x_{3}$. We next use the
relation linking the 2D and 3D Green kernels of the Helmholtz equation to
express the two-dimensional elastic wave potentials similarly as above in
$\mathbb{R}^{3}$.

\subsection{Two-dimensional G\"{u}nter derivatives}

In this part, we assume that the geometry is described as follows:
$\Omega^{\pm}=\Omega_{\bot}^{\pm}\times\left(  -\infty,+\infty\right)  $ where
$\Omega_{\bot}^{+}$ is a bounded 2D Lipschitz domain of the plane and
$\Omega_{\bot}^{-}=\mathbb{R}^{2}\setminus\overline{\Omega_{\bot}^{+}}$ is its
complement. Any vector field $\boldsymbol{u}$, depending only on the
transverse variable $(x_{1},x_{2})$, can be written as the superposition of a
plane vector field $\boldsymbol{u}_{\bot}$ and a scalar field $u_{3}$,
respectively called the plane and the anti-plane components of $\boldsymbol{u}%
$, according to the decomposition%
\[
\boldsymbol{u}\left(  x_{1},x_{2}\right)  =\boldsymbol{u}_{\bot}\left(
x_{1},x_{2}\right)  +u_{3}\left(  x_{1},x_{2}\right)  \boldsymbol{e}_{3}.
\]
Recall that $\left\{  \boldsymbol{e}_{j}\right\}  _{j=1}^{3}$ denotes the
canonical basis of the space. The unit normal $\boldsymbol{n}$ to
$\partial\Omega$ is independent of $x_{3}$, and verifies $n_{3}=0$. As a
result, we do not distinguish between $\boldsymbol{n}$ and its plane component
$\boldsymbol{n}_{\bot}$. Subscript $\bot$ is used to denote 2D analogs of 3D
symbols. Let us just mention that $\boldsymbol{\nabla}_{\bot}\times
\boldsymbol{u}_{\bot}$ and $\boldsymbol{\nabla}_{\bot}\times u_{3}$ are the
scalar curl and the vector curl and are defined by%
\[
\boldsymbol{\nabla}_{\bot}\times\boldsymbol{u}_{\bot}=\partial_{x_{1}}%
u_{2}-\partial_{x_{2}}u_{1}\text{, }\boldsymbol{\nabla}_{\bot}\times
u_{3}=\partial_{x_{2}}u_{3}\boldsymbol{e}_{1}-\partial_{x_{1}}u_{3}%
\boldsymbol{e}_{2}.
\]

Let $u$ be a function independent of $x_{3}$. We readily get that%
\[
\mathcal{M}_{i3}^{\left(  \boldsymbol{n}\right)  }u=n_{3}\partial_{x_{i}%
}u-n_{i}\partial_{x_{3}}u=0.
\]
As a result, only two G\"{u}nter derivatives are not zero
\[
\mathcal{M}_{21}^{\left(  \boldsymbol{n}\right)  }u=-\mathcal{M}_{12}^{\left(
\boldsymbol{n}\right)  }u=n_{1}\partial_{x_{2}}u-n_{2}\partial_{x_{1}%
}u=\partial_{\boldsymbol{\tau}}u
\]
with $\boldsymbol{\tau}=R_{\pi/2}\boldsymbol{n}$, $R_{\theta}$ being the
counterclockwise rotation around the $x_{3}$-axis by $\theta$. In other
words,
\[
\mathcal{M}_{21}^{\left(  \boldsymbol{n}\right)  }u=-\mathcal{M}_{12}^{\left(
\boldsymbol{n}\right)  }u=\partial_{s}u
\]
where $s$ is the curvilinear abscissa of $\partial\Omega_{\bot}$ growing in
the counterclockwise direction. The following version of
Theorem~\ref{Gunter_map} is more usual.

\begin{theorem}
Under the above general assumptions, operator $\partial_{s}$ is bounded from
$H^{s}\left(  \partial\Omega_{\bot}\right)  $ into $H^{s-1}\left(
\partial\Omega_{\bot}\right)  $ for $0\leq s\leq1$.
\end{theorem}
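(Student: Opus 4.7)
The plan is to repeat the argument of Theorem~\ref{Gunter_map} in the simpler 2D setting, since the cylindrical domain $\Omega^{\pm}=\Omega_{\bot}^{\pm}\times\mathbb{R}$ is unbounded and Theorem~\ref{Gunter_map} as stated does not apply verbatim. The three ingredients needed are a basic $H^{1}\to L^{2}$ continuity estimate, an integration by parts (anti-)symmetry formula, and duality/interpolation, exactly as in the 3D case.

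First, I would note that $\partial_{s}=n_{1}\partial_{x_{2}}-n_{2}\partial_{x_{1}}$ is a first-order tangential differential operator on $\partial\Omega_{\bot}$ with coefficients $n_{1},n_{2}\in L^{\infty}(\partial\Omega_{\bot})$. By the 2D analogue of Lemma~4.23 of~\cite{McLean:00} (or directly, using local Lipschitz charts), there exists $C>0$ such that
\[
\|\partial_{s}u\|_{L^{2}(\partial\Omega_{\bot})}\leq C\|u\|_{H^{1}(\partial\Omega_{\bot})},\quad u\in H^{1}(\partial\Omega_{\bot}).
\]
This is the 2D counterpart of estimate~(\ref{Estimate_H1}).

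Next, I would establish the anti-symmetry relation
\[
\int_{\partial\Omega_{\bot}}v\,\partial_{s}u\,ds=-\int_{\partial\Omega_{\bot}}u\,\partial_{s}v\,ds,\quad u,v\in\mathcal{C}^{1}(\overline{\Omega_{\bot}^{+}}).
\]
This is the 2D analogue of~(\ref{Int_Sym}); since $\partial\Omega_{\bot}$ is a closed curve with no boundary, it follows either from $\partial_{s}(uv)=u\partial_{s}v+v\partial_{s}u$ integrated over $\partial\Omega_{\bot}$, or, mimicking the proof of the 3D lemma, from the Green identity on the bounded Lipschitz domain $\Omega_{\bot}^{+}$ applied to $\boldsymbol{\nabla}_{\bot}\times(uv)=\boldsymbol{0}$ (recall that $\partial_{s}u=\boldsymbol{\nabla} u\cdot\boldsymbol{\tau}$ with $\boldsymbol{\tau}=R_{\pi/2}\boldsymbol{n}$).

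Finally, the theorem follows exactly as in the proof of Theorem~\ref{Gunter_map}: transposing the $H^{1}\to L^{2}$ bound via the above anti-symmetry yields a bounded extension $\partial_{s}\colon L^{2}(\partial\Omega_{\bot})\to H^{-1}(\partial\Omega_{\bot})$, and then interpolation between the end-points $s=0$ and $s=1$ (noting that Sobolev scales on the boundary of a Lipschitz planar domain form an interpolation family) delivers the claim for all $0\leq s\leq1$. The only mild obstacle is checking the interpolation identity $[L^{2},H^{1}]_{\theta}=H^{\theta}$ on the closed Lipschitz curve $\partial\Omega_{\bot}$, which is standard (see, e.g.,~\cite[Th.~B.11]{McLean:00}) and already implicitly used in Theorem~\ref{Gunter_map}.
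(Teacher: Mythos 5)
Your proposal is correct and follows essentially the same route as the paper: the paper states this two-dimensional theorem without a separate proof, presenting it as a ``version'' of Theorem~\ref{Gunter_map}, whose proof consists of exactly your three ingredients --- the tangential-derivative estimate of Lemma~4.23 in \cite{McLean:00}, the integration-by-parts (anti-)symmetry coming from $\partial_{s}u=\mathcal{M}_{21}^{\left(\boldsymbol{n}\right)}u=-\mathcal{M}_{12}^{\left(\boldsymbol{n}\right)}u$, and duality plus interpolation. Your point that the cylinder $\Omega_{\bot}^{\pm}\times\mathbb{R}$ is unbounded, so that Theorem~\ref{Gunter_map} does not apply verbatim and the argument is better rerun directly on the planar Lipschitz domain $\Omega_{\bot}^{+}$, is a legitimate tightening of what the paper leaves implicit.
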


\begin{remark}
G\"{u}nter derivative matrix $\mathcal{M}^{\left(  \boldsymbol{n}\right)  }$
reduces to an operator of a particularly simple form
\[
\mathcal{M}^{\left(  \boldsymbol{n}\right)  }\boldsymbol{u}=\mathcal{M}_{\bot
}^{\left(  \boldsymbol{n}\right)  }\boldsymbol{u}_{\bot}=R_{\pi/2}\partial
_{s}\boldsymbol{u}_{\bot}=\boldsymbol{e}_{3}\times\partial_{s}\boldsymbol{u}.
\]

\end{remark}

\subsection{Two-dimensional elastic waves layer potentials}

Noting that
\[
\mu\boldsymbol{\Delta u}+\left(  \mu+\lambda\right)  \boldsymbol{\nabla\nabla
}\cdot\boldsymbol{u}=\left[
\begin{array}
[c]{c}%
\mu\boldsymbol{\Delta}_{\bot}\boldsymbol{u}_{\bot}+\left(  \mu+\lambda\right)
\boldsymbol{\nabla}_{\bot}\boldsymbol{\nabla}_{\bot}\cdot\boldsymbol{u}_{\bot
}\\
\mu\Delta_{\bot}u_{3}%
\end{array}
\right]  ,
\]
we readily get that the plane $\boldsymbol{u}_{\bot}$ and the antiplane
$u_{3}$ components of $\boldsymbol{u}$ are uncoupled at the level of the
propagation equations.

Finally, the plane component $\left(  T^{\left(  \boldsymbol{n}\right)
}\boldsymbol{u}\right)  _{\bot}$and the antiplane $\left(  T^{\left(
\boldsymbol{n}\right)  }\boldsymbol{u}\right)  _{3}$ one of the traction,
corresponding to a field $\boldsymbol{u}$ independent of $x_{3}$, respectively
depend on the plane displacement $\boldsymbol{u}_{\bot}$ and the antiplane one
$u_{3}$ only%
\begin{align*}
\left(  T^{\left(  \boldsymbol{n}\right)  }\boldsymbol{u}\right)  _{\bot}  &
=T_{\bot}^{\left(  \boldsymbol{n}\right)  }\boldsymbol{u}_{\bot}=2\mu
\partial_{\boldsymbol{n}}\boldsymbol{u}_{\bot}+\boldsymbol{n\nabla}_{\bot
}\cdot\boldsymbol{u}_{\bot}+\boldsymbol{\tau\nabla}_{\bot}\times
\boldsymbol{u}_{\bot}\\
\left(  T^{\left(  \boldsymbol{n}\right)  }\boldsymbol{u}\right)  _{3}  &
=T_{3}^{\left(  \boldsymbol{n}\right)  }u_{3}=\mu\partial_{\boldsymbol{n}%
}u_{3},
\end{align*}

The expressions of the layer potentials and the related boundary integral
operators can thus be obtained in a simple way using the following integral
representation of the 2D fundamental solution of the Helmholtz equation%
\[
\frac{i}{4}H_{0}^{\left(  1\right)  }\left(  \kappa r\right)  =\int_{-\infty
}^{+\infty}\frac{\exp\left(  i\kappa\sqrt{r^{2}+x_{3}^{2}}\right)  }{4\pi
\sqrt{r^{2}+x_{3}^{2}}}dx_{3}\text{ for }r>0\text{,}%
\]
which is classically obtained by the change of variable $x_{3}=\sinh t$ from
the Mehler-Sonine integrals \cite[Formulae 10.9.9]{NIST:10}
\[
\frac{i}{4}H_{0}^{\left(  1\right)  }\left(  \kappa r\right)  =\frac{1}{4\pi
}\int_{-\infty}^{+\infty}\exp\left(  i\kappa r\cosh t\right)  dt.
\]
For simplicity, we avoid to distinguish by subscript $\bot$ the single-layer
and the double-layer potentials related to the Helmholtz equation in 2D%
\[
V_{\kappa}p(x_{1},x_{2})=\int_{\partial\Omega_{\bot}}\frac{i}{4}H_{0}^{\left(
1\right)  }\left(  \kappa\sqrt{\left(  x_{1}-y_{1}\right)  ^{2}+\left(
x_{2}-y_{2}\right)  ^{2}}\right)  p(y_{1},y_{2})ds_{\left(  y_{1}%
,y_{2}\right)  },
\]%
\[
N_{\kappa}\varphi(x_{1},x_{2})=-\int_{\partial\Omega_{\bot}}\frac{i}%
{4}\partial_{\boldsymbol{n}_{y_{1},y_{2}}}H_{0}^{\left(  1\right)  }\left(
\kappa\sqrt{\left(  x_{1}-y_{1}\right)  ^{2}+\left(  x_{2}-y_{2}\right)  ^{2}%
}\right)  \varphi(y_{1},y_{2})ds_{\left(  y_{1},y_{2}\right)  },
\]
leaving the context to define whether it is the 2D case or the 3D one which is considered.

Each potential or boundary integral operator related to two-dimensional
elastic waves is decomposed in its plane and antiplane parts:

\begin{itemize}
\item Single-layer potential \newline$\hspace{0.2cm}%
\begin{array}
[c]{l}%
S\boldsymbol{p}=S_{\bot}\boldsymbol{p}_{\bot}+\left(  S_{3}p_{3}\right)
\boldsymbol{e}_{3},\\
S_{\bot}\boldsymbol{p}_{\bot}=\frac{1}{\omega^{2}\varrho}\left(  \kappa
_{s}^{2}V_{\kappa_{s}}\boldsymbol{p}_{\bot}+\boldsymbol{\nabla}_{\bot
}\boldsymbol{\nabla}_{\bot}\cdot\left(  V_{\kappa_{s}}-V_{\kappa_{p}}\right)
\boldsymbol{p}_{\bot}\right)  ,\\
S_{3}p_{3}=\frac{1}{\mu}V_{\kappa_{s}}p_{3},
\end{array}
$

\item Double-layer potential\newline$\hspace{0.2cm}%
\begin{array}
[c]{l}%
K\boldsymbol{\psi}=K_{\bot}\boldsymbol{\psi}_{\bot}+\left(  K_{3}\psi
_{3}\right)  \boldsymbol{e}_{3},\\
K_{\bot}\boldsymbol{\psi}_{\bot}=N_{\kappa_{s}}\boldsymbol{\psi}_{\bot
}+\left(  V_{\kappa_{s}}-2\mu S_{\bot}\right)  \mathcal{M}_{\bot}^{\left(
\boldsymbol{n}\right)  }\boldsymbol{\psi}_{\bot}+\text{ }\boldsymbol{\nabla
}_{\bot}\left(  V_{\kappa_{p}}-V_{\kappa_{s}}\right)  \boldsymbol{n}%
\cdot\boldsymbol{\psi}_{\bot},\\
K_{3}\psi_{3}=N_{\kappa_{s}}\psi_{3},
\end{array}
$

\item Traction of the single-layer potential\newline$\hspace{0.2cm}%
\begin{array}
[c]{l}%
T^{\left(  \boldsymbol{n}\right)  }S\boldsymbol{p}=T_{\bot}^{\left(
\boldsymbol{n}\right)  }S_{\bot}\boldsymbol{p}_{\bot}+\left(  T_{3}^{\left(
\boldsymbol{n}\right)  }S_{3}p_{3}\right)  \boldsymbol{e}_{3},\\
T_{\bot}^{\left(  \boldsymbol{n}\right)  }S_{\bot}\boldsymbol{p}_{\bot
}=\left(  \partial_{\boldsymbol{n}}V_{\kappa_{s}}\boldsymbol{p}\right)  ^{\pm
}+\boldsymbol{n\nabla}_{\bot}\cdot\left(  V_{\kappa_{p}}-V_{\kappa_{s}%
}\right)  \boldsymbol{p}_{\bot}\\
\quad-\mathcal{M}_{\bot}^{\left(  \boldsymbol{n}\right)  }\left(
V_{\kappa_{s}}-2\mu S_{\bot}\right)  \boldsymbol{p}_{\bot},\\
T_{3}^{\left(  \boldsymbol{n}\right)  }S_{3}p_{3}=\left(  \partial
_{\boldsymbol{n}}V_{\kappa_{s}}p_{3}\right)  ^{\pm}.
\end{array}
$

\item Traction of the double-layer potential\newline$\hspace{0.2cm}%
\begin{array}
[c]{l}%
T^{\left(  \boldsymbol{n}\right)  }K\boldsymbol{\psi=}T_{\bot}^{\left(
\boldsymbol{n}\right)  }K_{\bot}\boldsymbol{\psi}_{\bot}+\left(
T_{3}^{\left(  \boldsymbol{n}\right)  }K_{3}\psi_{3}\right)  \boldsymbol{e}%
_{3},\\
T_{\bot}^{\left(  \boldsymbol{n}\right)  }K_{\bot}\boldsymbol{\psi}_{\bot}%
=\mu\left(  \partial_{\boldsymbol{n}}N_{\kappa_{s}}\boldsymbol{\psi}_{\bot
}+\mathcal{M}_{\bot}^{\left(  \boldsymbol{n}\right)  }\left(  N_{\kappa_{s}%
}\boldsymbol{\psi}_{\bot}\right)  ^{\pm}-\left(  \partial_{\boldsymbol{n}%
}V_{\kappa_{s}}\mathcal{M}_{\bot}^{\left(  \boldsymbol{n}\right)
}\boldsymbol{\psi}_{\bot}\right)  ^{\pm}\right) \\
\quad+2\mu\left(  \mathcal{M}_{\bot}^{\left(  \boldsymbol{n}\right)
}\boldsymbol{\nabla}_{\bot}\left(  V_{\kappa_{p}}-V_{\kappa_{s}}\right)
\boldsymbol{n}\cdot\boldsymbol{\psi}_{\bot}-\boldsymbol{n\nabla}_{\bot}%
\cdot\left(  V_{\kappa_{p}}-V_{\kappa_{s}}\right)  \mathcal{M}_{\bot}^{\left(
\boldsymbol{n}\right)  }\boldsymbol{\psi}_{\bot}\right) \\
\quad+\left(  \mathcal{M}_{\bot}^{\left(  \boldsymbol{n}\right)  }\left(  3\mu
V_{\kappa_{s}}-4\mu^{2}S_{\bot}\right)  \mathcal{M}_{\bot}^{\left(
\boldsymbol{n}\right)  }\boldsymbol{\psi}-\omega^{2}\varrho\boldsymbol{n}%
\left(  V_{\kappa_{p}}-V_{\kappa_{s}}\right)  \boldsymbol{n}\cdot
\boldsymbol{\psi}_{\bot}\right)  ,\\
T_{3}^{\left(  \boldsymbol{n}\right)  }K_{3}\psi_{3}=\mu\partial
_{\boldsymbol{n}}N_{\kappa_{s}}\psi_{3}.
\end{array}
$
\end{itemize}

\begin{remark}
Another expression for the traction of the double-layer potential
\[%
\begin{array}
[c]{l}%
T^{\left(  \boldsymbol{n}\right)  }K\boldsymbol{\psi=}T_{\bot}^{\left(
\boldsymbol{n}\right)  }K_{\bot}\boldsymbol{\psi}_{\bot}+\left(
T_{3}^{\left(  \boldsymbol{n}\right)  }K_{3}\psi_{3}\right)  \boldsymbol{e}%
_{3},\\
T_{\bot}^{\left(  \boldsymbol{n}\right)  }K_{\bot}\boldsymbol{\psi}_{\bot
}=2\mu\left(  \mathcal{M}_{\bot}^{\left(  \boldsymbol{n}\right)  }\left(
N_{\kappa_{s}}\boldsymbol{\psi}_{\bot}\right)  ^{\pm}-\left(  \partial
_{\boldsymbol{n}}V_{\kappa_{s}}\mathcal{M}_{\bot}^{\left(  \boldsymbol{n}%
\right)  }\boldsymbol{\psi}_{\bot}\right)  ^{\pm}\right) \\
\qquad\qquad+2\mu\left(  \mathcal{M}_{\bot}^{\left(  \boldsymbol{n}\right)
}\boldsymbol{\nabla}_{\bot}\left(  V_{\kappa_{p}}-V_{\kappa_{s}}\right)
\boldsymbol{n}\cdot\boldsymbol{\psi}_{\bot}-\boldsymbol{n\nabla}_{\bot}%
\cdot\left(  V_{\kappa_{p}}-V_{\kappa_{s}}\right)  \mathcal{M}_{\bot}^{\left(
\boldsymbol{n}\right)  }\boldsymbol{\psi}_{\bot}\right) \\
\qquad\qquad+(4/\kappa_{s}^{2})\mathcal{M}_{\bot}^{\left(  \boldsymbol{n}%
\right)  }\boldsymbol{\nabla}_{\bot}\boldsymbol{\nabla}_{\bot}%
\boldsymbol{\cdot}\left(  V_{\kappa_{p}}-V_{\kappa_{s}}\right)  \mathcal{M}%
_{\bot}^{\left(  \boldsymbol{n}\right)  }\boldsymbol{\psi}_{\bot}\\
\qquad\qquad-\omega^{2}\varrho\left(  \boldsymbol{\tau}V_{\kappa_{s}}\left(
\boldsymbol{\psi}_{\bot}\cdot\boldsymbol{\tau}\right)  +\boldsymbol{n}%
V_{\kappa_{p}}\boldsymbol{n\cdot\psi}_{\bot}\right)  ,\\
T_{3}^{\left(  \boldsymbol{n}\right)  }K_{3}\psi_{3}=-\mu\partial_{s}%
V_{\kappa_{s}}\partial_{s}\psi_{3}-\omega^{2}\varrho\boldsymbol{\tau\cdot
}V_{\kappa_{s}}\left(  \psi_{3}\boldsymbol{\tau}\right)  .
\end{array}
\]
can also be obtained from (\ref{TN_K2}).
\end{remark}

\bibliographystyle{elsarticle-num}
\bibliography{acompat,bendali}

\end{document}